\newcommand{\V}[1]{\mathbold{#1}}
\begin{document}

\def\COMMENT#1{}

\newtheorem{theorem}{Theorem}
\newtheorem{lemma}[theorem]{Lemma}
\newtheorem{proposition}[theorem]{Proposition}
\newtheorem{corollary}[theorem]{Corollary}
\newtheorem{conjecture}[theorem]{Conjecture}
\newtheorem{claim}{Claim}[theorem]
\newtheorem{definition}[theorem]{Definition}
\newtheorem{question}[theorem]{Question}

\newenvironment{claimproof}[1]{\par\noindent\underline{Proof:}\space#1}{\leavevmode\unskip\penalty9999 \hbox{}\nobreak\hfill\quad\hbox{$\blacksquare$}}


\def\eps{{\varepsilon}}
\newcommand{\cP}{\mathcal{P}}
\newcommand{\cT}{\mathcal{T}}
\newcommand{\cL}{\mathcal{L}}
\newcommand{\ex}{\mathbb{E}}
\newcommand{\eul}{e}
\newcommand{\pr}{\mathbb{P}}

\title[The bandwidth theorem for locally dense graphs]{The bandwidth theorem for locally dense graphs}
\author{Katherine Staden and Andrew Treglown}
\thanks{K.S.\ was supported by ERC grant~306493. A.T.\ was supported by EPSRC grant EP/M016641/1.}

\begin{abstract}
The \emph{Bandwidth theorem} of B\"ottcher, Schacht and Taraz~[{\it Proof of the bandwidth conjecture of Bollob\'as and Koml\'os,  Mathematische Annalen, 2009}]  gives a condition on the minimum degree of an $n$-vertex graph $G$
 that ensures $G$ contains every $r$-chromatic graph $H$ on $n$ vertices of bounded degree
and of bandwidth $o(n)$, thereby proving a conjecture of Bollob\'as and Koml\'os~[{\it The Blow-up Lemma, Combinatorics, Probability and Computing, 1999}].
In this paper we prove a version of the Bandwidth theorem for \emph{locally dense} graphs.
Indeed, we  prove that every locally dense $n$-vertex graph $G$ with  $\delta (G) > (1/2+o(1))n$
contains as a subgraph any given (spanning) $H$ with bounded maximum degree and sublinear bandwidth.
\end{abstract}

\date{\today}
\maketitle

\section{Introduction and results}
One of the fundamental topics in extremal graph theory is the study of minimum degree conditions that force a graph to contain a given spanning substructure. Perhaps the best known result in the area is Dirac's theorem~\cite{dirac}, which
states that any graph $G$ on $n\geq 3$ vertices with minimum degree $\delta (G) \geq n/2$ contains a Hamilton cycle. 
The P\'osa--Seymour conjecture (see~\cite{posa} and~\cite{seymour}) states that any graph $G$ on $n$ vertices with $\delta (G) \geq
rn/(r+1)$ contains the $r$th power of a Hamilton cycle. (The $r$th power of a Hamilton cycle $C$ is obtained from $C$ by adding an edge
between every pair of vertices of distance at most $r$ on $C$.)  Koml\'os, S\'ark\"ozy and Szemer\'edi~\cite{kss} proved this conjecture
 for sufficiently large graphs.

A decade ago, B\"ottcher, Schacht and Taraz~\cite{bot} proved a very general minimum degree result, the so-called \emph{Bandwidth  theorem}. 
A graph $H$ on $n$ vertices is said to have \emph{bandwidth at most $b$}, if there exists a 
labelling of the vertices of $H$ by the numbers $1, \dots ,n$ such that for every edge $ij\in E(H)$ we have $|i-j|\leq b$.
Clearly every graph $H$ has bandwidth at most $|H|-1$. 
Further, a Hamilton cycle has bandwidth $2$, and in general the $r$th power of a Hamilton cycle has bandwidth at most $2r$.
B\"ottcher, Preussmann,  Taraz and  W\"urfl~\cite{BPTW10} proved that every planar graph  $H$ on $n$ vertices with bounded maximum degree has bandwidth at most $O(n/\log n)$.
The Bandwidth theorem gives a condition on the minimum  degree of a graph $G$ on 
$n$ vertices
that ensures $G$ contains \emph{every} $r$-chromatic graph on $n$ vertices of bounded degree and of bandwidth $o(n)$.
\begin{theorem}[The Bandwidth  theorem, B\"ottcher, Schacht and Taraz~\cite{bot}]\label{bst}
Given any $r,\Delta \in \mathbb N$ and any $\gamma >0$, there exist constants $\beta >0$ and $n_0 \in \mathbb N$
such that the following holds. Suppose that $H$ is an $r$-chromatic graph on $n \geq n_0$ vertices with $\Delta (H) \leq \Delta$ 
and bandwidth at most $\beta n$. If $G$ is a graph on $n$ vertices with
$$\delta (G) \geq \left( \frac{r-1}{r}+\gamma \right)n,$$
then $G$ contains a copy of $H$. 
\end{theorem}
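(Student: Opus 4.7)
The plan is to combine Szemer\'edi's Regularity Lemma, the Hajnal--Szemer\'edi theorem, and the Blow-up Lemma of Koml\'os, S\'ark\"ozy and Szemer\'edi. First, apply the Regularity Lemma to $G$ with parameters $\eps \ll d \ll \gamma, 1/\Delta$, obtaining a regular partition $V_0, V_1, \dots, V_k$ with $|V_0| \leq \eps n$ and $|V_i| = m$ for $i \geq 1$, together with the reduced graph $R$ on $[k]$ (edges at $\eps$-regular pairs of density at least $d$). A standard averaging argument yields $\delta(R) \geq ((r-1)/r + \gamma/2)k$, so by the Hajnal--Szemer\'edi theorem $R$ contains a $K_r$-factor $\mathcal{F} = \{F_1, \dots, F_s\}$ with $s = k/r$. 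To enable the embedding, I would strengthen this to a \emph{backbone}: arrange the cliques in a cyclic order so that each consecutive pair $F_j, F_{j+1}$ is joined by a perfect matching of $\eps$-regular pairs in $R$, extracted from the minimum degree of $R$ via a Dirac-type argument on an auxiliary graph whose vertices are candidate $K_r$'s. After minor cluster swapping, each pair inside every $F_j$ and along the connecting matchings becomes super-regular.

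Next, exploit the bandwidth hypothesis to partition $H$ compatibly with the backbone. Label $V(H)$ as $1, \dots, n$ realising bandwidth at most $\beta n$ and split $[n]$ into $s$ consecutive intervals $W_1, \dots, W_s$ of sizes approximately $rm$. Since $\beta \ll 1/s$, every edge of $H$ lies inside some $W_j$ or between consecutive intervals $W_j, W_{j+1}$. Using a proper $r$-colouring of $H$ and reassigning colours only across the short boundary strips between consecutive intervals, one can map each vertex of $W_j$ to a target cluster in $F_j$ so that every edge of $H$ either stays within a single $F_j$ or runs along the connecting matching between $F_j$ and $F_{j+1}$. The exceptional set $V_0$ and small cluster imbalances are absorbed by first hand-embedding a few vertices of $H$, using $\delta(G) > n/2$ to guarantee sufficient common neighbourhoods.

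Finally, apply the Blow-up Lemma to each $F_j$ in turn: boundary vertices of $H$ near $W_{j-1}$ have their images image-restricted to candidate neighbourhoods of their already-embedded neighbours in $F_{j-1}$, and the lemma embeds the remainder of the $W_j$-piece into the super-regular blow-up on $F_j$. The chief obstacle is the second step: simultaneously producing a $K_r$-factor backbone on $R$ with suitably matched consecutive cliques, and an interval partition of $H$ whose chromatic structure threads through it so that every edge of $H$ sees only a single clique of $\mathcal{F}$ or a single connecting matching. The bandwidth hypothesis $\mathrm{bw}(H) = o(n)$ is used precisely, and only, at this point.
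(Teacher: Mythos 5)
The paper does not prove Theorem~\ref{bst}; it cites it from~\cite{bot} as background for its own Theorem~\ref{main}, so there is no in-paper proof to compare against. That said, your outline does match the architecture of the BST proof as the paper sketches it in Section~2 and in the discussion around Lemma~\ref{lemmaforH1}: Regularity Lemma, a cyclically linked $K_r$-factor backbone in the reduced graph (their version of $Z^r_\ell$), a bandwidth-respecting interval partition of $H$, and the Blow-up Lemma with target sets for boundary vertices.

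The gap is in the step you flag as ``the chief obstacle'' but then resolve with two phrases --- ``minor cluster swapping'' and ``small cluster imbalances are absorbed.'' Your argument implicitly assumes that, after permuting colours between consecutive intervals of the bandwidth order, each of the $r$ clusters inside $F_j$ receives about $|W_j|/r$ vertices of $W_j$. But $H$ need not admit a proper $r$-colouring that is approximately balanced on every window simultaneously, and where the colouring is unbalanced on a window the deficit can be a constant fraction of $m$, far beyond what ``hand-embedding a few vertices'' can absorb. Establishing that an approximately balanced homomorphism from $H$ into the backbone $Z^r_\ell$ exists --- one that sends almost all edges into the $K_r$-factor while routing a carefully controlled minority of vertices sideways along the connecting edges so as to redistribute the imbalance around the whole cycle --- is exactly the content of BST's Lemma~8, and it is genuinely delicate. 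The present paper flags this explicitly: it observes that Lemma~8 of~\cite{bot} is ``much harder to prove'' than the analogous Lemma~\ref{lemmaforH1} here, and that the locally dense setting avoids the difficulty by targeting $K_{2r}$-cliques (the structure $Z^{2r}_\ell$), into which a bounded-degree, low-bandwidth, $r$-chromatic graph \emph{always} has an almost balanced homomorphism via a clean two-phase colour-permutation argument --- a shortcut unavailable under $\delta(G)\geq((r-1)/r+\gamma)n$, which only supplies $K_r$'s in $R$. Your proposal therefore requires, but does not supply, a substitute for BST's Lemma~8; without it the balancing step is a hole, and the subsequent appeals to the Blow-up Lemma have nothing to apply to.
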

We remark that Theorem~\ref{bst} had been conjectured by  Bollob\'as and Koml\'os~\cite{komlos}.
Since the Bandwidth  theorem was proven, a number of variants of the result have been obtained including for arrangeable graphs~\cite{arrange}, degenerate graphs~\cite{lee} and in the setting of random and pseudorandom graphs~\cite{abet, bkt, hls}, as well as
for robustly expanding graphs~\cite{knox}.
Very recently a Bandwidth  theorem for approximate decompositions was proven by Condon, Kim, K\"uhn and Osthus~\cite{ckko}, whilst Glock and Joos~\cite{gj} proved a $\mu n$-bounded edge colouring extension of Theorem~\ref{bst}.
A general embedding result of B\"ottcher, Montgomery, Parczyk and Person~\cite{bmpp} also implies a bandwith theorem in the setting of randomly perturbed graphs.

For many graphs $H$, the minimum degree condition in Theorem~\ref{bst} is best-possible up to the term $\gamma n$. 
For example, suppose that $H$ is a \emph{$K_r$-factor} (that is, we seek a collection of vertex-disjoint copies of $K_r$ in $G$ that together cover
all the vertices in $G$). So $\chi(H)=r$, $\Delta(H) =r-1$ and $H$ has bandwidth $r-1$. 
Suppose that $G$ is obtained from two disjoint vertex classes $A$ and $B$ of sizes $n/r+1$ and $(r-1)n/r-1$ respectively so that $G$ contains all edges other than those with both endpoints in $A$.
 Then it is easy to see
that $G$ does not contain a $K_r$-factor, however, $\delta (G) = \left( \frac{r-1}{r} \right)n-1$. In fact, note that the famous Hajnal--Szemer\'edi theorem~\cite{hs} asserts that an $n$-vertex graph $G$ contains a $K_r$-factor provided
$r|n$ and $\delta (G)\geq \left( \frac{r-1}{r} \right)n$. Thus, this extremal example is sharp.
(Note though for many $r$-partite graphs $F$, a significantly lower minimum degree condition than that in Theorem~\ref{bst} ensures an $F$-factor, see~\cite{kuhn2}.)

As for many other problems in the area, this extremal example has the characteristic that it contains a \emph{large} independent set.
There has thus been significant interest in seeking variants of classical results in extremal graph theory, where one now forbids the host graph from containing a large independent set.
Indeed, nearly 50 years ago, Erd\H{o}s and S\'os~\cite{sos} initiated the study of the Tur\'an problem under the additional assumption of small independence number. That is, they considered the
number of edges in an $n$-vertex $K_r$-free graph with independence number $o(n)$. This topic is now known as \emph{Ramsey-Tur\'an theory} and has been extensively studied by numerous authors (see e.g. \cite{lenz,ehss,mum, sim}).
More recently, there has been interest in similar questions but where now one seeks a $K_r$-factor in an $n$-vertex graph with independence number $o(n)$ and large minimum degree (see~\cite{triangle, bms,  han}).

A stronger notion of a graph not containing a large independent set, is that of being \emph{locally dense}. More precisely, given $\rho,d >0$, we say that an $n$-vertex graph $G$ is \emph{$(\rho,d)$-dense} 
if every $X \subseteq V(G)$  satisfies $e(G[X]) \geq d\binom{|X|}{2}-\rho n^2$. 
Note that the property of being locally dense is weaker than being dense and (pseudo)random and stronger than having sublinear independence number.
Locally dense graphs have been considered in a number of previous papers. For example, there have been several papers on a question of Erd\H{o}s, Faudree, Rousseau, and Schelp~\cite{efrs}; there they considered a  variant of the notion of $(\rho,d)$-dense, and asked for the values of $\rho$ and $d$ that
guarantee a $(\rho,d)$-dense graph contains a triangle.
One can view the notion of locally dense as a parameter that ensures a graph is in some sense `random-like'. Therefore, there has been interest in determining the number of (homomorphic) copies of a fixed graph $H$ in a $(\rho,d)$-dense graph $G$, and in particular whether this count is close to the value obtained if $G$ were a random graph; the study of this topic (for graphs {and} hypergraphs) was initiated by
Kohayakawa,  Nagle,  R\"odl and Schacht~\cite{knrs}.

The aim of this paper is to prove the following locally dense version of the Bandwidth  theorem.
\begin{theorem}\label{main}
For all $\Delta \in \mathbb{N}$ and $d,\eta>0$, there exist constants $\rho,\beta,n_0>0$ such that for every $n \geq n_0$, the following holds.
Let $H$ be an $n$-vertex graph with $\Delta(H) \leq \Delta$ and bandwidth at most $\beta n$.
Then any $(\rho,d)$-dense graph $G$ on $n$ vertices with $\delta(G)\geq (1/2+\eta)n$ contains a copy of $H$.
\end{theorem}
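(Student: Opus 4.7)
The plan is to follow the three-stage strategy of B\"ottcher--Schacht--Taraz (hereafter BST), rebuilding the key structural step so that it uses local density of $G$ in place of the very high minimum degree condition they impose.

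First I would apply Szemer\'edi's regularity lemma to $G$, with constants chosen $\rho \ll \eps \ll d, \eta$, to obtain an equitable partition $V_0, V_1, \ldots, V_k$ and a reduced graph $R$ whose edges correspond to $\eps$-regular pairs of density at least $d/2$. Routine computations transfer the hypotheses: $\delta(R) \geq (1/2 + \eta/2)k$, and $R$ itself is $(\rho', d')$-dense for appropriate $\rho' \ll 1$ and $d' > 0$.

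The main step is to find a spanning backbone in $R$ that can carry $H$. Set $r := \chi(H) \leq \Delta + 1$. Since $H$ has bounded degree and bandwidth $\beta n$, the \emph{Lemma for $H$} from BST provides a proper $r$-colouring of $V(H)$ and a partition of $V(H)$ into consecutive blocks such that edges only cross between adjacent blocks, and each block may be assigned to an $r$-clique in $R$ whose consecutive cliques overlap. I therefore need to show that $R$ contains a \emph{Hamilton cycle of $r$-cliques}: a cyclic sequence $K^1, \ldots, K^k$ of copies of $K_r$ covering $V(R)$, with $K^i \cup K^{i+1}$ spanning a $K_{2r}$ for every $i$. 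This is the heart of the proof and its chief difficulty: the minimum degree $(1/2+o(1))k$ is too small to invoke Hajnal--Szemer\'edi for a $K_r$-factor when $r \geq 3$, and one must exploit local density instead. My plan is to use the absorbing method: build an absorbing set of clusters using local density (which, via a supersaturation argument, guarantees many $K_r$'s through any given edge), find an almost $K_r$-factor in the remainder via a density/matching argument in the hypergraph of $K_r$'s of $R$, absorb the small leftover, and then link the resulting cliques into a cycle via rotations that exploit the density of $R$.

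Once the Hamilton clique-cycle is in place, the final stage follows the BST template: balance cluster sizes against the block sizes of $H$ (moving boundedly many vertices using the connectivity of the backbone), make the relevant pairs super-regular, and apply the blow-up lemma to embed each block of $H$ into its assigned clique of clusters, with edges between adjacent blocks handled by the $K_{2r}$ connectivity between consecutive cliques. The main obstacle throughout is the structural result in stage two; the remaining work consists of adapting the embedding machinery to the locally dense setting.
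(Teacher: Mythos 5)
Your overall architecture --- apply the Regularity Lemma, transfer local density and the minimum degree to the reduced graph $R$, build a spanning clique-cycle backbone in $R$ by absorption, then run the embedding machinery --- matches the shape of the paper's proof, and your Stage~2 is essentially the paper's Theorem~\ref{rcycle} on powers of Hamilton cycles in locally dense graphs. However, you propose to assign each bandwidth block of $H$ to a copy of $K_r$ in the backbone and to invoke the BST Lemma for $H$ to obtain an almost balanced homomorphism, and this step has a genuine gap. A bounded-degree, low-bandwidth, $r$-chromatic graph need not admit any proper $r$-colouring with nearly equal colour classes: take $H$ to be a disjoint union of $5$-cycles with $r=3$, so that every proper $3$-colouring has colour distribution $2{:}2{:}1$. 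BST repair such imbalances by reassigning vertices to other cliques of their backbone, and that repair relies on their hypothesis $\delta(R)\geq(1-1/r+\gamma)|R|$; your $R$ only satisfies $\delta(R)\geq(1/2+\eta/2)|R|$, so their Lemma for $H$ does not apply. The paper circumvents this by taking the backbone cliques to be copies of $K_{2r}$ (in fact $K_{4r}$): with $2r$ colours one can give consecutive bandwidth intervals of $H$ \emph{disjoint} colour sets and rebalance by permuting the colours at each step, and this new balanced-homomorphism statement must be proved from scratch (Lemma~\ref{lemmaforH1}).

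There is a second gap: you do not address how the exceptional set $V_0$, of size up to $\eps n$, will be covered. BST push exceptional vertices back into the clusters using their very high minimum degree; with only $(1/2+\eta)n$ an exceptional vertex may fail to have enough neighbours in the clusters where it would need to sit. The paper instead reserves a short initial segment of the bandwidth ordering of $H$ and embeds a $2$-independent subset of it exactly onto $V_0$, using the Connecting Lemma to route that segment through $R$ (Lemmas~\ref{lemmaforH} and~\ref{connect}). This is a genuinely new structural ingredient that the locally dense setting forces, not a routine ``adaptation of the embedding machinery.''
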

In the case when $H$ corresponds to a $K_r$-factor, Theorem~\ref{main} had been proven by Reiher and Schacht (see~\cite{bms}).
Note that in the case when $H$ is connected, the minimum degree in Theorem~\ref{main} is best-possible, up to the $\eta n$ term. Indeed, if $G$ consists of two vertex-disjoint cliques each of size $n/2$ then
$G$ trivially does not contain $H$ though $G$ is locally dense and $\delta (G)=n/2-1$.

A striking feature of Theorem~\ref{main} is that, unlike Theorem~\ref{bst}, the minimum degree condition does not depend on the chromatic number of $H$.
In particular, when $\chi (H)=2$, the minimum degree condition in Theorem~\ref{main} is the same as that in Theorem~\ref{bst}. Thus, in the case of bipartite $H$, there is no benefit in adding the condition that $G$ is locally dense. However, when $\chi (H)>2$, the minimum degree in Theorem~\ref{main} is substantially reduced compared to the Bandwidth  theorem. 

It would also be extremely interesting to prove a version of Theorem~\ref{main} for graphs of sublinear independence number.
Note though that examples in~\cite{bms} show that the statement of Theorem~\ref{main} is far from true if we require that $G$ has sublinear independence number instead of the locally dense condition.
 Indeed, the minimum degree necessary for the existence of a $K_r$-factor in such a graph is at least $(\frac{r-2}{r}+o(1))n$ for every $r \geq 4$.
So these two problems are genuinely different.



The proof of Theorem~\ref{main} draws on ideas from~\cite{3partite, bot}, and our approach makes use of the Regularity--Blow-up method.
We also employ several new ideas (particularly with regard to dealing with so-called exceptional vertices). In the next section we give an overview of the proof of Theorem~\ref{main}.
In Section~\ref{sec3} we introduce some notation, as well as several fundamental properties of locally dense graphs. The Regularity and Blow-up lemmas
are presented in Section~\ref{secrl}. A key step in the proof of Theorem~\ref{main} is to show that the hypothesis of this theorem ensures $G$ contains the $r$th power of a Hamilton cycle; we prove this in Section~\ref{cyclesec}.
The proof of Theorem~\ref{main} then breaks into two main parts: the proof of two so-called Lemmas for $H$ (presented in Section~\ref{sec6}) and the Lemma for $G$ (presented in Section~\ref{sec7}). In Section~\ref{sec8} we combine all these results
to prove Theorem~\ref{main}. We give some concluding remarks in Section~\ref{sec9}.

{\bf Additional note.} Since this paper was first submitted,
 Ebsen, Maesaka,  Reiher, Schacht and Sch\"ulke~\cite{newpaper} have built on our work to generalise Theorem~\ref{main}. Indeed, they replace the minimum degree condition on $G$ with an inseparability condition.

\section{Overview of the proof of Theorem~\ref{main}}\label{sketch}
The overall strategy follows in the same spirit as the proof of the Bandwidth theorem in~\cite{bot}, though the precise details of the proofs of the key steps in the argument turn out to be quite different.
In particular, the setting of locally dense graphs both smooths over some aspects of the proof, as well
as introducing additional difficulties. 
Often in problems involving embedding a spanning structure, the most challenging aspect of the proof is dealing with so-called exceptional vertices (i.e. either trying to cover the remaining last few vertices in the host graph or those few vertices that do not fit in some general structure in the host graph). In this paper, we take a novel approach to dealing with such vertices.
Below we outline the key steps in our proof and highlight
some of the main novelties in our approach.

\smallskip

\noindent
{\bf Obtaining structure in $G$.} 
Suppose that $H$ and $G$ are as in the statement of the theorem where $\chi (H)=r$.
The first step in the proof is to apply the Regularity lemma (Lemma~\ref{reg}) to $G$ to obtain the reduced graph $R$ of $G$. The reduced graph $R$ is locally dense (with somewhat different parameters compared to $G$) and 
`inherits' the minimum degree of $G$ (i.e. $\delta (R) > (1/2+o(1))|R|$). These properties ensure that $R$ contains an almost spanning subgraph $Z^{2r}_{\ell}$ which has the following  properties:
\begin{itemize}
\item $Z^{2r}_{\ell}$ covers all but at most $2r$ of the vertices in $R$;
\item $Z^{2r}_{\ell}$ consists of $\ell$ vertex-disjoint copies $K^1,\dots, K^{\ell}$ of $K_{2r}$ (in particular $|Z^{2r}_{\ell}|=2r\ell$);
\item For each $1\leq i \leq \ell$, there are all possible edges between $K^{i}$ and $K^{i+1}$ except that we miss a perfect matching between the two. (Note here $K^{\ell+1}:=K^{1}$.)
\end{itemize}
The existence of $Z^{2r}_{\ell}$ in $R$ can be guaranteed by finding a sufficiently large power of a Hamilton cycle in $R$. This is achieved in Section~\ref{cyclesec} (see Theorem~\ref{rcycle}).
Using this, one can easily deduce that $G$ contains an \emph{almost} spanning structure $\mathcal C$ that looks like the `blow-up' of $Z^{2r}_{\ell}$. More precisely, if $V(Z^{2r}_{\ell})=\{1,\dots, 2r\ell\}$ and
$V_1,\dots, V_{2r\ell}$ are the corresponding clusters in $G$, then
\begin{itemize}
\item[(i)] $V(\mathcal C)= V_1 \cup \dots \cup V_{2r\ell}$;
\item[(ii)] $\mathcal C [V_i,V_j]$ is $\eps$-regular whenever $ij \in E(Z^{2r}_{\ell})$;
\item[(iii)] If $jk$ is an edge in one of the cliques $K^i$ then $\mathcal C [V_j,V_k]$ is superregular.
\end{itemize}
We refer to $\mathcal C$ as a \emph{cycle structure}.

Suppose that in fact $\mathcal C$ is a spanning subgraph of $G$. In this case, ideally, one would now like to take the following approach. Let $x_1,\dots, x_n$ denote the bandwidth ordering of $H$. Partition $V(H)$ into $\ell$ classes $C_1,\dots, C_{\ell}$ so that 
\begin{itemize}
\item $c_i:=|C_i|=|\cup  _{j \in V(K^i)} V_j|$ for all $1\leq i \leq \ell$;
\item $C_1$ contains the vertices $x_1,\dots, x_{c_1}$; $C_2$ contains the vertices $x_{c_1+1},\dots, x_{c_1+c_2}$ and so forth.
\end{itemize}
Then embed the vertices from $C_1$ into the clusters in $G$ corresponding to the clique $K^1$, embed the vertices from $C_2$ into the clusters in $G$ corresponding to the clique $K^2$, and so on.

At first sight this seems like a plausible strategy: since the partition of $V(H)$ respected the bandwidth ordering of $H$ (and as $H$ has small bandwidth), most edges in $H$ lie in the induced subgraphs $H[C_i]$; all remaining edges lie in the bipartite graphs $H[C_i,C_{i+1}]$. Suppose one could map each $C_i$ onto the clusters corresponding to $K^i$, so that each such cluster $V_j$ receives precisely $|V_j|$ vertices from $C_i$,
and crucially, all edges $xy$ in $H[C_i]$ are such that $x$ and $y$ are mapped to different clusters in $K^i$.
That is, suppose
 we have a graph homomorphism $\phi _i$ between $H[C_i]$ and $K^i$ that maps precisely $|V_j|$ vertices to each $V_j$.
Further, suppose the $\phi _i$ together combine to give a graph homomorphism $f$ from $H$ to $Z^{2r}_{\ell}$ (so the edges in each $H[C_i,C_{i+1}]$ are mapped to edges in $R[V(K^i),V(K^{i+1})]$).
Set $G_i:=G[\cup  _{j \in V(K^i)} V_j]$. 
Then (iii) above ensures we could  apply the Blow-up lemma to each graph $G_i$ so as to embed $H[C_i]$ into $G_i$. Further, (ii) ensures that we can achieve this embedding so all edges in the graphs $H[C_i,C_{i+1}]$ are also present. That is, we would obtain an embedding of $H$ into $G$.

This naive approach fails though as there is no guarantee one can map each $C_i$ onto the clusters corresponding to $K^i$ so that each such cluster $V_j$ receives precisely $|V_j|$ vertices from $C_i$.
Furthermore, in the above approach we assumed that $\mathcal C$ is a spanning subgraph of $G$; in reality we have a small exceptional set $V_0$ of vertices in $G$ uncovered by $\mathcal C$.

\smallskip

\noindent
{\bf The Basic Lemma for $H$ and the Lemma for $G$.} 
Instead of the above, we prove the so-called Basic~Lemma~for~$H$ (Lemma~\ref{lemmaforH1}). Here we show that one can find a graph homomorphism $f$ from $H$ into $Z^{2r}_{\ell}$ so that for every cluster $V_i$ of $R$, \emph{approximately} $|V_i|$ vertices are mapped to it.
This therefore `almost' gives us the desired graph homomorphism $f$ from $H$ into $Z^{2r}_{\ell}$. 
In the proof of Lemma~\ref{lemmaforH1} we rely on the fact that the $K^i$ in $Z^{2r}_{\ell}$ are copies of $K_{2r}$; note that in the analogous structure in the proof of the  Bandwidth theorem~\cite{bot}, the $K^i$ are copies of $K_r$.
To see why our  condition is helpful for us, note that whilst in general an $r$-partite graph $H'$ does not have an `almost balanced' graph homomorphism into $K_r$ (since $H'$ may have colour classes of wildly different sizes), for  $r$-partite graphs $H'$ of bounded degree and sublinear bandwidth
one can 
always find an almost balanced graph homomorphism from $H'$ into $K_{2r}$.

Next, in the Lemma~for~$G$ (Lemma~\ref{lemmaforG}) we prove that, if one does not have an  exceptional set $V_0$, then we can move vertices around  the cycle structure $\mathcal C$ in such a way to ensure  that now each cluster $V_i$ in $\mathcal C$ has size precisely corresponding 
to the number of vertices mapped to $V_i$ by $f$. This is at the expense of weakening condition (iii): after applying Lemma~\ref{lemmaforG} we only have that each clique $K^i$ splits into two cliques $K^i _1$ and $K^i _2$ of size $r$ such that 
if $jk$ is an edge in one of the cliques $K_1^i$ or $K_2 ^i$ then $\mathcal C [V_j,V_k]$ is superregular.
However, this is still good enough to apply the Blow-up lemma to find our desired embedding of $H$ into $G$.

\smallskip

\noindent
{\bf Special Lemma for $H$.} So far we have been assuming that there is no exceptional set $V_0$; further, in the the proof of the  Bandwidth theorem~\cite{bot}, B\"ottcher, Schacht and Taraz were able to utilise the large minimum degree to incorporate exceptional vertices into (their analogue of the cycle structure) $\mathcal C$. We have significantly smaller minimum degree, so are unable to do this in our setting.

Instead, given the bandwidth ordering $x_1,\dots, x_n$ of $H$, we reserve a short initial segment $x_1,\dots, x_t$; and let $H'$ denote the subgraph of $H$ induced by $x_1,\dots, x_t$. 
Here $t$ will be significantly bigger than $\beta n$ (recall $H$ has bandwidth at most $\beta n$), but still $H'$ will only be a small fraction of $H$.
Via the Special Lemma for $H$ (Lemma~\ref{lemmaforH}) we will embed $H'$ into $G$ in such a way that crucially all of $V_0$ is covered by $H'$, and equally importantly, we do not cover more than a small proportion of each cluster $V_i$ in $\mathcal C$.

In the proof of Lemma~\ref{lemmaforH},
since $V_0$ may only contain very few (or even no edges) we must assign an independent set $I$ in $H'$ of size $|V_0|$ to be embedded onto $V_0$.
We then must connect up $I$ through the rest of $G$ to obtain a copy of $H'$. In particular, since $H'$ is much smaller than $H$, the distance between two vertices $x, y \in I$ in $H'$ may also be small. So it is crucial that $G$ is `highly connected'.
The Connecting lemma (Lemma~\ref{connect}) ensures this is the case. (Lemma~\ref{connect} is also applied in the proof of Theorem~\ref{rcycle}.)

Care is also needed to ensure that Lemma~\ref{lemmaforH} is compatible with the Basic~Lemma~for~$H$ (Lemma~\ref{lemmaforH1}). That is, we use Lemma~\ref{lemmaforH} to embed $H'$ in $G$ and Lemma~\ref{lemmaforH1} to embed $H\setminus H'$ in $G$. Thus, we need to ensure the copies of $H'$ and $H\setminus H'$ can be positioned in $G$ in such a way that they `glue' together  to form a copy of $H$.


\smallskip

Note that the reader should view the above overview as an idealisation of the proof.
Indeed, when we prove Theorem~\ref{main} in Section~\ref{sec8}, some of the details will be a little different. For example, for technical reasons it is in fact important that we find a spanning copy of $Z^{r^*}_{\ell}$ in $R$ for some $r^* \gg r$ rather than $Z^{2r}_{\ell}$.

\section{Preliminaries}\label{sec3}

\subsection{Notation}
Given a set $X$ and $k \leq |X|$, write $\binom{X}{k}$ for the set of $k$-element subsets of $X$. Given $r \in \mathbb{N}$, write $[[2r]]^2 := [r]^2 \cup ([2r]\setminus[r])^2$.
Given a function $f : X \rightarrow Y$ and $A \subseteq X$, we write $f|_A$ for the restriction of $f$ to $A$ and $f(A) := \lbrace f(a) : a \in A\rbrace$.

Given a graph $G$, we write $V(G)$ and $E(G)$ for the  vertex and edge sets respectively, and $|G| := |V(G)|$ and $e(G) := |E(G)|$.
The \emph{degree} of a vertex $x \in V(G)$ is denoted by $d_G(x)$ and its neighbourhood  by $N_G(x)$. The \emph{degree} of a subset $X \subseteq V(G)$ is $d_G(X) := |\bigcap_{x \in X}N_G(x)|$.
A subgraph $H \subseteq G$ is \emph{$s$-extendable} if $d_G(V(H)) \geq s$.
Given  vertices $x_1,\dots, x_k$ we write $N_G(x_1,\dots,x_k):=\bigcap_{1\leq i\leq k}N_G(x_i)$.
If $A \subseteq V(G)$ we write $N_G(x,A):=N_G(x) \cap A$ and $d_G(x,A) := |N_G(x) \cap A|$.
We say that $A$ is \emph{$k$-independent} if every vertex in $A$ is at distance at least $k+1$ in $G$, i.e.~the shortest path in $G$ between any pair of elements in $A$ has length at least $k+1$.
Given $X,Y \subseteq V(G)$ (not necessarily disjoint), define $e_G(X,Y)$ to be the number of edges $xy \in E(G)$ with $x \in X$ and $y \in Y$.
If $X$ and $Y$ are disjoint then $G[X,Y]$ is the bipartite graph with vertex classes $X$ and $Y$ whose edge set consists of all those edges in $G$ with one endpoint in $X$, the other in $Y$.

Given two graphs $G$ and $H$, we say that $f : V(H) \rightarrow V(G)$ is a \emph{graph homomorphism} if $f(x)f(y) \in E(G)$ whenever $xy \in E(H)$.
If $f$ is additionally injective, we say that $f$ is an \emph{embedding (of $H$ into $G$)}. Then $H \subseteq G$.

\smallskip

Throughout the paper we will ignore floors and ceilings wherever they do not affect the argument.
The constants in the hierarchies used to state our results are chosen from right to left.
For example, if we claim that a result holds whenever $0<1/n\ll a\ll b\ll c\le 1$ (where $n$ is the order of the graph), then 
there are non-decreasing functions $f:(0,1]\to (0,1]$, $g:(0,1]\to (0,1]$ and $h:(0,1]\to (0,1]$ such that the result holds
for all $0<a,b,c\le 1$ and all $n\in \mathbb{N}$ with $b\le f(c)$, $a\le g(b)$ and $1/n\le h(a)$. 
Note that $a \ll b$ implies that we may assume in the proof that e.g. $a < b$ or $a < b^2$.

Given numbers  $a,b,c$, we write $a=b\pm c$ to mean $a \in [b-c,b+c]$.

\subsubsection{Named graphs}\label{notation}
Given a graph $H$, the graph $H^r$, called the \emph{$r$th power of $H$}, is obtained from $H$ by adding an edge between every pair of vertices of distance at most $r$ in $H$.
In particular:
\begin{itemize}
\item $P^r_k = P = v_1\ldots v_k$ is an \emph{$r$-path} if $V(P) = \lbrace v_1,\ldots,v_k\rbrace$ and $E(P) = \bigcup_{j \in [r]}\lbrace v_iv_{i+j} : 1 \leq i \leq k-j\rbrace$; and
\item $C^r_k = C = w_1\ldots w_k$ is an \emph{$r$-cycle} if $V(C) = \lbrace w_1,\ldots,w_k\rbrace$ and 
$E(C) = \bigcup_{j \in [r]} \lbrace w_iw_{i+j} : 1 \leq i \leq k\rbrace$, where addition is modulo $k$.
\end{itemize}
Additionally,
\begin{itemize}
 \item $F$ is an \emph{$r$-trail (of length $s$)} if there exists an ordered sequence of not necessarily distinct vertices $v_1,\ldots,v_s$ such that $V(F) = \lbrace v_1,\ldots,v_s\rbrace$ and $E(F) = \bigcup_{j \in [r]}\lbrace v_iv_{i+j} : 1 \leq i \leq s-j\rbrace$.
Observe that $P^r_k$ is an $r$-trail, and $F \cong P^r_s$ if and only if $|F|=s$. 

\item A \emph{$K$-tiling} is a collection of vertex disjoint copies of $K$. If it contains $t$ copies, we denote it by $t\cdot K$. If $H \subseteq G$ is a  $K$-tiling which is also spanning, we say that $H$ is a \emph{$K$-factor} of $G$.
\end{itemize}
Define
\begin{itemize}
\item  $Z^r_\ell$ to be the graph with vertex set $[\ell]\times[r]$ in which $(i,j)(i',j')$ is an edge whenever (i) $|i-i'| \leq 1$ and $j \neq j'$ and when (ii) $i=\ell$, $i'=1$ and $j \neq j'$.
\end{itemize}
Thus, $Z^r _\ell$ is obtained from a cycle on $\ell$ vertices by replacing each vertex with a clique on $r$ vertices and replacing every edge with a complete bipartite graph minus a certain perfect matching.
As indicated in Section~\ref{sketch}, $Z^{2r}_\ell$  will be used in the proof of Theorem~\ref{main} as a framework for embedding (most of) $H$ into $G$.
 Note that B\"ottcher, Schacht and Taraz~\cite{bot} used a very similar structure in their proof of the Bandwidth theorem.

\smallskip

Observe that
\begin{equation}\label{CZ}
2\ell\cdot K_r \subseteq C^{r-1}_{2r\ell} \subseteq Z^r_{2\ell} \subseteq C^{2r-1}_{2r\ell} \subseteq Z^{2r}_{\ell},
\end{equation}
and the lexicographic ordering of $V(Z^{r}_\ell)$ (i.e.~$(1,1)(1,2),\ldots,(1,r),(2,1),\ldots,(\ell,r)$) is an $(r-1)$-cycle ordering of $C^{r-1}_{r\ell}$.\COMMENT{AT: is there a better way to express this last statement?}

\medskip
Given $A,B \subseteq V(G)$ and $x_1,\ldots,x_k \in V(G)$, when we say that e.g. $ABx_1\ldots x_k$ is an $r$-path (respectively $r$-trail, $r$-cycle), we mean that any ordering $a_1,\ldots,a_{|A|}$ of $A$ and any ordering $b_1,\ldots,b_{|B|}$ of $B$ are such that $a_1\ldots a_{|A|}b_1\ldots b_{|B|} x_1 \ldots x_k$ is an $r$-path (respectively $r$-trail, $r$-cycle). An $r$-path (respectively $r$-trail, $r$-cycle), $Ax_1\ldots x_kB$ or $x_1\ldots x_kAB$ is defined analogously.

Suppose $X$ and $Y$ are disjoint sets of vertices of size $r$.
We say that an $r$-path $P$ is \emph{between} $X$ and $Y$ if $P = Xx_1\ldots x_kY$ for some vertices $x_1,\ldots,x_k$.
Observe that $P[X],P[Y] \cong K_r$.
Further, $P$ \emph{avoids} a set $W \subseteq V(G)$ if $V(P)\cap W = \emptyset$.


\subsection{Properties of locally dense graphs}

In this section we prove some simple properties of locally dense graphs $G$: that $G$ induced on a large vertex subset is still locally dense; after removing a small set of vertices, $G$ is still locally dense; and $G$ contains many copies of cliques of a fixed size which additionally have a large common neighbourhood.

A fact that we shall use often throughout the paper is that if $0<\rho<\rho'$ and $0<d'<d$, then a $(\rho,d)$-dense graph is also $(\rho',d')$-dense.

\begin{lemma}\label{nbrhd}
Let $r,n \in \mathbb{N}$ and $0 < 1/n \ll \rho \ll d,1/r$, and $0 < d, \alpha < 1$.
 Let $G$ be a $(\rho,d)$-dense graph on $n$ vertices and let $U \subseteq V(G)$ where $|U|=\alpha n$.
Then
\begin{itemize}
\item[(i)] $G[U]$ is $(\rho/\alpha^2,d)$-dense;
\item[(ii)] $G\setminus U$ is $(\rho/(1-\alpha)^2,d)$-dense;
\item[(iii)] $G$ contains at least $dn/2$ vertices of degree at least $dn/2$;
\item[(iv)] $G$ contains at least $(d/2)^{\binom{r+1}{2}}n^r/r!$ copies of $K_r$, each of which is $d^rn/2^r$-extendable.
\end{itemize}
\end{lemma}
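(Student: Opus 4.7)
The plan is to handle the four parts in order, with (iii) and (iv) forming the main content.

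For parts (i) and (ii), I would simply rewrite $n$ in terms of $|U|$ or $|V(G)\setminus U|$. Specifically, for any $X \subseteq U$, the $(\rho,d)$-density of $G$ applied to $X \subseteq V(G)$ gives $e(G[X]) \geq d\binom{|X|}{2} - \rho n^2$, and substituting $n = |U|/\alpha$ yields $e(G[X]) \geq d\binom{|X|}{2} - (\rho/\alpha^2)|U|^2$, as required. Part (ii) is identical with $1-\alpha$ in place of $\alpha$. These are essentially one-line computations.

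For part (iii), I would argue by contradiction via degree summing. The $(\rho,d)$-density applied to $X = V(G)$ gives $2e(G) \geq dn(n-1) - 2\rho n^2$. Suppose fewer than $dn/2$ vertices have degree $\geq dn/2$; call this set $S$ and bound $\sum_v d_G(v) \leq |S|(n-1) + (n-|S|)(dn/2)$. The right-hand side is increasing in $|S|$, so it is at most $(dn/2)(n-1) + (n - dn/2)(dn/2) = dn^2 - d^2n^2/4 - o(n^2)$. Comparing with the lower bound $dn^2 - 2\rho n^2 - o(n^2)$ forces $d^2/4 \leq 2\rho + o(1)$, which contradicts $\rho \ll d$ for large $n$.

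For part (iv), I would proceed by induction on $r$, where the inductive hypothesis is that $G$ contains at least $(d/2)^{\binom{r+1}{2}}n^r/r!$ copies of $K_r$ each of which is $d^rn/2^r$-extendable. The base case $r=1$ is exactly part (iii). For the inductive step, given a $K_r$-copy $K$ with $|N_G(V(K))| \geq d^rn/2^r$, set $U := N_G(V(K))$ and apply part (i) to see that $G[U]$ is $(\rho/(d^r/2^r)^2,d)$-dense (this is where we use $\rho \ll d, 1/r$, so that the inflated density parameter still satisfies the hypothesis of (iii) on $G[U]$). Then part (iii) applied to $G[U]$ produces at least $d|U|/2 \geq d^{r+1}n/2^{r+1}$ vertices $v \in U$ with $d_{G[U]}(v) \geq d|U|/2$; each such $v$ extends $K$ to a copy $K'$ of $K_{r+1}$ whose common neighbourhood in $G$ has size at least $d|U|/2 \geq d^{r+1}n/2^{r+1}$, hence $K'$ is $d^{r+1}n/2^{r+1}$-extendable. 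Counting pairs $(K,v)$ gives at least $(d/2)^{\binom{r+1}{2}}n^r/r! \cdot d^{r+1}n/2^{r+1}$ extensions, and since each $K_{r+1}$-copy is counted at most $r+1$ times (once per choice of $r$-subclique that happens to be good), dividing by $r+1$ yields the claimed bound $(d/2)^{\binom{r+2}{2}}n^{r+1}/(r+1)!$.

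The main obstacle is bookkeeping in (iv): I need to ensure that the density parameter for $G[U]$ remains small enough to apply (iii), which is handled by the hierarchy $\rho \ll d, 1/r$ (since $r$ is bounded the factor $4^r/d^{2r}$ is a constant), and I need the overcount by at most $r+1$ to absorb cleanly into the $(r+1)!$ denominator. Otherwise the argument is a fairly standard locally-dense clique-counting bootstrap.
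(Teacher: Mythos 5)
Your proof is correct and follows essentially the same approach as the paper's. Parts (i)--(iii) match the paper's argument (the paper solves a linear inequality for $|Y|$ directly rather than arguing at the boundary, but this is cosmetic). For (iv), the paper also bootstraps via (i) and (iii): it builds, by induction on $i\le r$, sets $\mathcal{T}_i$ of \emph{ordered} tuples $(x_1,\ldots,x_i)$ spanning extendable copies of $K_i$, with $|\mathcal{T}_i|\ge (d/2)^{\binom{i+1}{2}}n^i$, and only divides by $r!$ at the very end; tuples with different orderings are automatically distinct, so no overcount correction is needed at each step. You instead induct on unordered copies and divide by the overcount factor $r+1$ at each step, which is equally valid since $\binom{r+1}{2}+(r+1)=\binom{r+2}{2}$ makes the exponents telescope and the $(r+1)$ absorbs into $(r+1)!$. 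You also correctly observe that after applying (i) to $U=N_G(V(K))$ the inflated parameter $\rho\cdot 4^r/d^{2r}$ still satisfies the hierarchy needed to invoke (iii), which is exactly the point the paper handles by saying $G_{\mathbold{x}}$ is $(\sqrt{\rho},d)$-dense.
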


\begin{proof}
The proof of~(i) is clear and (ii) follows immediately from (i). 
For~(iii), let $Y := \lbrace v \in V(G) : d_G(v) \geq dn/2\rbrace$.
Then
$$
2d\binom{n}{2} - 2\rho n^2 \leq 2e(G) \leq (n-|Y|) \frac{dn}{2} + |Y| n
$$
and so
$$
|Y| \geq \frac{dn-2d-4\rho n}{2-d} \geq \frac{dn}{2},
$$
proving~(iii).

It remains to prove~(iv).
We claim that for each $i \leq r$, there is a set $\mathcal{T}_i$ of (ordered) tuples $\V x = (x_1,\ldots,x_i)$ such that $G[\lbrace x_1,\ldots,x_i \rbrace] \cong K_i$ and $d_G(\lbrace x_1,\ldots,x_i\rbrace) \geq d^in/2^i$ for all $\V x \in \mathcal{T}_i$, and $|\mathcal{T}_i| \geq (d/2)^{\binom{i+1}{2}}n^i$.
This will immediately imply (iv) as $\mathcal{T}_r$ gives rise to at least $(d/2)^{\binom{r+1}{2}}n^r/r!$ (unlabelled) copies of $K_r$ each of which is $d^rn/2^r$-extendable.

We will prove this by induction on $i$. Part (iii) implies that $G$ contains a set $\mathcal{T}_1$ of $dn/2$ copies of $K_1$ which are all $dn/2$-extendable.
Suppose that we have obtained $\mathcal{T}_{i-1}$ with the required properties for some $2 \leq i \leq r$.

Fix $\V x = (x_1,\ldots,x_{i-1}) \in \mathcal{T}_{i-1}$.
The graph $G_{\V x} := G[N_G(x_1,\ldots,x_{i-1})]$ induced by its neighbourhood contains at least $d^{i-1}n/2^{i-1}$ vertices and so (i) implies that it is $(2^{2i-2}\rho/d^{2i-2},d)$-dense and hence $(\sqrt{\rho},d)$-dense.
Now, using the fact that $1/n \ll \sqrt{\rho} \ll d,1/r$,~(iii) implies that $G_{\V x}$ contains at least $(d/2)\cdot d^{i-1}n/2^{i-1} = d^in/2^i$ vertices each of degree at least $d^in/2^i$.
Each such vertex $y$ gives rise to an $r$-tuple $\V x(y) := (x_1,\ldots,x_{i-1},y)$.
Certainly $G[\lbrace x_1,\ldots,x_{i-1},y\rbrace] \cong K_i$, and further $d_G(\lbrace x_1,\ldots,x_{i-1},y\rbrace) \geq d^in/2^i$ since $y$ has at least this many neighbours in the common neighbourhood of $\V x$.
Let $\mathcal{T}_i$ be the collection of all these tuples $\V x(y)$ formed from each $\V x \in \mathcal{T}_{i-1}$.
Observe that they have the required properties, and are all distinct, so
$$
|\mathcal{T}_i| \geq d^in/2^i \cdot |\mathcal{T}_{i-1}| \geq (d/2)^{\binom{i}{2}+i}n^i = (d/2)^{\binom{i+1}{2}}n^i.
$$
This completes the proof of the lemma.
\end{proof}

We will need a \emph{Connecting lemma} to find a short $r$-path between two `extendable' copies of $K_{r}$ in a locally dense graph $G$ with $\delta(G) > (1/2+o(1))n$.
\COMMENT{Replaced $\Omega(1)$ with $o(1)$ here to be consistent. I think now with $>$ notation this is ok}
The heart of the proof is the following lemma, which is the only part of the proof of Theorem~\ref{main} which requires $\delta(G) > (1/2+o(1))n$ (elsewhere, linear minimum degree suffices). Somewhat similar lemmas have been used elsewhere in other settings e.g.~\cite{ferb, st1}.

\begin{lemma}\label{ingredient}
Let $0 < 1/n \ll \rho \ll d,\eta, 1/r < 1$ where $n,r \in \mathbb{N}$.
Let $G$ be an $n$-vertex graph and let $U \subseteq V(G)$ be a subset of size $n' \geq \eta n/2$ such that $G[U]$ is $(\rho,d)$-dense and $d_G(x,U) \geq (1/2+\eta)n'$ for all $x \in V(G)$.
Let $X,Y,W$ be pairwise disjoint subsets of $V(G)$ such that $|X|=|Y| = \lceil 4r/\eta \rceil$ and $|W| \leq \eta n'/2$. Then there is $Z \subseteq U$ such that
\begin{itemize}
\item[(i)] $G[Z] \cong K_r$;
\item[(ii)] $Z \cap (X\cup Y \cup W)=\emptyset$;
\item[(iii)] there exist $X' \subseteq X$ and $Y' \subseteq Y$ with $|X'|=|Y'|=r$ such that $N_G(Z) \supseteq X' \cup Y'$.
\end{itemize}
\end{lemma}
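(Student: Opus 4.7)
The plan is to combine a short pigeonhole argument with the local density machinery of Lemma~\ref{nbrhd}. Write $k := \lceil 4r/\eta \rceil$. The first step is to find a large subset $U_0 \subseteq U$ whose vertices all have at least $r$ neighbours in each of $X$ and $Y$. By the degree hypothesis, $\sum_{u \in U} d_G(u, X) = \sum_{x \in X} d_G(x, U) \geq (1/2 + \eta) n' k$, and since $r/k \leq \eta/4$ a one-line averaging argument gives $|\{u \in U : d_G(u, X) \geq r\}| \geq (1/2 + 3\eta/4) n'$; the same bound holds for $Y$. Hence $U_0 := \{u \in U : d_G(u, X) \geq r,\ d_G(u, Y) \geq r\}$ satisfies $|U_0| \geq (3\eta/2) n'$. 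Removing the at most $2k + \eta n'/2 \leq \eta n'$ vertices in $W' := X \cup Y \cup W$ (valid for $n$ large) yields $U_0^* := U_0 \setminus W'$ with $|U_0^*| \geq \eta n'/2$.

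The second step is a double pigeonhole to choose $X'$ and $Y'$. Since every $u \in U_0^*$ contributes $\binom{d_G(u,X)}{r} \geq 1$ to the sum $\sum_{X' \in \binom{X}{r}} |\{u \in U_0^* : X' \subseteq N_G(u)\}|$, some $X^* \in \binom{X}{r}$ is contained in the neighbourhood of at least $|U_0^*|/\binom{k}{r}$ vertices of $U_0^*$. Applying the same argument inside that set (using $d_G(u, Y) \geq r$) produces $Y^* \in \binom{Y}{r}$ and a set $U_2^* \subseteq U_0^*$ of size at least $|U_0^*|/\binom{k}{r}^2 \geq c\, n'$, where $c = c(\eta, r) > 0$ is independent of $d$ and $\rho$. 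By construction every vertex of $U_2^*$ is adjacent to every vertex of $X^* \cup Y^*$.

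For the final step, take $Z$ to be any $K_r$ inside $G[U_2^*]$. Since $|U_2^*| \geq c\, n'$ and $G[U]$ is $(\rho, d)$-dense, Lemma~\ref{nbrhd}(i) implies that $G[U_2^*]$ is $(\rho/c^2, d)$-dense; as $c$ depends only on $\eta$ and $r$, the hierarchy lets us take $\rho$ small enough (in terms of $d, \eta, r$) that Lemma~\ref{nbrhd}(iv) still applies and produces at least $(d/2)^{\binom{r+1}{2}} |U_2^*|^r / r! > 0$ copies of $K_r$ in $G[U_2^*]$. Choosing any such $Z$ and setting $X' := X^*$, $Y' := Y^*$, the three conclusions of the lemma follow immediately: $G[Z] \cong K_r$ by construction; $Z \cap (X \cup Y \cup W) = \emptyset$ because $Z \subseteq U_0^*$; and $X' \cup Y' \subseteq N_G(Z)$ because every $z \in Z \subseteq U_2^*$ has $X^* \cup Y^*$ in its neighbourhood. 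The main technical worry is purely constant bookkeeping: the pigeonhole loss is a factor of $\binom{k}{r}^2 \approx (4r/\eta)^{2r}/(r!)^2$, so $c$ is exponentially small in $r$ and we must choose $\rho$ polynomially smaller than $c^2 \min(d, 1/r)$ — exactly what the hierarchy $\rho \ll d, \eta, 1/r$ permits.
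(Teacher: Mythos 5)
Your proof is correct and follows essentially the same strategy as the paper's: use the degree hypothesis on $X$ and $Y$ together with double counting to isolate a large set of vertices of $U$ that attach well to $X$ and $Y$, pigeonhole down to a subset with a common $r$-set of neighbours in each of $X$ and $Y$, and then invoke Lemma~\ref{nbrhd}(i),(iv) to find a clique $K_r$ inside that subset. The only cosmetic difference is the pigeonhole: the paper first finds vertices with at least $C+r$ total neighbours in $X\cup Y$ and partitions them according to the full trace $N_G(v, X\cup Y)$, losing a factor $2^{2C}$, while you pigeonhole directly over pairs $(X^*,Y^*)\in\binom{X}{r}\times\binom{Y}{r}$ and lose only $\binom{k}{r}^2$ — a marginally tighter but logically equivalent step.
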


\begin{proof}
Let $C := \lceil 4r/\eta \rceil$ and let $U' := U\setminus (X\cup Y \cup W)$.
Then
$$
e_G(U',X\cup Y) \geq (|X|+|Y|)((1/2+\eta)n' - |X|-|Y|-|W|) \geq 2C(1/2+\eta/4)n' \geq (1+\eta/2)Cn'.
$$
Let $U''$ be the collection of those vertices in $U'$ which each have at least $C+r$ neighbours in $X\cup Y$.
Then $(1+\eta/2)Cn' \leq (C+r-1)(n'-|U''|) + 2C|U''|$ and so
$$
|U''| \geq \frac{(1+\eta/2)Cn' - (C+r-1)n'}{C-r+1} \geq \frac{\eta n'}{4},
$$
where the final inequality follows from the fact that $C \geq 4r/\eta$.
There are not more than $2^{2C}$ ways a vertex can attach to $X \cup Y$, so there is $U^* \subseteq U''$ such that $N_G(v,X\cup Y)$ is identical for all $v \in U^*$ and $|U^*| \geq \eta n'/2^{2C+2}$.
Note further that, since each such $v$ has at least $C+r$ neighbours in $X \cup Y$, there is $X' \subseteq X$ and $Y' \subseteq Y$ such that $|X'|=|Y'|=r$ and $X' \cup Y' \subseteq N_G(U^*)$.
Lemma~\ref{nbrhd}(i) now implies that $G[U^*]$ is $(2^{4C+4}\rho/\eta^2,d)$-dense and hence $(\sqrt{\rho},d)$-dense.
But then by~Lemma~~\ref{nbrhd}(iv) there is $Z \subseteq U^*$ which spans a $K_{r}$.
The desired properties~(i)--(iii) are immediate.
\end{proof}
As well as being applied in the proof of the Connecting lemma below, Lemma~\ref{ingredient} is also a key tool in the proof of Theorem~\ref{rcycle} in Section~\ref{cyclesec}, which in turn is a crucial tool for the proof of Theorem~\ref{main}.

\begin{lemma}[Connecting lemma]\label{connect}
Let $0 < 1/n \ll \rho \ll d,\eta \leq 1/r$ where $r \in \mathbb{N}$ and let $G$ be a $(\rho,d)$-dense graph on $n$ vertices with $\delta(G)\geq (1/2+\eta)n$.
Let $W,X,Y$ be subsets of $V(G)$ such that $|W| \leq \eta n/4$ and $X,Y$ induce $r$-cliques in $G$, and each one either
\begin{itemize}
\item lies in a copy of $K_{\lceil 9 r/\eta\rceil}$ which is disjoint from $W$; or
\item is $\eta n$-extendable.
\end{itemize}
Then $G$ contains a copy of $P^{r}_{3r} = x_1\ldots x_{3r}$ avoiding $W$
such that $Xx_1\ldots x_{3r}$ induces a copy of $P^{r}_{4r}$, and $x_1\ldots x_{3r}Y$ induces a copy of $P^{r}_{4r}$.
\end{lemma}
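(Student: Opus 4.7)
The plan is to reduce the whole proof to a single application of Lemma~\ref{ingredient}, letting it simultaneously produce the middle clique of the desired $r$-path and hand back the two end-cliques inside its returned common neighbourhoods.

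First I will fix ``reservoir cliques'' $\tilde X, \tilde Y$ of size $\lceil 4r/\eta\rceil$ each, disjoint from one another and from $W \cup X \cup Y$, with $\tilde X \subseteq N_G(X)$ and $\tilde Y \subseteq N_G(Y)$, and with $G[\tilde X]$ and $G[\tilde Y]$ themselves cliques. In the ``clique'' case of the hypothesis, $\tilde X$ consists of spare vertices of the ambient $K_{\lceil 9r/\eta\rceil}$ containing $X$; in the ``extendable'' case, Lemma~\ref{nbrhd}(i) shows $G[N_G(X)]$ is locally dense (as $|N_G(X)| \geq \eta n$), and Lemma~\ref{nbrhd}(iv) supplies many $\lceil 4r/\eta\rceil$-cliques there, almost all of which avoid the small set $W\cup Y$. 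The set $\tilde Y$ is built symmetrically, with the additional requirement of avoiding $\tilde X$; this is accommodated because $|\tilde X|$ is a bounded constant while the pool of candidates for $\tilde Y$ is linear in $n$ (extendable case) or of size $\lceil 9r/\eta\rceil - O(r)$ (clique case).

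Next I will apply Lemma~\ref{ingredient} with $U := V(G)$, which is valid since $G$ is $(\rho,d)$-dense and $\delta(G) \geq (1/2+\eta)n$, taking $\tilde X$ and $\tilde Y$ in the roles of the lemma's ``$X$'' and ``$Y$'' and the forbidden set to be $W\cup X\cup Y$ (of size at most $\eta n/4 + 2r \leq \eta n/2$). This yields a clique $Z_2\cong K_r$ disjoint from everything previously chosen, together with subsets $Z_1 \subseteq \tilde X$ and $Z_3 \subseteq \tilde Y$ of size $r$ with $Z_1 \cup Z_3 \subseteq N_G(Z_2)$. Because $\tilde X$ and $\tilde Y$ were themselves cliques, $Z_1$ and $Z_3$ are automatically copies of $K_r$, and by construction $Z_1 \subseteq N_G(X)$ and $Z_3 \subseteq N_G(Y)$. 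Concatenating arbitrary orderings of $Z_1, Z_2, Z_3$ to form $x_1\ldots x_{3r}$, I observe that each of $X\cup Z_1$, $Z_1\cup Z_2$, $Z_2\cup Z_3$, $Z_3\cup Y$ spans a $K_{2r}$ in $G$, while any two non-consecutive blocks among $X, Z_1, Z_2, Z_3$ (and analogously among $Z_1, Z_2, Z_3, Y$) sit at positional distance strictly greater than $r$. Hence the $r$-path adjacency requirement (edges between positions $i, j$ whenever $|i-j|\leq r$) is fulfilled by within-block and consecutive-block edges alone, so $Xx_1\ldots x_{3r}$ and $x_1\ldots x_{3r}Y$ are copies of $P^r_{4r}$, and $x_1,\ldots,x_{3r}$ avoid $W$ by construction.

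The one point I expect to require extra care is the degenerate subcase in which both $X$ and $Y$ lie in the clique branch and their ambient copies of $K_{\lceil 9r/\eta\rceil}$ coincide (or overlap heavily): then the arithmetic for fitting disjoint $\tilde X$ and $\tilde Y$ inside a single clique becomes tight as $\eta$ approaches $1/r$. This admits an immediate short-circuit, however: when $X$ and $Y$ both lie in a common $K_M$ disjoint from $W$ with $M=\lceil 9r/\eta\rceil \geq 5r$, I simply take $x_1,\ldots,x_{3r}$ to be any $3r$ other vertices of $K_M$, since any $5r$ vertices of a clique support $P^r_{5r}$ under every ordering.
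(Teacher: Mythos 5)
Your proof is essentially the paper's. Both arguments manufacture reservoir cliques $\tilde X\subseteq N_G(X)$ and $\tilde Y\subseteq N_G(Y)$ of size $\lceil 4r/\eta\rceil$ (your $\tilde X,\tilde Y$ are exactly the paper's $X',Y'$, subsets of the ambient $K_C$'s), disjoint from each other and from $W\cup X\cup Y$; both then invoke Lemma~\ref{ingredient} with $U=V(G)$, the reservoir cliques in the roles of the lemma's $X,Y$, and $W\cup X\cup Y$ as the forbidden set; and both concatenate the resulting five $r$-blocks and observe that only within-block and consecutive-block adjacency is needed for $P^r_{4r}$. The single organisational difference is that the paper first reduces the extendable case to the clique case (finding a $K_C$ inside $G[N_G(X)\setminus W]$, so $X$ sits in a $K_{C+r}$ disjoint from $W$) and then proceeds uniformly, whereas you treat $X$ and $Y$ in parallel throughout.

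One remark on your last paragraph: your caution about the disjointness arithmetic is not misplaced, but your resolution is narrower than the problem. The paper asserts in passing that $C-2r-c\geq c$ (with $C=\lceil 9r/\eta\rceil$, $c=\lceil 4r/\eta\rceil$), which under the literal constraint $\eta\leq 1/r$ can indeed fail for $r\leq 2$. Your short-circuit handles the case where $X$ and $Y$ lie in one common $K_M$, but not the case where $X^*\neq Y^*$ yet $\tilde X$ (forced by tight arithmetic) lands mostly inside $Y^*$; to cover that you would also need to argue that $\tilde X$ can always be chosen to avoid $Y^*$ unless $X\subseteq Y^*$, at which point the short-circuit does apply. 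In the event, this edge case is irrelevant to the paper: both applications of Lemma~\ref{connect} take $\eta$ far below $1/r$, so $C-2r-c\geq c$ holds comfortably.
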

The Connecting lemma will ensure that the reduced graph $R$ of a graph $G$ (as in Theorem~\ref{main}) is `highly connected'. This property will be exploited when embedding a part of $H$ into $G$ so as to cover all of the exceptional set $V_0$ (specifically, we make use of Lemma~\ref{connect} in Section~\ref{special}).

\begin{proof}
Suppose that $X$ is $\eta n$-extendable.
Let $C := \lceil 9r/\eta \rceil$ and $c := \lceil 4r/\eta\rceil$, and also let $G_X := G[N_G(X)\setminus W]$. Then Lemma~\ref{nbrhd}(i)  implies that $G_X$ is a $(16\rho/(9\eta^2),d)$-dense graph on at least $3\eta n/4$ vertices.
But $4/(3\eta n) \ll 16\rho/(9\eta^2) \ll d, 1/C$, so Lemma~\ref{nbrhd}(iv) implies that $G_X$ contains a copy of $K_{C}$.
Therefore $X$ lies in a copy of $K_{C+r}$ which does not intersect $W$.

This implies that we may assume  both $X,Y$ lie in a copy of $K_{C}$ which does not intersect $W$.
Let $X^*$ be the vertex set of the $K_{C}$ containing $X$ and define $Y^*$ analogously for $Y$.
Choose $X' \subseteq X^*$ of size $c$ which is disjoint from $X$.
Since $|Y^*|-|Y|-|X|-|X'| = C - 2r - c \geq c$,
we can choose $Y' \subseteq Y^*$ of size $c$ which is disjoint from $X,Y,X'$.
Apply Lemma~\ref{ingredient} with $n,r,\eta,V(G),X',Y',X \cup Y \cup W$ playing the roles of $n,r,\eta,U,X,Y,W$ to obtain $Z \subseteq V(G)$ which induces a copy of $K_{r}$; is disjoint from $X' \cup Y'\cup X \cup Y \cup W$, and there exist $X'' \subseteq X'$ and $Y'' \subseteq Y'$ such that $|X''|=|Y''|=r$ and $X'' \cup Y'' \subseteq N_G(Z)$.
Notice that, by construction, each of $X \cup X''$, $X'' \cup Z$, $Z \cup Y''$ and $Y'' \cup Y$ induce cliques, and the overlap of each consecutive pair induces a clique of size at least $r$. Further, none of these sets intersect with $W$.
Thus $XX''ZY''Y$ induces an $r$-path. Thus there is an $r$-path with vertex set $X''\cup Z \cup Y''$ (of length $3r$) which has the required property.
\end{proof}

\section{The Regularity and Blow-up lemmas and associated tools}\label{secrl}

\subsection{Regularity}
We will apply Szemer\'edi's Regularity lemma in the proof  of Theorem~\ref{main}. For this we need the following definitions. Given a bipartite graph $G$ with vertex classes $A$ and $B$ and parameters $\eps,\delta \in (0,1)$,
\begin{itemize}
\item let $d_G(A,B) := \frac{e_G(A,B)}{|A||B|}$ be the \emph{density} of $G$; and say that $G$ is
\item \emph{$\eps$-regular} if, for every $X \subseteq A$ and $Y \subseteq B$ with $|X| \geq \eps|A|$ and $|Y| \geq \eps|B|$ we have that $|d_G(A,B)-d_G(X,Y)| \leq \eps$;
\item \emph{$(\eps,\delta)$-regular} if $G$ is $\eps$-regular and additionally $d_G(A,B) \geq \delta$; 
\item \emph{$(\eps,\delta)$-superregular} if $G$ is $(\eps,\delta)$-regular and additionally $d_G(a,B) \geq \delta|B|$ for every $a \in A$ and $d_G(b,A) \geq \delta|A|$ for every $b \in B$.%
\end{itemize} 

It will be convenient to use the degree form of the Regularity lemma; this can be derived from the standard version~\cite{reglem}.

\begin{lemma}[Degree form of the Regularity lemma]\label{reg}
For all $\eps \in (0,1)$ and $M' \in \mathbb{N}$, there exist $M,n_0\in \mathbb{N}$ such that the following holds for all graphs $G$ on $n \geq n_0$ vertices and $\delta \in (0,1)$.
There is a partition $V(G) = V_0 \cup V_1 \cup \ldots \cup V_L$ and a spanning subgraph $G' \subseteq G$ such that
\begin{itemize}
\item[(i)] $M' \leq L \leq M$;
\item[(ii)] $|V_0| \leq \eps n$;
\item[(iii)] $|V_1|=\ldots = |V_L| =: m$;
\item[(iv)] $d_{G'}(x) \geq d_G(x)-(\delta+\eps)n$ for all $x \in V(G)$;
\item[(v)] for all $i \in [L]$ the graph $G'[V_i]$ is empty;
\item[(vi)] for all $i \in [L]$ the graph $G'[V_i,V_j]$ is either empty or $(\eps,\delta)$-regular.
\end{itemize}
\end{lemma}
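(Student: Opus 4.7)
The plan is to derive the degree form from the classical Szemer\'edi Regularity lemma by deleting a controlled set of edges and relocating a small number of problematic vertices into the exceptional class. I would first apply the standard Regularity lemma to $G$ with auxiliary parameters $\eps' \ll \eps, \delta$ and a lower bound $M'' := \max\{2M', \lceil 1/\eps' \rceil\}$ on the number of clusters, producing a partition $V(G) = U_0 \cup U_1 \cup \ldots \cup U_{L'}$ with $M'' \leq L' \leq M$, $|U_0| \leq \eps' n$, equal-sized clusters of common size $m$, and at most $\eps' (L')^2$ pairs $(U_i,U_j)$ failing to be $\eps'$-regular.

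From this I would form a candidate subgraph by deleting every edge that either (a) is incident to $U_0$, (b) lies within some cluster, (c) lies in a non-$\eps'$-regular pair, or (d) lies in an $\eps'$-regular pair of density less than $\delta$. Each vertex loses at most $\eps' n$ neighbours from (a) and at most $n/M'' \leq \eps' n$ from (b), both easily absorbed into the $(\delta + \eps)n$ budget of condition (iv). All the real work lies in bounding the losses from (c) and (d) pointwise, since only averaged bounds are directly available.

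For (c) I would call a cluster \emph{irregular} if it participates in more than $\sqrt{\eps'} L'$ non-$\eps'$-regular pairs; a double-counting of (cluster, irregular-pair) incidences shows that at most $2\sqrt{\eps'}L'$ clusters are irregular, so relocating their vertices into the exceptional class enlarges it by at most $2\sqrt{\eps'}n$ vertices, and afterwards every surviving cluster contributes at most $\sqrt{\eps'} n$ per vertex to the loss from (c). For (d), the standard $\eps'$-regularity argument shows that in each $\eps'$-regular pair of density $<\delta$, at most $\eps' m$ vertices of each side have more than $(\delta+\eps')m$ neighbours on the other side; declaring a vertex \emph{heavy} if it lies in this exceptional set for more than $\sqrt{\eps'} L'$ of the low-density pairs incident to its cluster, a second double-counting argument bounds the number of heavy vertices by $O(\sqrt{\eps'})n$ in total. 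Relocating them as well leaves every remaining vertex losing at most $(\delta + 2\sqrt{\eps'})n$ neighbours from (d).

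Finally I would relabel the surviving clusters as $V_1,\ldots,V_L$ (with $L \geq M''/2 \geq M'$), take $V_0$ to be the enlarged exceptional class after a trivial rebalancing to keep the $V_i$ equal in size, and let $G'$ be the subgraph of $G$ obtained by additionally removing every edge newly incident to $V_0$. Properties (i)--(iii), (v) and (vi) are then immediate from the construction, while (iv) follows by summing the four contributions above, which total at most $(\delta + O(\sqrt{\eps'}))n \leq (\delta + \eps)n$ provided $\eps'$ was chosen small enough at the outset. The main obstacle throughout is precisely this pointwise guarantee (iv): the standard Regularity lemma only provides averaged control over bad pairs, and the two relocation steps above are exactly what upgrade this to a uniform per-vertex bound.
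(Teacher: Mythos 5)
The paper does not prove this lemma: it is stated without proof, with a remark that it can be derived from the standard Regularity lemma~\cite{reglem}, so there is no argument in the paper to compare yours against. That said, your plan is the standard derivation and its structure is sound; in particular you are right that the only real content is upgrading the averaged control over bad pairs to the pointwise degree condition~(iv), and the two double-counting/relocation steps you describe are the usual mechanism for this.

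Three details need tightening, though none is fatal. First, you take $\eps' \ll \eps, \delta$; since $M$ is produced by an application of Szemer\'edi's lemma with parameter $\eps'$, this would make $M$ depend on $\delta$, contrary to the stated quantifier order in which $M, n_0$ are fixed before $\delta$ is. Fortunately the requirement $\eps' \ll \delta$ is never used --- the loss from low-density pairs is $(\delta+O(\sqrt{\eps'}))n$ regardless of how $\eps'$ compares to $\delta$ --- so you should choose $\eps'$ as a function of $\eps$ alone. Second, for the ``trivial rebalancing'' to be trivial you need a per-cluster, not merely total, bound on heavy vertices; otherwise all $O(\sqrt{\eps'})n$ of them could sit in one cluster and rebalancing would gut every cluster. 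Your double-count does yield the stronger bound (for each fixed pair $(U_i,U_j)$ at most $\eps' m$ vertices of $U_i$ are exceptional, so each cluster contains at most $L'\eps' m / (\sqrt{\eps'} L') = \sqrt{\eps'} m$ heavy vertices), but this is what should be recorded, not only the global count. Third, since the surviving clusters shrink by an $O(\sqrt{\eps'})$ fraction, a pair kept at density only marginally above $\delta$ could fall below $\delta$ afterwards; declaring a pair ``low-density'' when its density is below $\delta + C\sqrt{\eps'}$ for a suitable constant $C$ (which only adds $O(\sqrt{\eps'})n$ to the loss in~(iv)) repairs this.
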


We call $V_1,\ldots,V_L$ the \emph{clusters} of $G$ and the vertices in $V_0$ the \emph{exceptional vertices}.
The graph $G'$ is the \emph{pure graph}.
Note that the $(\eps,\delta)$-regular pairs may have very different densities.
The \emph{reduced graph} $R$ of $G$ with parameters $\eps,\delta$ and $M'$ has vertex set $[L]$ and contains $ij$ as an edge precisely when $G'[V_i,V_j]$ is $(\eps,\delta)$-regular.

The next lemma states that the reduced graph $R$ of a locally dense graph $G$ is still locally dense (with worse parameters), and further $R$ inherits the minimum degree of $G$.

\begin{lemma}\label{inherit}
Let $0<1/n \ll  1/M' \ll \eps\ll\delta  \ll d \leq 1$; $1/M' \ll \rho \ll d$; $\delta \ll \eta$. Define $\rho ^*:= \max \{ 3 \rho, 3 \delta \}$.
Let $G$ be a $(\rho,d)$-dense graph of order $n$ with $\delta(G) \geq (1/2+\eta)n$.
Apply Lemma~\ref{reg} with parameters $\eps,\delta$ and $M'$ to obtain a pure graph $G'$ and a reduced graph $R$ of $G$ with $V(R)=[L]$.
Then $R$ is $(\rho ^*,d)$-dense with $\delta(R) \geq (1/2+\eta/2)L$.
\end{lemma}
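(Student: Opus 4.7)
The plan is to prove the two conclusions separately: they require different (though related) arguments, and in both cases the key idea is to transfer information from $G$ to $R$ by summing a local quantity over a cluster (for the minimum degree) or a union of clusters (for the local density).

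For the minimum degree bound, I would fix $i\in[L]$ and sum $d_{G'}(v)$ over $v\in V_i$. Property~(iv) of Lemma~\ref{reg} combined with $\delta(G)\geq(1/2+\eta)n$ gives a lower bound of $m(1/2+\eta-\delta-\epsilon)n$ on this sum. Since $G'[V_i]$ is empty by~(v) and $G'[V_i,V_j]$ is empty unless $ij\in E(R)$, the sum is at most $d_R(i)m^2+m|V_0|\leq d_R(i)m^2+m\epsilon n$. Dividing by $m^2$ and using $n/m\geq L$ yields $d_R(i)\geq(1/2+\eta-\delta-2\epsilon)L\geq(1/2+\eta/2)L$ because $\delta,\epsilon\ll\eta$.

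For the local density, fix $X\subseteq V(R)$ and set $V_X:=\bigcup_{i\in X}V_i$, so $|V_X|=|X|m$. The $(\rho,d)$-density of $G$ gives $e(G[V_X])\geq d\binom{|X|m}{2}-\rho n^2$. For the matching upper bound, I would split $E(G[V_X])$ into $G'$-edges and $(G\setminus G')$-edges. By (v)--(vi) of Lemma~\ref{reg}, $G'$-edges inside $V_X$ can only lie between clusters $V_i,V_j$ with $ij\in E(R[X])$, so $e(G'[V_X])\leq e(R[X])m^2$. Property~(iv) applied to each vertex gives $e((G\setminus G')[V_X])\leq(\delta+\epsilon)n\,|V_X|/2$. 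Combining, dividing by $m^2$, and using $n/m\leq(1+2\epsilon)L$ (since $Lm=n-|V_0|\geq(1-\epsilon)n$) yields
\[
e(R[X])\;\geq\;\frac{d|X|^2}{2}\;-\;\rho(1+5\epsilon)L^2\;-\;\frac{(\delta+\epsilon)(1+2\epsilon)|X|L}{2}\;-\;\frac{d|X|}{2m}.
\]
It then remains to check that these error terms fit inside $\rho^*L^2-d|X|/2$, which is exactly what we need to conclude $e(R[X])\geq d\binom{|X|}{2}-\rho^* L^2$.

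The main obstacle is not conceptual but a careful accounting of constants. After bounding $|X|\leq L$, the dominant error terms are $\rho L^2$ (from the density slack in $G$, rescaled by $(n/m)^2\approx L^2$) and $(\delta+\epsilon)L^2/2$ (from edges lost in passing to $G'$, rescaled by $n/m\approx L$), plus lower-order contributions of order $L$ or $d|X|/m$. The inequality $\rho(1+5\epsilon)+(\delta+\epsilon)(1+2\epsilon)/2\leq 2\rho+\delta\leq 3\max\{\rho,\delta\}=\rho^*$ (valid for $\epsilon$ sufficiently small) absorbs the dominant terms, and the hierarchy $1/M'\ll\epsilon\ll\delta$ together with $1/M'\ll\rho$ ensures the remaining $O(L)$ error is $o(L^2)$ and can be swallowed by any fixed positive fraction of $\rho^*L^2$ (or, equivalently, by the freely available negative term $-d|X|/2$). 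The factor $3$ rather than $2$ in the definition of $\rho^*$ is precisely what provides the requisite slack.
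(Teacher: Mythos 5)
Your proposal is correct and follows essentially the same route as the paper's proof: for the local-density part you compare $e(G[\bigcup_{i\in X}V_i])$ with $e(R[X])\cdot m^2$ using properties (iv)--(vi) of Lemma~\ref{reg}, exactly as the paper does, and for the minimum-degree part you transfer the degree bound from $G$ to $R$ through $G'$ (the paper uses a single representative vertex of $V_i$ where you sum degrees over all of $V_i$, but this is only a cosmetic difference).
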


\begin{proof}
Here~(i)--(vi) will refer to the conclusions of Lemma~\ref{reg}.
Parts~(ii) and~(iii) imply that
\begin{equation}\label{boring}
(1-\eps)n \leq mL \leq n.
\end{equation}
Let $X \subseteq [L]$ and let $Y := \bigcup_{i \in X}V_i \subseteq V(G)$. So $|Y|=m|X|$.
Then
$$
d\binom{|Y|}{2} - \rho n^2 - |Y|(\delta+\eps)n \leq e(G[Y]) - |Y|(\delta+\eps)n \stackrel{(iv)}{\leq} e(G'[Y]) \stackrel{(v)}{\leq} e(R[X])\cdot m^2
$$
and so, dividing by $m^2$,
$$
e(R[X]) \stackrel{(\ref{boring})}{\geq} d\cdot\frac{|X|^2 - \frac{|X|}{m}}{2} - \rho \left(\frac{L}{1-\eps}\right)^2 - |X|(\delta+\eps)\frac{L}{1-\eps} \geq d\binom{|X|}{2} - \rho ^* L^2,
$$
as required.

Let $i \in [L]$ and $x_i \in V_i$. Then $d_{G'}(x_i) \geq d_G(x_i) - (\delta+\eps)n \geq (1/2+\eta-\delta-\eps)n$ by~(iv).
The number of clusters $V_k$ of $G$ containing some $y \in N_{G'}(x_i)$ is therefore at least
$$
\frac{(1/2 + \eta-\delta-\eps)n - |V_0|}{m} \geq \frac{(1/2+\eta/2)n}{m} \geq (1/2+\eta/2)L.
$$
But then~(vi) implies that $i$ is adjacent to each of the vertices corresponding to these clusters in $R$. So $d_R(i) \geq (1/2+\eta/2) L$, as required. 
\end{proof}
Note that in the case when $\rho \ll \delta$ in Lemma~\ref{inherit}, $R$ only inherits the property being locally dense with a significantly worse parameter playing the role of $\rho$.
That is, now $R$ is $(3\delta, d)$-dense rather than $(\rho,d)$-dense. 

The next well-known proposition states that (super)regular pairs are robust in the sense of adding or removing a small number of vertices. This version appears as Proposition~8 in~\cite{3partite}.

\begin{proposition}\label{newsuperslice}
Let $G$ be a graph with $A,B \subseteq V(G)$ disjoint.
Suppose that $G[A,B]$ is $(\eps,\delta)$-regular and let $A',B' \subseteq V(G)$ be disjoint such that $|A \triangle A'| \leq \alpha|A'|$ and $|B \triangle B'| \leq \alpha|B'|$ for some $0 \leq \alpha < 1$.
Then $G[A',B']$ is $(\eps',\delta')$-regular, with
$$
\eps' := \eps + 6\sqrt{\alpha} \ \ \text{ and }\ \ \delta' := \delta-4\alpha.
$$
If, moreover, $G[A,B]$ is $(\eps,\delta)$-superregular and each vertex $x \in A'$ has at least $\delta'|B'|$ neighbours in $B'$ and each vertex $x \in B'$ has at least $\delta'|A'|$ neighbours in $A'$, then $G[A',B']$ is $(\eps',\delta')$-superregular with $\eps'$ and $\delta'$ as above.
\end{proposition}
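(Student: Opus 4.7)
The proof plan is the standard perturbation argument for regularity, adapted to handle both $A \to A'$ and $B \to B'$ simultaneously under the symmetric-difference bounds. First I would record the basic size comparisons that follow from the hypotheses: $|A \cap A'| \geq (1-\alpha)|A'|$ and $|A| \leq |A \cap A'|+|A \setminus A'| \leq (1+\alpha)|A'|$, so that $|A|/|A'| = 1 \pm \alpha$, and analogously for $B$, $B'$. These let me pass freely between $A$ and $A'$ (and $B$ and $B'$) at the cost of $O(\alpha)$ multiplicative losses.

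To verify the regularity condition, I would take arbitrary test sets $X \subseteq A'$ and $Y \subseteq B'$ with $|X| \geq \eps'|A'|$ and $|Y| \geq \eps'|B'|$, and restrict to $X_0 := X \cap A$ and $Y_0 := Y \cap B$. The symmetric-difference bounds give $|X_0| \geq (\eps'-\alpha)|A'| \geq \eps|A|$ (using $\eps' = \eps+6\sqrt{\alpha}$ and $\alpha<1$), and similarly $|Y_0| \geq \eps|B|$, so $(\eps,\delta)$-regularity of $G[A,B]$ yields $|d_{G[A,B]}(X_0,Y_0) - d_{G[A,B]}(A,B)| \leq \eps$. I then compare $d_{G[A',B']}(X,Y) = e_G(X,Y)/(|X||Y|)$ with $d_{G[A,B]}(X_0,Y_0)$: the numerators differ by at most the edges incident to $(X \setminus A) \cup (Y \setminus B)$, bounded by $|Y||X \setminus A| + |X||Y \setminus B| \leq \alpha(|A'||Y| + |X||B'|) \leq (2\alpha/\eps')|X||Y|$, and the denominators differ by a factor of $1 \pm O(\alpha/\eps')$; so the densities differ by $O(\alpha/\eps')=O(\sqrt{\alpha})$. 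Applying the same comparison with $(X,Y)=(A',B')$ and combining via the triangle inequality yields $|d_{G[A',B']}(X,Y) - d_{G[A',B']}(A',B')| \leq \eps + O(\sqrt{\alpha})$, and careful tracking of constants delivers the advertised $\eps' = \eps+6\sqrt{\alpha}$.

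For the density lower bound I would argue separately (without the $\eps$-loss), using $e_G(A \cap A', B \cap B') \geq e_G(A,B) - |A||B\setminus B'| - |A\setminus A'||B|\geq d|A||B| - 2\alpha(1+\alpha)|A'||B'|$, and hence $d_{G[A',B']}(A',B') \geq d \cdot |A||B|/(|A'||B'|) - O(\alpha) \geq d-4\alpha$; a careful audit of these constants (and using $\alpha<1$) produces exactly the stated $\delta' = \delta - 4\alpha$. The superregularity statement is then immediate: its two extra requirements---that every vertex of $A'$ (resp.\ $B'$) has at least $\delta'|B'|$ (resp.\ $\delta'|A'|$) neighbours in the other side---are exactly what is assumed in the hypothesis. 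The main obstacle in the proof is not conceptual but a matter of precise bookkeeping: the published constants $6\sqrt{\alpha}$ and $4\alpha$ must be recovered by a tight accounting of the $O(\alpha/\eps')$ errors together with the $|A|/|A'|, |B|/|B'| = 1\pm \alpha$ comparisons, and I would use the inequality $\eps' \geq 6\sqrt{\alpha}$ only at the very end, to absorb the accumulated $O(\alpha/\eps')$ losses into a clean $O(\sqrt{\alpha})$ slack.
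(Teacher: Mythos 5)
The paper does not prove this proposition; it states it and cites it verbatim as Proposition~8 of~\cite{3partite}, so there is no in-paper argument to compare against. Your outline is the standard perturbation argument for robustness of ($\eps$-)regularity under small vertex changes, and the overall strategy is sound: pass from $A$, $B$ to $A'$, $B'$ via $|A|/|A'|, |B|/|B'| = 1\pm\alpha$; restrict test sets $X\subseteq A'$, $Y\subseteq B'$ to $X_0 = X\cap A$, $Y_0 = Y\cap B$; check $|X_0| \ge \eps|A|$, $|Y_0| \ge \eps|B|$; apply $\eps$-regularity of $G[A,B]$ to $(X_0,Y_0)$; and compare $d_{G[A',B']}(X,Y)$ with $d_{G[A,B]}(X_0,Y_0)$ by direct edge counting. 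The superregularity part is indeed immediate from the extra hypotheses once regularity is in hand, and the density bound $d_{G[A',B']}(A',B')\ge\delta-4\alpha$ is a straightforward edge count as you indicate.

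One spot you should tighten in the regularity calculation. Your stated chain goes $d_{G[A',B']}(X,Y)\to d_{G[A,B]}(X_0,Y_0)\to d_{G[A,B]}(A,B)$ (costs $O(\sqrt{\alpha})$ then $\eps$), and then ``applying the same comparison with $(X,Y)=(A',B')$'' relates $d_{G[A',B']}(A',B')$ to $d_{G[A,B]}(A\cap A',B\cap B')$. That leaves an unbridged step $d_{G[A,B]}(A\cap A',B\cap B') \leftrightarrow d_{G[A,B]}(A,B)$. If you cross that bridge using $\eps$-regularity (both are dense subsets of $A$, $B$) you accumulate a second $\eps$ and end up with $2\eps+O(\sqrt{\alpha})$, which overshoots the target $\eps+6\sqrt{\alpha}$. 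The fix is to cross it by direct edge counting --- exactly the computation you already perform for the density lower bound --- giving an $O(\alpha)$ error rather than an $\eps$ error. Your proposal implicitly contains all the ingredients for this, but as written the triangle inequality could slip into the double-$\eps$ trap, so you should make the estimate $|d_{G[A,B]}(A\cap A',B\cap B')-d_{G[A,B]}(A,B)|=O(\alpha)$ explicit and keep regularity for exactly one link in the chain.
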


The following lemma is well known in several variations.
The version here 
follows immediately from \cite[Lemma 4.6]{starxiv}.
\begin{lemma}\label{superslice}
Let $L \in \mathbb{N}$ and suppose that $0 < 1/m  \ll \eps \ll \delta, 1/\Delta, 1/L \leq 1$.
Let $R$ be a graph with $V(R) = [L]$ and $\Delta (R)\leq \Delta$.
Let $G$ be a graph with vertex partition $V_1, \ldots, V_{L}$ such that $|V_i|=m$ for all $1 \leq i \leq L$, and in which $G[V_i,V_j]$ is $(\eps,\delta)$-regular whenever $ij \in E(R)$.
Then for each $i \in V(R)$, $V_i$ contains a subset $V_i'$ of size $(1-\sqrt{\eps})m$ such that for every edge $ij$ of $R$, the graph $G[V_i',V_j']$ is $(4\sqrt{\eps},\delta/2)$-superregular.
\end{lemma}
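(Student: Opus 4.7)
The strategy is the standard one for upgrading regularity to superregularity via a small trim, followed by an invocation of the robustness proposition. The obstacle is really just bookkeeping, ensuring that the degraded parameters coming out of Proposition~\ref{newsuperslice} still sit inside the hierarchy $\eps \ll \delta, 1/\Delta$.

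\textbf{Step 1: Identify bad vertices.} For each edge $ij \in E(R)$, call a vertex $x \in V_i$ \emph{bad for $(i,j)$} if $d_G(x,V_j) < (\delta - \eps)m$. By $\eps$-regularity of $G[V_i,V_j]$ (applied to the set of such vertices, were it of size $\geq \eps m$, together with all of $V_j$), there are at most $\eps m$ such vertices in $V_i$. Let $B_i \subseteq V_i$ be the union of the bad sets over all edges $ij \in E(R)$; since $\Delta(R)\leq \Delta$, one has $|B_i| \leq \Delta \eps m$, and our hierarchy $\eps \ll 1/\Delta$ yields $|B_i| \leq \tfrac12 \sqrt{\eps}\,m$.

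\textbf{Step 2: Trim to the right size.} For each $i$, enlarge $B_i$ by removing additional arbitrary vertices to obtain a set $V_i' \subseteq V_i$ with $|V_i'| = (1-\sqrt{\eps})m$. Thus we have removed at most $\sqrt{\eps}\,m$ vertices from each cluster, and $V_i'$ contains no vertex that is bad for any edge incident to $i$ in $R$.

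\textbf{Step 3: Regularity survives the trim.} Fix $ij \in E(R)$. Apply Proposition~\ref{newsuperslice} to $A=V_i, B=V_j, A'=V_i', B'=V_j'$ with $\alpha := \sqrt{\eps}/(1-\sqrt{\eps})\leq 2\sqrt{\eps}$. This gives that $G[V_i',V_j']$ is $(\eps'',\delta'')$-regular with $\eps'' = \eps + 6\sqrt{\alpha}$ and $\delta'' = \delta - 4\alpha$; since $\eps \ll \delta$, one checks $\eps'' \leq 4\sqrt{\eps}$ and $\delta'' \geq \delta/2$, which are the parameters we need.

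\textbf{Step 4: Superregularity.} It remains to verify the degree lower bounds. Let $x \in V_i'$. Since $x \notin B_i$, we have $d_G(x,V_j) \geq (\delta-\eps)m$. At most $\sqrt{\eps}\,m$ vertices of $V_j$ were removed when forming $V_j'$, so $d_G(x,V_j') \geq (\delta - \eps - \sqrt{\eps})m \geq (\delta/2)|V_j'|$, again using $\eps \ll \delta$. The symmetric bound for $y \in V_j'$ holds by the same reasoning. The second half of Proposition~\ref{newsuperslice} therefore upgrades $(\eps'',\delta'')$-regularity to $(4\sqrt{\eps},\delta/2)$-superregularity of $G[V_i',V_j']$, completing the proof.
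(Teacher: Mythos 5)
Your outline — trim off the vertices with low degree into adjacent clusters, pad the removal so every $V_i'$ has exactly size $(1-\sqrt{\eps})m$, then verify that regularity and the degree bounds survive — is the right skeleton; the paper does not reprove the lemma but simply cites \cite[Lemma~4.6]{starxiv}, which packages exactly this argument. Steps~1, 2 and 4 are fine.

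The problem is the quantitative claim in Step~3. Feeding $\alpha = \sqrt{\eps}/(1-\sqrt{\eps}) \leq 2\sqrt{\eps}$ into Proposition~\ref{newsuperslice} gives
\[
\eps'' = \eps + 6\sqrt{\alpha} \leq \eps + 6\sqrt{2}\,\eps^{1/4},
\]
which is of order $\eps^{1/4}$, not $\sqrt{\eps}$. Since $\eps^{1/4}\gg\sqrt{\eps}$ as $\eps\to 0$, the assertion ``$\eps''\leq 4\sqrt{\eps}$'' is false for every $\eps<1$, and the same defect propagates through the invocation of the second half of Proposition~\ref{newsuperslice} in Step~4. The root cause is that Proposition~\ref{newsuperslice} is phrased in terms of symmetric differences — it must tolerate vertices being \emph{added} as well as removed — and that generality is what costs the square root $\sqrt{\alpha}$. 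It is simply too weak a tool for the present lemma.

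The repair is straightforward: since each $V_i'$ is an honest subset of $V_i$, bypass Proposition~\ref{newsuperslice} and argue directly from the definition of regularity (equivalently, use the standard slicing lemma for subsets). If $X\subseteq V_i'$ and $Y\subseteq V_j'$ with $|X|\geq 4\sqrt{\eps}\,|V_i'|$ and $|Y|\geq 4\sqrt{\eps}\,|V_j'|$, then $|X|\geq 4\sqrt{\eps}(1-\sqrt{\eps})m\geq\eps m=\eps|V_i|$ and likewise for $Y$, so $\eps$-regularity of $G[V_i,V_j]$ forces both $d_G(X,Y)$ and $d_G(V_i',V_j')$ to lie within $\eps$ of $d_G(V_i,V_j)$; hence $|d_G(X,Y)-d_G(V_i',V_j')|\leq 2\eps\leq 4\sqrt{\eps}$, and $d_G(V_i',V_j')\geq\delta-\eps\geq\delta/2$. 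Combined with your Step~4 degree bounds (which are correct), this gives $(4\sqrt{\eps},\delta/2)$-superregularity directly.
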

\COMMENT{AT: modified the statement of this lemma, as it didn't quite work correctly for us before (didn't have $L$ huge in our application)}

\subsection{Embedding lemmas}

The next lemma is similar to a  \emph{Partial embedding lemma} from~\cite[Lemma 10]{3partite} which in turn is similar to an embedding lemma 
due to Chv\'atal, R\"odl, Szemer\'edi and Trotter~\cite{trot}.
Given a homomorphism from a graph $H$ into the reduced graph $R$ of $G$  such that every pre-image is small, the lemma yields an embedding of some vertices of $H$ into $G$, while finding large candidate sets for the remaining vertices.
Further (deviating from~\cite{3partite}), we would like to ensure that certain vertices of $H$ are embedded into given target sets of large size.

\begin{lemma}[Embedding lemma with target sets]\label{target}
Let $0 < 1/n  \ll 1/L \ll \eps \ll c \ll \delta  \ll 1/\Delta$ where $n,L \in \mathbb{N}$.
Let $G$ be an $n$-vertex graph, $R$ an $L$-vertex graph and $H$ a graph on at most $\eps n$ vertices such that
\begin{itemize}
\item $G$ has partition $\lbrace V_a : a \in V(R)\rbrace$ where $|V_a| \geq m \geq (1-\eps)n/L$ for all $a \in V(R)$ and $G[V_a,V_{a'}]$ is $(\eps,\delta)$-regular whenever $aa' \in E(R)$.
\item $\Delta(H)\leq\Delta$ and there is a graph homomorphism $\phi: V(H)\rightarrow V(R)$ such that $|\phi^{-1}(a)|\leq 2\eps m$ for all $a \in V(R)$.
\item Let $X \cup Y$ be a partition of $V(H)$ and suppose that there is 
$W \subseteq X$ such that for each $w \in W$, there is a set $S_w \subseteq V_{\phi(w)}$ with $|S_w| \geq cm$.
\end{itemize}
Then there is an embedding $f$ of $H[X]$ into $G$ such that
\begin{itemize}
\item[(i)] $f(x) \in V_{\phi(x)}$ for all $x \in X$;
\item[(ii)] $f(w) \in S_w$ for all $w \in W$;
\item[(iii)] for all $y \in Y$ there exists $C_y \subseteq V_{\phi(y)}\setminus f(X)$ such that $C_y \subseteq N_G(f(x))$ for all $x \in N_H(y) \cap X$, and $|C_y| \geq cm$.
\end{itemize} 
\end{lemma}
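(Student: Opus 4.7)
The plan is the standard greedy sequential embedding. Process the vertices of $X$ in arbitrary order $x_1,\ldots,x_{|X|}$ while maintaining, for every not-yet-embedded $v \in V(H)$, a \emph{candidate set} $C_v \subseteq V_{\phi(v)}$ of vertices still eligible to serve as $f(v)$. Initialize $C_w := S_w$ for $w \in W$ (so $|C_w| \geq cm$) and $C_v := V_{\phi(v)}$ for all other $v$ (so $|C_v| \geq m$). When embedding $x_i$, I select $f(x_i) \in C_{x_i}$ and then, for each non-embedded neighbor $v \in N_H(x_i)$, replace $C_v$ by $C_v \cap N_G(f(x_i))$. Since $\phi$ is a graph homomorphism, the edge $x_iv \in E(H)$ forces $\phi(x_i)\phi(v) \in E(R)$, so the pair $G[V_{\phi(x_i)},V_{\phi(v)}]$ is $(\eps,\delta)$-regular and the regularity machinery applies to each update.

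The main step is to show that a valid choice of $f(x_i)$ always exists. Call $z \in C_{x_i}$ \emph{bad} if either $z \in \{f(x_1),\ldots,f(x_{i-1})\}$ or some non-embedded neighbor $v$ of $x_i$ would have $|C_v \cap N_G(z)| < (\delta-\eps)|C_v|$. At most $|\phi^{-1}(\phi(x_i))| \leq 2\eps m$ vertices of $V_{\phi(x_i)}$ are already used as images. For each of at most $\Delta$ non-embedded neighbors $v$, $\eps$-regularity of $G[V_{\phi(x_i)},V_{\phi(v)}]$ guarantees that at most $\eps|V_{\phi(x_i)}|$ vertices $z$ violate the density condition, provided $|C_v| \geq \eps|V_{\phi(v)}|$. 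To verify this proviso, I would maintain by induction the invariant $|C_v| \geq c(\delta-\eps)^\Delta m$ throughout: since $C_v$ is updated at most $d_H(v) \leq \Delta$ times and each update multiplies its size by at least $\delta-\eps$, this follows from the initial bound $|C_v| \geq cm$. The hierarchy $\eps \ll c \ll \delta \ll 1/\Delta$ gives $c(\delta/2)^\Delta \gg \Delta\eps$, so the total number of bad vertices is much smaller than $|C_{x_i}|$ and a valid choice always exists.

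Finally, I would verify the three conclusions. Condition~(i) is immediate from the construction. For~(ii), the candidate sets only shrink from their initial values, so $f(w) \in C_w \subseteq S_w$. For~(iii), for each $y \in Y$ let $C_y$ be the final candidate set minus $f(X)$; the invariant combined with $|f(X)\cap V_{\phi(y)}|\leq |\phi^{-1}(\phi(y))|\leq 2\eps m$ yields $|C_y| \geq c(\delta-\eps)^\Delta m - 2\eps m \geq cm$. The only real difficulty is the parameter bookkeeping, which reduces to the single inequality $\eps \ll c(\delta/2)^\Delta$; this is guaranteed by the hierarchy since $c$ may be chosen as a small function of $\delta$ and $\Delta$. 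Otherwise the argument follows the standard template from~\cite[Lemma~10]{3partite} and~\cite{trot}, with the only twist being that the target sets $S_w$ are accommodated by initializing $C_w = S_w$ rather than $V_{\phi(w)}$.
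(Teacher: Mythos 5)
Your greedy candidate-set argument is precisely the approach of \cite[Lemma~10]{3partite}, which the paper cites and declares its proof to be essentially identical to (the paper omits the proof). The strategy, the definition of bad vertices, and the regularity application are all standard and correct, and conclusions (i), (ii) follow as you say. The only problem is a bookkeeping error in verifying conclusion~(iii).

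You maintain the invariant $|C_v| \geq c(\delta-\eps)^{\Delta}m$ for every not-yet-embedded $v$, deducing it from the uniform initial bound ``$|C_v|\geq cm$''. But that initial bound is only the relevant one when $v\in W$; for $v\notin W$ you initialize $C_v = V_{\phi(v)}$, which has size at least $m$, not merely $cm$. You then try to conclude~(iii) from
\[
c(\delta-\eps)^{\Delta}m - 2\eps m \;\geq\; cm,
\]
which is false: since $\delta \ll 1/\Delta$ we have $(\delta-\eps)^{\Delta} < 1$, so the left-hand side is strictly less than $cm$. The fix is immediate once you observe that $Y$ is disjoint from $W$ (because $W\subseteq X$ and $X\cap Y=\emptyset$), so every $y\in Y$ has initial candidate set $V_{\phi(y)}$ and therefore satisfies the stronger invariant $|C_y|\geq(\delta-\eps)^{\Delta}m$. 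Then
\[
|C_y\setminus f(X)| \;\geq\; (\delta-\eps)^{\Delta}m - 2\eps m \;\geq\; cm,
\]
which holds since the hierarchy $\eps\ll c\ll\delta\ll 1/\Delta$ lets you take $c \leq (\delta/2)^{\Delta}/2$ and $\eps\ll c$. So your weak invariant should be stated only for $v\in W$, with the stronger invariant $|C_v|\geq(\delta-\eps)^{\Delta}m$ tracked separately for $v\notin W$ (and in particular for all of $Y$); otherwise the argument is sound.
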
%
Since the proof of Lemma~\ref{target} is essentially identical to that of Lemma~10 from ~\cite{3partite}, we omit the proof.

We will also use the Blow-up lemma of Koml\'os, S\'ark\"ozy and Szemer\'edi~\cite{kss2}, which states that, for the purposes of embedding a spanning $k$-partite graph $H$ of bounded degree, a graph $G$ with a vertex partition into $k$ classes, each pair of which is superregular, in fact behaves like a complete $k$-partite graph.
Further, as in Lemma~\ref{target}, one can ensure that a small fraction of the vertices of $H$ are embedded into some given target sets.

\begin{lemma}[Blow-up lemma~\cite{kss2}]\label{blowlem}
For every $d,\Delta,c > 0$ and $k \in \mathbb{N}$ there exist constants $\eps_0$ and $\alpha$ such that the following holds.
Let $n_1, \ldots, n_k$ be positive integers,  $0 < \eps < \eps_0$, and $G$ be a $k$-partite graph with vertex classes $V_1, \ldots, V_k$ where $|V_i|=n_i$ for $i \in [k]$.
Let $J$ be a graph on vertex set $[k]$ such that $G[V_i,V_j]$ is $(\eps,d)$-superregular whenever $ij \in E(J)$.
Suppose that $H$ is a $k$-partite graph with vertex classes $W_1, \ldots, W_k$ of size  at most $n_1, \ldots, n_k$ respectively with $\Delta(H) \leq \Delta$.
Suppose further that there exists a graph homomorphism $\phi : V(H) \rightarrow V(J)$ such that $|\phi^{-1}(i)| \leq n_i$ for every $i \in [k]$.
Moreover, suppose that in each class $W_i$ there is a set of at most $\alpha n_i$ special vertices $y$, each of them equipped with a set $S_y \subseteq V_i$ with $|S_y| \geq cn_i$.
Then there is an embedding of $H$ into $G$ such that every special vertex $y$ is mapped to a vertex in $S_y$.
\end{lemma}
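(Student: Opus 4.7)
The plan is to follow the random greedy embedding strategy of Koml\'os--S\'ark\"ozy--Szemer\'edi, with a separate ``cleanup'' step to take care of both the special vertices and the last sliver of ordinary vertices.

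\textbf{Setup.} First, choose constants $0 < \eps_0 \ll \alpha \ll c, d, 1/\Delta, 1/k$. Let $W_1^\star \subseteq W_i$ denote the set of special vertices in class $W_i$, so $|W_i^\star| \leq \alpha n_i$. I would fix in advance a ``buffer'' set $B \subseteq V(H)$, chosen to contain all special vertices and a few additional ordinary vertices, such that $|B \cap W_i| \leq 2\alpha n_i$ for all $i$ and $B$ is ``well-spread'' in the sense that the vertices of $B$ are pairwise at $H$-distance at least $3$ (this is easily arranged using a simple greedy argument since $\Delta(H) \leq \Delta$, by first enlarging $W_i^\star$ slightly). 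Next, fix an ordering $w_1, w_2, \ldots, w_n$ of the vertices of $H$ that places every vertex of $B$ \emph{after} every vertex of $V(H) \setminus B$.

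\textbf{The random greedy phase.} I would process $w_1, w_2, \ldots, w_{n - |B|}$ in order, embedding each $w_t$ uniformly at random into its current \emph{candidate set}
\[
C_t(w_t) := V_{\phi(w_t)} \cap \bigcap_{\substack{s < t \\ w_s w_t \in E(H)}} N_G(f(w_s)),
\]
minus the vertices already used. The heart of the proof is to show that, for every unembedded vertex $y$ and every time $t$, the candidate set $C_t(y)$ has size at least $(d/2)^{\Delta} n_{\phi(y)}$ with very high probability. This is done by an inductive/martingale argument: at each step, the only way $|C_t(y)|$ can shrink is (i) by removing the image of $w_t$ or (ii), if $y w_t \in E(H)$, by intersecting with $N_G(f(w_t))$. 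By the $(\eps,d)$-superregularity of the pairs $G[V_i, V_j]$ on edges of $J$, a typical random choice of $f(w_t)$ in the large set $C_t(w_t)$ reduces $|C_t(y)|$ by only a $(d \pm O(\eps))$ factor. Applying a Chernoff/Azuma bound (with the independence coming from the random choices) and a union bound over $y$ and $t$ gives that candidate sets never fall below the stated threshold.

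\textbf{Cleanup phase.} After the greedy phase, it remains to embed the buffer $B$. Because $B$ is $H$-distance-$3$ spread, the neighbourhoods in $H$ of distinct buffer vertices within the same class $W_i$ are disjoint, so their candidate sets in $V_i$ behave essentially independently. For each buffer vertex $b$, its final candidate set $C(b)$ has size at least $(d/2)^{\Delta} n_i$ by the previous step; for a special vertex $b$, its \emph{effective} candidate set is $C(b) \cap S_b$, which has size at least $(cd^{\Delta}/2^{\Delta+1})n_i - O(\eps)n_i \geq \alpha' n_i$ for a suitable $\alpha' \gg \alpha$, provided $\alpha$ was chosen small enough. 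This is the place where the hypothesis $|S_b| \geq c n_i$ really gets used. Within each class $W_i$, I would then find a perfect matching between the buffer vertices $b \in B \cap W_i$ and their (effective) candidate sets by verifying Hall's condition: any subset $T$ of buffer vertices has $|\bigcup_{b \in T} C(b)| \geq |T|$ because each candidate set has size $\geq \alpha' n_i$ while $|T| \leq 2\alpha n_i \ll \alpha' n_i$. This matching yields the required embedding, with special vertices landing in their prescribed sets $S_b$.

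\textbf{Main obstacle.} The delicate step is maintaining the concentration of candidate-set sizes throughout the long greedy phase in the presence of the constraints imposed by the superregular (rather than random) structure of $G$; in particular, one must carefully set up a martingale whose bounded differences allow a Chernoff/Azuma-type bound to survive a union bound over all $O(n)$ vertices and $O(n)$ time steps. A secondary subtlety is choosing the buffer $B$ so that it simultaneously contains all special vertices, respects the partition sizes $n_i$, and is $H$-distance-$3$ independent -- all possible because $\alpha$ is small relative to $1/\Delta$, but requiring a brief greedy construction of its own.
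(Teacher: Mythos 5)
This is the Koml\'os--S\'ark\"ozy--Szemer\'edi Blow-up Lemma, which the paper cites from~\cite{kss2} rather than proving, so there is no in-paper proof to compare against. Your sketch does follow the broad shape of the KSS argument (a random greedy phase plus a buffer/cleanup phase), but there is a genuine gap in the cleanup step that the KSS proof takes considerable care to close.

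The gap is in the interplay between the candidate-set lower bound and the set of vertices that remain unused. You define $C_t(w_t)$ as living inside $V_{\phi(w_t)}$ intersected with the appropriate neighbourhoods, \emph{minus used vertices}, and you claim you can maintain $|C_t(y)| \geq (d/2)^{\Delta}\,n_{\phi(y)}$ throughout the greedy phase for every unembedded $y$. But for a buffer vertex $b$ with $\phi(b)=i$, at the end of the greedy phase $C(b)$ is contained in the set $U_i$ of unused vertices of $V_i$, which has size $n_i - |W_i\setminus B| \le (n_i - |W_i|) + 2\alpha n_i$; in the spanning case $|W_i|=n_i$ this is at most $2\alpha n_i \ll (d/2)^{\Delta} n_i$ (you chose $\alpha$ much smaller than $d,1/\Delta$). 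So the claimed lower bound on $|C(b)|$ is impossible, and the later Hall's condition argument collapses: you compare $|C(b)| \ge \alpha' n_i$ against $|T|\le 2\alpha n_i$, but the relevant quantity is $\bigl|\bigcup_{b\in T}\bigl(C(b)\cap U_i\bigr)\bigr|$, and each $C(b)\cap U_i$ has size only of order $d^{\Delta}|U_i|$, which is comparable to $|T|$. A perfect matching in a bipartite graph between $\approx 2\alpha n_i$ buffer vertices and a set $U_i$ of the same order, with each vertex seeing only a $d^{\Delta}$-fraction of $U_i$, is not guaranteed by a crude cardinality comparison; it needs the bipartite graph to be expander-like (or one applies a defect/K\"onig argument after establishing a regularity-type property of the pair $(B\cap W_i, U_i)$).

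This is precisely where the real work of the KSS proof happens: during the greedy phase one must track not just $|C_t(y)|$ but enough second-order information (co-degrees, or regularity of the auxiliary bipartite graph between buffer vertices and yet-unfilled image slots) to guarantee the final matching exists, and the random choices have to be made from carefully reweighted distributions rather than uniformly, to avoid systematically depleting the candidates that the buffer will later need. Your ``Main obstacle'' paragraph flags the concentration of candidate-set sizes but not this saturation/matching issue, which is arguably the deeper one. If you want to make the sketch honest, you should replace the Hall's condition paragraph with a statement that the greedy phase maintains superregularity (or a quantified expansion property) of the pair formed by $B\cap W_i$ and the set of unused vertices, and then invoke a deficiency-form matching theorem; and acknowledge that this stronger invariant is what actually has to be carried through the martingale argument.
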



\section{Finding the power of a Hamilton cycle}\label{cyclesec}
The next result
states that for every $r \in \mathbb{N}$, every large locally dense $n$-vertex graph $G$  with minimum degree at least $(1/2+o(1))n$ contains the $r$th power of a Hamilton cycle.
This is a very special case of our main result, Theorem~\ref{main}.

\begin{theorem}\label{rcycle}
For all $r,s\in \mathbb{N}$ and $d,\eta>0$, there exist $\rho,n_0>0$ such that every $(\rho,d)$-dense graph $G$ on $n \geq n_0$ vertices with $\delta(G)\geq (1/2+\eta)n$ contains the $r$th power of a  Hamilton cycle.
In fact, for every $n'\in \mathbb N$ such that $n-s\leq n' \leq n$, $G$ contains the $r$th power of a cycle covering precisely $n'$ vertices.
\end{theorem}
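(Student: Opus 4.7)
It suffices to prove that $G$ contains $C^r_n$. For the ``in fact'' part, given any $n'$ with $n-s\le n'\le n$, delete an arbitrary subset $V''\subseteq V(G)$ of size $n-n'\le s$. By Lemma~\ref{nbrhd}(ii) the graph $G':=G\setminus V''$ is $(\rho',d)$-dense for some $\rho'\le 2\rho$, and $\delta(G')\ge \delta(G)-s\ge (1/2+\eta/2)n'$ for $n$ large. Applying the Hamilton case to $G'$ then yields $C^r_{n'}\subseteq G$.

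\textbf{Absorbing path via connecting-absorbing.} We prove the Hamilton case via the connecting-absorbing method, with Lemma~\ref{connect} and Lemma~\ref{nbrhd}(iv) as the core ingredients. Fix $1/n\ll\rho\ll\gamma\ll\eta,d,1/r$. For each $v\in V(G)$, $G[N_G(v)]$ is locally dense by Lemma~\ref{nbrhd}(i), so Lemma~\ref{nbrhd}(iv) produces $\Omega(n^{2r})$ ordered tuples $(u_1,\ldots,u_{2r})$ spanning an extendable $K_{2r}$ in $N_G(v)$. For any such tuple, both $u_1\ldots u_{2r}$ and $u_1\ldots u_r\, v\, u_{r+1}\ldots u_{2r}$ are $r$-paths (since $v$ is adjacent to every $u_i$ and all other required edges lie in $K_{2r}$), so the tuple is an \emph{absorber} for $v$. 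A standard probabilistic selection produces a collection $\mathcal{A}$ of pairwise disjoint absorbers with $|\mathcal{A}|\le\gamma n$, such that every vertex of $G$ is absorbed by at least $\gamma^3 n$ members of $\mathcal{A}$. Each absorber is an $r$-path on $2r$ vertices whose two halves induce extendable $K_r$-cliques. Iterative application of Lemma~\ref{connect} (always avoiding vertices already used) chains these absorbers into a single $r$-path $P_A$ of size $O(\gamma n)$ whose endpoints are $(\eta n/2)$-extendable $K_r$-cliques. By design, $P_A$ is \emph{flexible}: for any $T\subseteq V(G)\setminus V(P_A)$ with $|T|\le\gamma^4 n$, one may switch $|T|$ of its constituent absorbers to produce an $r$-path on $V(P_A)\cup T$ with the same endpoints.

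\textbf{Almost-spanning cover, closing up, and absorption.} Randomly set aside a reservoir $W\subseteq V(G)\setminus V(P_A)$ of size $\gamma^5 n$, which inherits the locally dense and minimum-degree properties. Apply the Regularity lemma (Lemma~\ref{reg}) to $G'':=G\setminus(V(P_A)\cup W)$ to obtain a reduced graph $R$ which, by Lemma~\ref{inherit}, is locally dense with $\delta(R)\ge(1/2+\eta/4)L$. We now locate a spanning copy of $Z^{r+1}_\ell$ in $R$: first, the Reiher--Schacht theorem (the $K_{r+1}$-factor case of Theorem~\ref{main}, see~\cite{bms}) yields a $K_{r+1}$-factor in $R$; second, $\ell$ applications of Lemma~\ref{connect} inside $R$ arrange its cliques cyclically, each clique being extendable in $R$ via Lemma~\ref{nbrhd}(iv). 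By~(\ref{CZ}), $Z^{r+1}_\ell$ hosts $C^r_{(r+1)\ell}$. Refining to superregular pairs via Lemma~\ref{superslice} and applying the Blow-up lemma (Lemma~\ref{blowlem}) embeds an $r$-path $P^*$ into $G''$ covering all but at most $\gamma^5 n/2$ of its vertices, with endpoints prescribed to be extendable $K_r$-cliques. Two final applications of Lemma~\ref{connect} (with internal vertices drawn from $W$) attach the two ends of $P^*$ to the two ends of $P_A$, producing an $r$-cycle omitting a leftover set $T$ with $|T|\le\gamma^4 n$. The flexibility of $P_A$ then absorbs $T$, yielding the desired Hamilton $r$-cycle.

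\textbf{Main obstacle.} The principal difficulty lies in the construction of $P_A$: one must control the probabilistic choice of absorbers and the repeated use of Lemma~\ref{connect} so that the absorbers chain into a single $r$-path while every $K_r$-clique boundary retains enough extendability for the next application of the Connecting lemma. A second, parallel difficulty is the passage from a $K_{r+1}$-factor of $R$ to a \emph{cyclically arranged} $Z^{r+1}_\ell$: here one must verify that the cliques of the $K_{r+1}$-factor remain sufficiently extendable in $R$ after each connection, so that Lemma~\ref{connect} continues to apply inside~$R$ throughout the cyclic process.
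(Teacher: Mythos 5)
Your absorber construction and overall connecting--absorbing strategy match the paper's: both select extendable copies of $K_{2r}$ inside vertex neighbourhoods as absorbers, chain them via Lemma~\ref{connect}, and set aside a random reservoir for later connecting. Your reduction of the second statement to the Hamilton case is also essentially the same. The ``almost-spanning cover'' step, however, contains a genuine gap.

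You propose to locate a \emph{spanning} copy of $Z^{r+1}_\ell$ in the reduced graph $R$ by combining a $K_{r+1}$-factor of $R$ (via Reiher--Schacht) with $\ell$ applications of Lemma~\ref{connect} inside $R$. This does not work, for two reasons. First, it is circular: a spanning $Z^{r+1}_\ell$ in $R$ contains $C^{r}_{|R|}$, the $r$th power of a Hamilton cycle in $R$, and since $R$ is a locally dense graph with minimum degree $>(1/2+o(1))|R|$, producing it is precisely the statement of Theorem~\ref{rcycle} applied to $R$. Second, the connecting mechanism cannot yield anything spanning: Lemma~\ref{connect} gives an $r$-path between two $r$-cliques on $3r$ \emph{additional} vertices, so if the $K_{r+1}$'s already form a factor of $R$, the connecting vertices can only be drawn from other factor-cliques, and you end up with a trail that re-visits clusters, not with a spanning $Z^{r+1}_\ell$. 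Your ``main obstacle'' paragraph flags extendability as the concern, but the mechanism fails for this more basic reason.

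The paper bypasses all of this by never seeking any Hamiltonian structure in $R$: it finds a $K_{2C+1}$-\emph{tiling} of $R$ (a tiling, not a factor; it may miss up to $\eta_3\ell$ clusters, and this is fine because the leftover is swept up by the absorbing path at the end), blows each tile up via Lemma~\ref{superslice} and the Blow-up lemma (Lemma~\ref{blowlem}) to a long $2C$-path inside $G$, and then links these constantly-many $2C$-paths to each other and to the ends of $P_{abs}$ using reservoir vertices via Lemma~\ref{ingredient} directly. Crucially, $C=\lceil 4r/\eta_3\rceil$ is taken much larger than $r$, so the two ends of each $2C$-path are automatically cliques of the right size $\lceil 4r/\eta_3\rceil$ for an application of Lemma~\ref{ingredient}; no extendability of individual path-ends ever needs to be certified, which is exactly the property you were unable to secure. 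Replacing your $R$-level $Z^{r+1}_\ell$ step with this tiling-and-blow-up construction also removes the need to invoke Reiher--Schacht.
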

Note that Theorem~\ref{rcycle} is an important tool in the  proof of Theorem~\ref{main}, in the same way that (an approximate version of) the result in~\cite{kss} was used in the proof of Theorem~\ref{bst}.
Indeed, Theorem~\ref{rcycle}  ensures that the reduced graph $R$ of a graph $G$ (as in Theorem~\ref{main}) will contain a spanning $(4r-1)$-cycle.
By (\ref{CZ}) this implies $R$ contains a spanning copy of $Z^{2r} _{\ell}$.
\COMMENT{Actually in this application of Theorem~\ref{rcycle}, we will instead obtain a spanning $r^*$-cycle in $R$ where $r^*$ is significantly bigger than $4r$. This ensures we obtain an extra useful property, in addition to the main desired property that 
$Z^{2r} _{\ell}$ lies in $R$.}
As outlined in Section~\ref{sketch}, this copy of $Z^{2r} _{\ell}$ will be used as a `guide' for embedding $H$ into $G$.

We remark  that one can give a significantly shorter proof of Theorem~\ref{rcycle} if one only seeks the $r$th power of a  cycle covering (say) at least  $(1-\eta )n $ vertices in $G$. However, for our application to Theorem~\ref{main} we (rather subtly) require that
we have a $(4r-1)$-cycle in $R$ covering all but a very small number of vertices (much fewer than $\rho|R|$ vertices in $R$ can be left uncovered). So such a weaker version of Theorem~\ref{rcycle} is not sufficient.

The proof of Theorem~\ref{rcycle} is an application of the \emph{Connecting--Absorbing method}, a technique first developed by R\"odl,  Ruci\'nski and  Szemer\'edi~\cite{rrs2}.
The first step in the proof is to find a short \emph{absorbing} $2r$-path $P_{abs}$ in $G$ which has the property that $V(P_{abs})\cup Z$ spans an $r$-path in $G$ (with the same start- and endpoints as $P_{abs}$) for any very small set of vertices $Z$.
We then reserve a small pot of vertices $V'$ (known as  a \emph{reservoir}), that will allow us to connect up pairs of paths into longer paths.
Next we (via an application of the Regularity lemma) find a collection $\mathcal P$ of a constant number of vertex-disjoint $2C$-paths that together cover almost all of the remaining vertices in $G$ (here $C$ is chosen to be significantly bigger than $r$).
Using vertices from the reservoir, we are then able to connect together all the paths in $\mathcal P$ together with $P_{abs}$ to form a single $r$-cycle  covering almost all the vertices in $G$. The remaining uncovered vertices in $G$ are absorbed 
by $P_{abs}$ to obtain the $r$th power of a Hamilton cycle.

\smallskip

{\noindent \it Proof of Theorem~\ref{rcycle}.}
Note that if $n$ is sufficiently large then any $n'$-vertex induced subgraph $G'$ of an $n$-vertex graph $G$ as in the theorem must be $(2\rho,d)$-dense with  $\delta (G')\geq (1/2+\eta/2)n'$.
So as the $r$th power of a Hamilton cycle in $G'$ corresponds to an $r$-cycle  of length $n'$ in $G$,
it suffices to prove the first part of the statement of the theorem.
 
Further, it suffices to prove the theorem under the additional assumption that $d \ll \eta, 1/r$.
Define constants $ \rho, \eps, \delta, d_1, \eta _0,\eta _1,\eta_2, \eta _3>0$ and $M' \in \mathbb N$, and apply the Regularity lemma (Lemma~\ref{reg}) 
with inputs $\eps$ and $M'$ to obtain some $M=M(\eps,M')$ so that we have
\begin{align}\label{hier}
0<  1/M \leq 1/M' \ll \eps \ll \delta \ll \rho   \ll \eta _3  \ll \eta _2 \ll \eta _1 \ll \eta _0 \ll d_1 \ll d \ll \eta, 1/r.
\end{align}
Let $n$ be sufficiently large, and consider any $n$-vertex graph $G$ that is $(\rho,d)$-dense  with $\delta(G)\geq (1/2+\eta)n$.

Our initial aim is to construct a small absorbing $2r$-path $P_{abs}$. The next claim provides the building blocks for this absorbing path.
\begin{claim}\label{block}
There exists a collection $\mathcal K$ of at most $\eta  _0n/8r$ vertex-disjoint copies of $K_{2r}$ in $G$ such that:
\begin{itemize}
\item[(i)] Each $K \in \mathcal K$ is $d_1n$-extendable in $G$.
\item[(ii)] Given any vertex $x\in V(G)$, there are at least $2\eta _2 n$ copies $K$ of $K_{2r}$ in $\mathcal K$ so that $V(K) \subseteq N_G(x)$.
\end{itemize}
\end{claim}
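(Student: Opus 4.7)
The plan is a probabilistic alteration argument. First I build a large pool of candidate cliques by two applications of Lemma~\ref{nbrhd}(iv). Globally, provided $d_1 \le d^{2r}/2^{2r+1}$, the set $\mathcal F$ of $d_1 n$-extendable copies of $K_{2r}$ in $G$ has $|\mathcal F| \ge \gamma_1 n^{2r}$ for some $\gamma_1 = \gamma_1(d,r) > 0$. Locally, for each $x \in V(G)$ the graph $G[N_G(x)]$ has at least $(1/2+\eta) n$ vertices and, by Lemma~\ref{nbrhd}(i), is $(\rho',d)$-dense with $\rho' = O(\rho)$; a second application of Lemma~\ref{nbrhd}(iv) gives a subcollection $\mathcal F_x \subseteq \mathcal F$ consisting of $K_{2r}$'s that lie inside $N_G(x)$ with $|\mathcal F_x| \ge \gamma_2 n^{2r}$. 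These copies are still $d_1 n$-extendable in $G$, because common neighbourhoods can only grow when passing from $G[N_G(x)]$ to $G$.

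Next I would sample $\mathcal K_0 \subseteq \mathcal F$ by including each $K$ independently with probability $p := \eta_0 n /(16 r |\mathcal F|)$. Then $\mathbb E[|\mathcal K_0|] = \eta_0 n/(16 r)$ and $\mathbb E[|\mathcal F_x \cap \mathcal K_0|] = p|\mathcal F_x| \ge \gamma_3 \eta_0 n$ for every $x$, with $\gamma_3 = \gamma_3(d,r) > 0$. A crude double count shows that there are at most $2r |\mathcal F| n^{2r-1}$ overlapping pairs $\{K,K'\} \subseteq \mathcal F$ with $V(K) \cap V(K') \ne \emptyset$, so the expected number of such pairs sampled into $\mathcal K_0$ is $p^2 \cdot 2r|\mathcal F| n^{2r-1} = O(\eta_0^2 n)$. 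Applying Chernoff to each of the binomial variables $|\mathcal K_0|$ and $|\mathcal F_x \cap \mathcal K_0|$ (each failure probability is exponentially small in $n$, so a union bound over the $n$ vertices is affordable), together with Markov's inequality for the overlap count, with positive probability one obtains a subfamily $\mathcal K_0$ satisfying simultaneously $|\mathcal K_0| \le \eta_0 n/(8r)$, $|\mathcal F_x \cap \mathcal K_0| \ge \gamma_3 \eta_0 n/2$ for every $x$, and overlap count $O(\eta_0^2 n)$.

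Finally, delete one clique from each overlapping pair to obtain a vertex-disjoint family $\mathcal K \subseteq \mathcal F$. The bound $|\mathcal K| \le \eta_0 n/(8r)$ is inherited, and (i) holds because $\mathcal K \subseteq \mathcal F$. For any $x$, the count $|\mathcal F_x \cap \mathcal K|$ drops by at most the number of deleted cliques, so
\[
|\mathcal F_x \cap \mathcal K| \ge \tfrac{\gamma_3 \eta_0 n}{2} - O(\eta_0^2 n) \ge 2\eta_2 n,
\]
where the final inequality uses $\eta_2 \ll \eta_0$ from the hierarchy (which hides the constants depending on $d$ and $r$), yielding (ii). The main delicate point is calibrating $p$: it must be small enough that $\mathbb E[|\mathcal K_0|]$ fits the budget $\eta_0 n/(8r)$, yet large enough that $\mathbb E[|\mathcal F_x \cap \mathcal K_0|]$ stays linear in $n$ (so that Chernoff plus union bound over the $n$ vertices goes through), while the quadratic-in-$\eta_0$ overlap term remains dominated by the linear-in-$\eta_0$ savings. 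The hierarchy $\eta_3 \ll \eta_2 \ll \eta_1 \ll \eta_0 \ll d_1 \ll d$ is set up precisely to afford this slack.
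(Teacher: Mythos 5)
Your proposal is correct and follows essentially the same probabilistic-alteration argument as the paper: build a pool of $d_1n$-extendable $K_{2r}$'s globally and locally via two applications of Lemma~\ref{nbrhd}, sample independently, control overlaps by Markov, concentrate the sizes by Chernoff and a union bound, then delete one clique from each overlapping pair. The only difference is cosmetic -- you calibrate $p$ directly to $|\mathcal F|$ so $\mathbb E|\mathcal K_0|$ hits the target $\eta_0 n/(16r)$, whereas the paper takes $p=\eta_1/n^{2r-1}$ (one level smaller in the hierarchy) and uses the slack $\eta_1 \ll \eta_0$ to land under the budget.
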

\begin{claimproof}
Let $\mathcal C$ denote the set of all copies of $K_{2r}$  that are $d_1n$-extendable in $G$. So certainly $|\mathcal C|\leq n^{2r}$.
Consider any $x \in V(G)$. Since $d_G(x)\geq n/2$, Lemma~\ref{nbrhd}(i) implies that $G[N(x)]$ is $(4 \rho, d)$-dense. Thus, Lemma~\ref{nbrhd}(iv) implies that
there are at least $(d/2)^{\binom{2r+1}{2}} (n/2)^{2r}/(2r)!$ copies $K$ of $K_{2r}$ in $\mathcal C$ so that $V(K) \subseteq N_G(x)$. 
(Here we use the property that $d^{2r}/2^{2r} \geq d_1$ by (\ref{hier}).) Let $L_x$ denote the set of these copies of $K_{2r}$.

Let $\mathcal C_p$ be obtained from $\mathcal C$ by selecting each $K \in \mathcal C$ independently with probability
$$p:= \frac{\eta _1}{n^{2r-1}}.$$
Hence, 
$$\mathbb E (|\mathcal C_p|)\leq\eta _1 n \ \ \text{ and } \ \ \mathbb E( |\mathcal C_p \cap L_x |) \geq (d/2)^{\binom{2r+1}{2}} \frac{ (n/2)^{2r}}{(2r)! } \times  \frac{\eta _1}{n^{2r-1}} \stackrel{(\ref{hier})}{\geq} 2 d_1 \eta _1 n $$
for each $x \in V(G)$.
Thus, a Chernoff bound implies that, with high probability,
\begin{align}\label{nicebounds}
|\mathcal C_p| \leq 2\eta _1 n \ \ \text{ and } \ \ |\mathcal C_p \cap L_x |\geq  d_1 \eta _1 n 
\end{align}
for all $x \in V(G)$.
Let $Y$ denote the number of pairs of copies of $K_{2r}$ from $\mathcal C_p$ that share at least one vertex.
Then
$$\mathbb E( Y) \leq p^2 \binom{n}{2r} 2r \binom{n}{2r-1} \leq \eta ^2 _1 n.$$ 
By Markov's inequality the probability that $|Y|\leq 2\eta ^2 _1 n$ is at least $1/2$. Therefore, there is a choice of $\mathcal C_p$ such that this condition holds together with
(\ref{nicebounds}). Fix such a choice of $\mathcal C_p$; then for each intersecting pair of cliques in $\mathcal C_p$, remove one of them to obtain a new collection $\mathcal K$.
Note that the definition of $\mathcal C_p$ and (\ref{nicebounds}) implies that $\mathcal K$ is a collection of at most $\eta _0 n/8r$ vertex-disjoint copies of $K_{2r}$ in $G$.
Further, since $d_1 \eta _1 n -  2\eta ^2 _1 n\geq d_1 \eta _1 n/2 \geq 2 \eta _2 n$, we see that (ii) is satisfied, as desired.
\end{claimproof}

\smallskip

With Claim~\ref{block} at hand, it is straightforward to obtain our desired absorbing $2r$-path $P_{abs}$.

\begin{claim}\label{pabs}
$G$ contains a $2r$-path $P_{abs}$ on at most $\eta _0 n$ vertices such that the following conditions hold. 
\begin{itemize}
\item[(i)] Both the set of the first and last $2r$ vertices on $P_{abs}$ induce $K_{2r}$s in $G$ that are $d_1n$-extendable. (Denote these sets by $S$ and $E$ respectively.)
\item[(ii)] Given any set $Z\subseteq V(G)\setminus V(P_{abs})$ of size at most $\eta _2 n$, there is an $r$-path $P$ in $G$ with vertex set $V(P_{abs}) \cup Z$ whose first $2r$ vertices are the elements of $S$ (ordered as in $P_{abs}$) and
the last $2r$ vertices are the elements of $E$ (ordered as in $P_{abs}$).
\end{itemize}
\end{claim}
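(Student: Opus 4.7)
The plan is the Connecting--Absorbing method: chain the $2r$-cliques from Claim~\ref{block} into a single $2r$-path $P_{\text{abs}}$ via iterated applications of the Connecting lemma (Lemma~\ref{connect}) with parameter $2r$ in place of $r$, and then exploit Claim~\ref{block}(ii) to show that each $K \in \mathcal K$ can independently absorb one extra vertex $z$ satisfying $V(K) \subseteq N_G(z)$.

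I would enumerate $\mathcal K$ as $K^1,\ldots,K^t$ with $t \leq \eta_0 n/(8r)$, fix an ordering $V(K^i) = y^i_1\ldots y^i_{2r}$ of each clique (trivially a $2r$-path), and for $i = 1,\ldots,t-1$ iteratively apply Lemma~\ref{connect} with $r$ replaced by $2r$, with $X := V(K^i)$, $Y := V(K^{i+1})$, and $W$ the union of all $V(K^j)$ and all previously constructed $V(P^j)$; this yields a $2r$-path $P^i$ on $6r$ vertices, disjoint from everything already placed and from the remaining $K^j$, such that both $V(K^i) P^i$ and $P^i V(K^{i+1})$ are $P^{2r}_{8r}$'s. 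The hypotheses hold: each $K^i$ is $d_1 n$-extendable, hence $\eta n$-extendable since $d_1 \gg \eta$, and $|W| \leq t(2r) + (t-1)(6r) \leq \eta_0 n \ll \eta n/4$. A direct verification shows the full concatenation $P_{\text{abs}} := K^1 P^1 K^2 P^2 \ldots P^{t-1} K^t$ is itself a $2r$-path: the only edges required are those already supplied by the local $P^{2r}_{8r}$'s, since non-adjacent $K$-segments are separated by $\geq 6r+1 > 2r$ positions. Setting $S := V(K^1)$ and $E := V(K^t)$---both $d_1 n$-extendable by Claim~\ref{block}(i)---gives~(i).

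For~(ii), given $Z$ with $|Z|\leq \eta_2 n$, Claim~\ref{block}(ii) supplies for each $z\in Z$ at least $2\eta_2 n$ cliques $K^i \in \mathcal K$ with $V(K^i) \subseteq N_G(z)$; excluding $K^1, K^t$ and matching greedily yields an injection $\iota : Z \to \{2,\ldots,t-1\}$ with $V(K^{\iota(z)}) \subseteq N_G(z)$ for all $z \in Z$. I would then define $P$ from $P_{\text{abs}}$ by replacing, for each $z \in Z$, the segment $y_1\ldots y_{2r}$ of $K^{\iota(z)}$ by $y_1\ldots y_r z y_{r+1}\ldots y_{2r}$. To verify that $P$ is an $r$-path: inside a modified segment, $V(K^{\iota(z)})\cup\{z\}$ induces a clique, so every edge at distance $\leq r$ is present; between two distinct $K$-segments the intervening $6r$-vertex connecting path ensures that the distance in the new ordering exceeds $r$; and between a $K$-segment and an adjacent connecting segment at most one insertion lies between the two vertices in question, so the $2r$-path property of $P_{\text{abs}}$ supplies every edge at distance $\leq r+1$ now required. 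The endpoints $S, E$ retain their positions in the new ordering, giving~(ii).

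The main obstacle is bookkeeping rather than a deep new idea: one must verify that iterating Lemma~\ref{connect} with parameter $2r$ respects the $|W|\leq \eta n/4$ bound across all $t$ steps, and that the simultaneous local insertions of the vertices of $Z$ preserve the global $r$-path structure of $P$. Both reduce to the observation that each absorption is confined to a single $2r$-clique segment and that adjacent clique segments are buffered by a $6r$-vertex connecting path---a length chosen precisely to dominate the position shifts caused by insertion.
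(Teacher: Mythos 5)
Your strategy is the same as the paper's: chain the cliques $K^1,\ldots,K^t$ of Claim~\ref{block} via iterated applications of Lemma~\ref{connect} with $2r$ in place of $r$, keeping $W$ equal to the union of $V(\mathcal K)$ and the previously built connectors, concatenate into $P_{abs}$, and absorb each $z\in Z$ by splicing it into the midpoint of a distinct $K^{j}$ with $V(K^{j})\subseteq N_G(z)$. The absorption bookkeeping (greedy choice of distinct $j_i\in\{2,\ldots,t-1\}$, the observation that distinct insertions are separated by the $6r$-vertex connectors, and the resulting $r$-path property) is exactly the paper's argument.

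However, your parameter check for Lemma~\ref{connect} has the hierarchy backwards. You write that each $K^i$ is ``$d_1 n$-extendable, hence $\eta n$-extendable since $d_1\gg\eta$,'' but in~(\ref{hier}) we have $\eta_0\ll d_1\ll d\ll\eta$, so $d_1\ll\eta$ and $d_1 n$-extendability does \emph{not} imply $\eta n$-extendability. The cliques need not be $\eta n$-extendable, and as written the hypothesis of Lemma~\ref{connect} (applied with the ambient $\eta$) is not verified. The fix is what the paper does: apply Lemma~\ref{connect} with $d_1$ playing the role of $\eta$. Then $d_1 n$-extendability is precisely the hypothesis needed, the hierarchy requirement $\rho\ll d,d_1\leq 1/(2r)$ holds, and the bound you need on the avoidance set becomes $|W|\leq d_1 n/4$, which your estimate $|W|\leq\eta_0 n$ still satisfies since $\eta_0\ll d_1$. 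With that substitution the rest of your argument goes through unchanged.
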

\begin{claimproof}
Let $\mathcal K$ be as in Claim~\ref{block}, and enumerate its elements by $K^1, \dots, K^t$ (so $t \leq \eta  _0n/8r$).
Apply Lemma~\ref{connect} to $G$ with $d_1,2r, V(K^1), V(K^2), V(\mathcal K)$ playing the roles of $\eta, r, X,Y,W$. (Note we can indeed apply this lemma by Claim~\ref{block}(i) and as $|V(\mathcal K)|\leq d_1n/4$.)
We thus obtain a copy $P_1=x^1_1\dots x^1_{6r}$ of $P^{2r}_{6r}$ in $G$ avoiding $V(\mathcal K)$ such that $V(K^1)x^1_1\dots x^1_{6r}$ and $x^1_1\dots x^1_{6r}V(K^2)$ both induce copies of $P^{2r}_{8r}$.
Repeating this process iteratively we obtain a collection  $P_1, \dots , P_{t-1}$ of vertex-disjoint copies of $P^{2r}_{6r}$ in $G$ so that
$V(K^i)x^i_1\dots x^i_{6r} V(K^{i+1})$ induces a copy of $P^{2r}_{8r}$ in $G$ for each $1 \leq i \leq t-1$. (Here we have written $P_i=x^i_1\dots x^i_{6r}$.)
Note that to ensure the $P_i$s are vertex-disjoint, at every step we update $W$; so at step $i$, $W$ contains $V(\mathcal K)$ and the vertices from $P_1,\dots, P_{i-1}$ (so $|W|\leq d_1n/4$). 

Let $P_{abs}$ denote the $2r$-path
obtained by the following concatenation:
$$P_{abs}:=V(K^1)P_1V(K^2)P_2V(K^3)\dots V(K^{t-1})P_{t-1}V(K^t).$$

Notice that $P_{abs}$ contains $(t-1)8r+2r \leq 8rt \leq \eta _0 n$ vertices. Further (i) follows since both $K^1$ and $K^{t}$ are $d_1n$-extendable in $G$ by definition of $\mathcal K$.
 Consider any set $Z=\{z_1,\dots, z_{\ell}\}\subseteq V(G)\setminus V(P_{abs})$ of size at most $\eta _2 n$. For each $1\leq i \leq \ell$, by Claim~\ref{block}(ii), there are at least $\eta _2 n$ choices for $j_i$ such that:
\begin{itemize}
\item $2\leq j_i \leq t-1$;
\item $V(K^{j_i})\subseteq N_G (z_i)$.
\end{itemize}
In particular, writing $V(K^{j_i})=\{y_1,\dots, y_{2r}\}$, notice that 
\begin{align}\label{con}
P_{j_i-1}y_1\dots y_r z_i y_{r+1}\dots y_{2r} P_{j_i}
\end{align}
is an $r$-path in $G$.

Since we have at least $\eta _2 n$ choices, we may define $j_1,j_2,\dots, j_{\ell}$ to be distinct.
 We can then insert each $z_i$ into $P_{abs}$ as indicated by~(\ref{con}) to obtain the  desired $r$-path $P$ on $V(P_{abs}) \cup Z$.
\end{claimproof}

\medskip

Let $S$ be as in Claim~\ref{pabs}. Then $|N_G (S) \setminus V(P_{abs})| \geq d_1 n/2$. Lemma~\ref{nbrhd}(i) implies that $G_S:=G[N_G(S)\setminus V(P_{abs})]$ is $(4\rho/d ^2_1,d)$-dense and therefore 
$(\rho ^{1/2},d)$-dense.
Set 
\begin{align}\label{Cdef1}
C:= \lceil 4r/\eta _3 \rceil.
\end{align}
Note that $ \rho ^{1/2} \ll d , 1/C$. Thus, Lemma~\ref{nbrhd}(iv) implies that $G_S$ contains a copy $K^S_{2C+1}$ of $K_{2C+1}$. Similarly, we find a copy $K^E _{2C+1}$ of $K_{2C+1}$ in $G$ that is disjoint from
$K^S_{2C+1}$ and $P_{abs}$ so that $V(K^E_{2C+1}) \subseteq N_G (E)$. We will view both $K^S_{2C+1}$ and $K^E _{2C+1}$ as $2C$-paths of length $2C+1$.

\smallskip

Set $G_0:=G\setminus (V(P_{abs}) \cup V(K^S_{2C+1}) \cup V(K^E_{2C+1}) )$. Certainly, $|G_0|\geq (1-2\eta _0)n$ and 
\begin{align*}
d_G (x,V(G_0))\geq (1/2+3\eta/4)n \text{ for all } x \in V(G).
\end{align*}

By selecting vertices randomly (and
applying a Chernoff bound), one can obtain a set $V'\subseteq V(G_0)$ of $n':=\eta _3 n$ vertices such that 
\begin{align}\label{degV'}
d_G (x,V')\geq (1/2+\eta/2)n' \text{ for all } x \in V(G).
\end{align}
Set $G_1:=G[V']$ and $G_2:=G_0\setminus V'$. Lemma~\ref{nbrhd}(i) implies that $G_1$ is $(\rho/\eta _3^2,d)$-dense and thus, $(\rho ^{1/2},d)$-dense.
Similarly $G_2$ is $(2\rho,d)$-dense. 

Apply Lemma~\ref{reg} to  $G_2$ with parameters $\eps, \delta$ and $M'$ to obtain a partition $V_0,V_1,\dots, V_\ell$ of $V(G_2)$, pure graph $G'_2$ and the reduced graph $R$ of $G_2$.
Here $V_0$ is the exceptional set on at most $\eps n$ vertices and $M'\leq \ell \leq M$.
Set $m:=|V_1|=\dots=|V_\ell|$.
Then Lemma~\ref{inherit} implies that $R$ is $(6\rho,d)$-dense. In particular, Lemma~\ref{nbrhd}(i) implies that $R'$ is $(6\rho/\eta _3 ^2, d)$-dense for any $R'\subseteq R$ on $\eta _3\ell$ vertices.

Note that $1/\ell \ll 6\rho/\eta _3 ^2 \ll d ,1/C$. Thus, Lemma~\ref{nbrhd}(iv) implies that every $R'\subseteq R$ on $\eta _3 \ell$ vertices contains a copy of $K_{2C+1}$. 
In particular, $R$ contains a $K_{2C+1}$-tiling $\mathcal T$
covering all but at most $\eta _3 \ell $ vertices.

Consider any copy $K$ of $K_{2C+1}$ in $\mathcal T$. The vertices of $K$ correspond to clusters $V_{i_1},\dots, V_{i_{2C+1}}$ in $G_2$; let $G_K$ denote the  subgraph of $G'_2$ induced by the vertices in these clusters combined.
Every tuple $(V_{i_j},V_{i_k})$ of such clusters  forms an $\eps$-regular pair of density at least $\delta$ in $G_K$.
Moreover, Lemma~\ref{superslice} implies that for each such cluster $V_{i_j}$ there is a subset $V'_{i_j}\subseteq V_{i_j}$ of size $(1-\eps ^{1/2})m$ so that $(V'_{i_j},V'_{i_k})$ forms an $(4\eps ^{1/2},\delta/2)$-superregular pair in $G_K$ (for each $1 \leq j \not = k \leq 2C+1$).
The Blow-up lemma (Lemma~\ref{blowlem}) now implies that $G_K$ contains a $2C$-path  covering all but at most $(2C+1)\eps ^{1/2}m $ vertices in $G_K$.

Overall, this implies that $G_2$ contains a collection $\mathcal P$ of at most $\ell/(2C+1) \leq M$ vertex-disjoint $2C$-paths, that together cover all but at most 
\begin{align}
\label{cover}\left ( (2C+1)\eps ^{1/2}m \times \frac{\ell}{2C+1} \right )+ \left ({\eta _3 \ell} \times m \right )+|V_0| \leq \eps ^{1/2} n +\eta _3 n+\eps n\stackrel{(\ref{hier})}{\leq} {2 \eta _3 n}
\end{align}
vertices in $G_2$.

We will now use vertices in $G_1$ to connect together all of the $2C$-paths in $\mathcal P\cup \{K^S_{2C+1} , K^E_{2C+1}\}$ to obtain an $r$-path in $G$ whose  first $2C+1$ vertices are the vertices of $K^E_{2C+1}$ and whose last 
$2C+1$ vertices are the vertices of $K^S_{2C+1}$.
Note that we will have to reorder some of the vertices in the $2C$-paths in $\mathcal P$, so that is one reason why we `drop' from $2C$-paths to an $r$-cycle.
Label the $2C$-paths in $\mathcal P\cup \{K^S_{2C+1} , K^E_{2C+1}\}$ by $P_1, \dots, P_t$, where $P_1:=K^E_{2C+1}$ and $P_t := K^S_{2C+1}$.
In particular, note  $M'/4C \leq t \leq M+2$.

For each $P_i$, let $S_i$ denote the copy of $K_C$ induced by the first $C$ vertices on $P_i$; let $E_i$ denote the copy of $K_C$ induced by the last $C$ vertices on $P_i$; and let $P'_i$ denote the $2C$-path obtain from $P_i$ by deleting all vertices from $S_i$ and $E_i$. (Note that $P'_i$ is certainly non-empty.)
 
\begin{claim}\label{helpful}
Let $W\subseteq V(G_1)$ be arbitrary so that $|W|\leq \eps n'$. Given any $1 \leq i \leq t-1$, there is an $r$-path $P$ in $G$ so that:
\begin{itemize} 
\item[(i)] $V(P)\cap V(G_2) =E_i \cup S_{i+1}$;
\item[(ii)] $|V(P)\cap V(G_1)|=r $;
\item[(iii)] The first $C$ vertices on $P$ are precisely the vertices from $E_i$; 
\item[(iv)] The last $C$ vertices on $P$ are precisely the vertices from $S_{i+1}$;
\item[(v)] $P$ is disjoint from $W$.
\end{itemize}
\end{claim}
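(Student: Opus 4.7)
The plan is to invoke Lemma~\ref{ingredient} inside the reservoir $V'=V(G_1)$, using $E_i$ and $S_{i+1}$ as the two cliques to be joined, and then to assemble the resulting $r$-clique $Z\subseteq V'$ together with $E_i$ and $S_{i+1}$ into the desired $r$-path $P$.

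Concretely, I will apply Lemma~\ref{ingredient} with $U:=V'$, with parameter $\eta^*:=\eta_3$ playing the role of the lemma's $\eta$, and with $X:=E_i$, $Y:=S_{i+1}$, and the given $W$. The hypotheses are met by the choices made up to this point in the proof: $|V'|=\eta_3 n=\eta^* n\ge \eta^* n/2$; the graph $G_1$ is $(\rho^{1/2},d)$-dense and $\rho^{1/2}\ll d,\eta^*,1/r$ by the hierarchy~(\ref{hier}); inequality~(\ref{degV'}) together with $\eta_3\ll\eta$ yields $d_G(x,V')\ge (1/2+\eta^*)n'$ for every $x\in V(G)$; the sets $E_i$ and $S_{i+1}$ each have exactly $C=\lceil 4r/\eta_3\rceil=\lceil 4r/\eta^*\rceil$ vertices, as demanded; and $|W|\le \eps n'\le \eta^* n'/2=\eta_3^2 n/2$ since $\eps\ll\eta_3$. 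The sets $E_i,S_{i+1},W$ are pairwise disjoint because $E_i,S_{i+1}$ are contained in $V(G)\setminus V'$ while $W\subseteq V'$, and $E_i,S_{i+1}$ are themselves disjoint (they lie in different $2C$-paths from the collection $\mathcal{P}\cup\{K^S_{2C+1},K^E_{2C+1}\}$).

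Lemma~\ref{ingredient} then returns an $r$-set $Z\subseteq V'$ inducing a clique in $G$, disjoint from $E_i\cup S_{i+1}\cup W$, together with subsets $X'\subseteq E_i$ and $Y'\subseteq S_{i+1}$ of size $r$ each such that $Z\subseteq N_G(X')\cap N_G(Y')$. I construct $P$ by listing the vertices of $E_i$ with the $C-r$ vertices of $E_i\setminus X'$ first and $X'$ in the last $r$ positions (internal orderings arbitrary), then any ordering of $Z$, then the vertices of $S_{i+1}$ with $Y'$ in the first $r$ positions and $S_{i+1}\setminus Y'$ afterward. Each of the four consecutive blocks $E_i$, $X'\cup Z$, $Z\cup Y'$, $S_{i+1}$ in this enumeration induces a clique in $G$ (the middle two by the adjacency property provided by the lemma). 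Any two vertices of $P$ whose positions differ by at most $r$ lie together inside one of these four clique blocks and are therefore adjacent; vertices in $E_i\setminus X'$ and in $Z$ are at distance at least $r+1$ in the ordering, as are those in $Z$ and $S_{i+1}\setminus Y'$, and those in $E_i$ and $S_{i+1}$. Hence $P$ is an $r$-path on $2C+r$ vertices.

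Properties~(i)--(v) are immediate from the construction: (iii) and (iv) are built in by design; (ii) holds since $V(P)\cap V(G_1)=Z$ has size exactly $r$; (v) holds because $Z\cap W=\emptyset$ by the lemma while $E_i\cup S_{i+1}$ lies outside $V'\supseteq W$; and (i) follows from $V(P)=E_i\cup Z\cup S_{i+1}$ together with $Z\subseteq V(G_1)$ being disjoint from $V(G_2)$. The only real obstacle in the argument is parameter bookkeeping---in particular, choosing $\eta^*=\eta_3$ so that the clique sizes $|E_i|=|S_{i+1}|=C$ match $\lceil 4r/\eta^*\rceil$ exactly while the hypothesis on $|W|$ remains satisfied. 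Once the right application of Lemma~\ref{ingredient} is set up, the assembly of $P$ from its output is purely combinatorial.
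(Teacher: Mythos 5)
Your proposal is correct and follows exactly the paper's own route: both apply Lemma~\ref{ingredient} inside the reservoir $V'=V(G_1)$ with $E_i,S_{i+1},W$ in the roles of $X,Y,W$ and $\eta_3$ as the small density parameter, extract an $r$-clique $Z$ with the two size-$r$ neighbouring sets $X'\subseteq E_i$, $Y'\subseteq S_{i+1}$, and assemble the $r$-path as $(E_i\setminus X')X'ZY'(S_{i+1}\setminus Y')$. Your write-up is slightly more explicit about checking the hypotheses of Lemma~\ref{ingredient} and about verifying the $r$-path property block by block, but there is no genuine difference in approach.
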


\begin{claimproof}
Apply Lemma~\ref{ingredient} with $G,V',n',\eta _3,\sqrt{\rho},d,E_i,S_{i+1},W,r$ playing the roles of $G,U,n',\eta,\rho,d,X,$ $Y,W,r$ to obtain a copy $K$ of $K_r$ in $G_1=G[V']$ such that $V(K) \cap W = \emptyset$ (recall that $E_i \cup S_{i+1}$ is disjoint from $V'$), and there exist $E'_i \subseteq E_i$ and $S'_{i+1} \subseteq S_{i+1}$ such that $|E'_i|=|S'_{i+1}|=r$ and $E'_i \cup S'_{i+1} \subseteq N_G(K)$.

Altogether this implies that $G_1$ contains the desired $r$-path $P$. Indeed, we construct $P$ so that the first $C-r$ vertices on $P$ are those vertices in $E_i\setminus E'_i$ (in an arbitrary order); the next $r$ vertices are the elements from $E'_i$; after that we take the vertices from $K$; then from $S'_{i+1}$; the final $C-r$ vertices on $P$ are from $S_{i+1}\setminus S'_{i+1}$.
\end{claimproof}

\medskip
\noindent
With Claim~\ref{helpful} to hand it is now easy to complete the proof of the theorem.
Suppose for some $ j < t-1$ we have defined vertex-disjoint $r$-paths $P^*_1,\dots, P^* _j$ such that, for each $i\leq j$, $P=P^*_i$ satisfies (i)--(iv) in Claim~\ref{helpful}.
Then define $W$ to be all those vertices in an $r$-path $P^*_1,\dots, P^* _j$ that lie in $G_1$. So $|W|=jr\leq (M+2)r \leq \eps n'$. Claim~\ref{helpful} then implies there is an $r$-path $P^*_{j+1}$ in $G$ that satisfies the conclusion of Claim~\ref{helpful} (where
$j+1$ plays the role of $i$ and $P^*_{j+1}$ the role of $P$).

Thus, we obtain vertex-disjoint $r$-paths $P^*_1,\dots, P^* _t$ such that, for each $i\leq t$, $P=P^*_i$ satisfies (i)--(iv) in Claim~\ref{helpful}.
Consider the concatenation 
$$P^*:= S_1P'_1P^*_1P'_2P^*_2\dots P'_{t-1} P^*_{t-1}P'_tE_t.$$
This induces an $r$-path in $G$ (with many additional edges).
Further, note that by the definition of $P_1$ (and thus $S_1$), the first $C$ vertices on $P^*$ lie in $K^E_{2C+1}$, and so are adjacent in $G$ to every vertex in $E$.
Similarly, the last $C$ vertices on $P^*$ lie in $K^S_{2C+1}$, and so are adjacent in $G$ to every vertex in $S$.
Thus, if we concatenate $P^*$ together with $P_{abs}$ we obtain an $r$-cycle $C^*$ in $G$ (with many additional edges).

Note that, by~(\ref{cover}), $C^*$ covers every vertex in $G$ except for at most $2\eta _3 n$ vertices in $G_2$ and at most $n'=\eta _3 n$ vertices in $G_1$. 
Since $3 \eta _3 n< \eta _2 n$, we may use the absorbing property (Claim~\ref{pabs}(ii)) of $P_{abs}$ to obtain the $r$th power of a Hamilton cycle in $G$, as required.\qed


\section{Lemmas for $H$}\label{sec6}
Our rough aim is to find `compatible' partitions of the vertex sets of $G$ and of $H$ that allow us to apply the embedding lemmas (Lemmas~\ref{target} and~\ref{blowlem}) to complete the embedding of $H$ into $G$.
In this section we state and prove the so-called \emph{Lemmas for $H$}, whose input is some information about the structure of $G$, and whose output is a suitable partition of $H$.

\subsection{Partitioning a graph of low bandwidth: the basic lemma for $H$}
At some stage of the proof, $G$ will return some `ideal' part sizes $\lbrace m_{i,j} : (i,j) \in [\ell]\times[2r]\rbrace$, where $\chi(H) \leq r$. We would then like to find a suitable partition of $H$ the parts of which are close to these ideal sizes (equivalently, a mapping $f$ from $V(H)$ into $[\ell]\times[2r]$ whose pre-images have controlled size). This is the purpose of the next lemma. 
It guarantees that $f$ is a graph homomorphism into $Z^{2r}_\ell$ and produces a small set $B$ such that $f$ restricted to $V(H)\setminus B$ is a graph homomorphism into a $K_{2r}$-factor (this is~$(\mathscr{B}3)$).
Further,~$(\mathscr{B}4)$ says that for  the first few vertices of $H$ (with respect to the bandwidth ordering of $H$), we have control of their images.

Before stating and proving Lemma~\ref{lemmaforH1}, we would like to compare it to Lemma~8 in~\cite{bot}, the Lemma for $H$ in the Bandwidth theorem. There, the assumptions on $H$ are the same (in fact slightly weaker), and the graph $Z^{2r}_\ell$ mentioned above is replaced by a given graph $R$ of large minimum degree which contains a spanning subgraph $S$ (very similar to $Z^{r}_\ell$), which in turn contains a $K_r$-factor. Most edges are (and must be) mapped to the $K_r$-factor, which is much sparser than the $K_{2r}$-factor we have at our disposal. This means that the proof of Lemma~8 in~\cite{bot} is much harder to prove than our Lemma~\ref{lemmaforH1}. Despite this, our lemma does not follow from the statement of Lemma~8 in~\cite{bot}, so we prove it here.

\begin{lemma}[Basic Lemma for $H$]\label{lemmaforH1}
Let $n,r,\ell,\Delta \geq 1$ be integers and let $\beta>0$ be such that $0 < 1/n \ll 1/r,1/\ell,1/\Delta,\beta$.
Let $H$ be a graph on $n$ vertices with $\Delta(H) \leq \Delta$ and assume that $H$ has a labelling $x_1,\ldots,x_n$ of bandwidth at most $\beta n$ and $\chi(H) \leq r$.
Furthermore, suppose $\lbrace m_{i,j} : (i,j)\in[\ell]\times[2r]\rbrace$ is such that $\sum_{(i,j) \in [\ell]\times[2r]}m_{i,j}=n$; $m_{i,j} \geq 10\beta n$ for all $(i,j)\in[\ell]\times[2r]$; and $|m_{i,j}-m_{i,j'}| \leq 1$ whenever $i \in [\ell]$ and $j,j'\in[2r]$.
Let $\chi : V(H) \rightarrow [r]$ be a proper colouring of $H$.
Then there exists a mapping $f:V(H)\rightarrow [\ell]\times[2r]$ and a set of special vertices $B \subseteq V(H)$ with the following properties:
\begin{itemize}
\item[$(\mathscr{B}1)$] $B \cap \lbrace x_1,\ldots,x_{\beta n}\rbrace = \emptyset$ and $|B| \leq 2\ell\beta n$;
\item[$(\mathscr{B}2)$] $\left| |f^{-1}(i,j)| - m_{i,j}\right| \leq 10\beta n$ for every $(i,j) \in [\ell]\times[2r]$;
\item[$(\mathscr{B}3)$] for every edge $uv\in E(H)$, writing $f(u)=:(i,j)$ and $f(v)=:(i',j')$, we have $|i-i'|\leq 1$ and $j \neq j'$. If additionally $u,v \notin B$, then $i=i'$;
\item[$(\mathscr{B}4)$] for all $s \leq \beta n$ we have $f(x_s)=(1,\chi(x_s))$.
\end{itemize}
In particular, $f$ yields a homomorphism from $H$ to $Z^{2r}_{\ell}$.
\end{lemma}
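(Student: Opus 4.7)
The plan is to slice the bandwidth ordering $x_1,\ldots,x_n$ of $H$ into $\ell$ consecutive intervals $C_1,\ldots,C_\ell$ of lengths $M_i := \sum_{j \in [2r]} m_{i,j}$, to assign chunk $C_i$ to the super-node $i$ of $Z^{2r}_\ell$, and to build the $j$-coordinate by a colour-class halving inside each chunk. Since $m_{i,j} \geq 10\beta n$ we have $M_i \geq 20r\beta n \gg \beta n$, so the bandwidth condition forces every edge of $H$ either to lie inside a single chunk or to cross a single chunk-boundary with both endpoints within $\beta n$ of that boundary. The border set $B$ will be the vertices lying within $\beta n$ (in the ordering) of some chunk-boundary, with $\{x_1,\ldots,x_{\beta n}\}$ removed; this gives $|B| \leq 2\ell\beta n$ and $B \cap \{x_1,\ldots,x_{\beta n}\} = \emptyset$, delivering $(\mathscr{B}1)$.

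For the map $f$, I would first set $f(x_s) := (1,\chi(x_s))$ whenever $s \leq \beta n$, which enforces $(\mathscr{B}4)$ directly. For every other vertex $v \in C_i$ of colour $k := \chi(v)$, I would set $f(v) \in \{(i,k),(i,k+r)\}$, partitioning each class $C_i \cap \chi^{-1}(k)$ (listed in bandwidth order) into two consecutive pieces sent to $(i,k)$ and $(i,k+r)$ respectively. Because the pairs $\{k,k+r\}_{k \in [r]}$ are pairwise disjoint and $\chi$ is proper, the two endpoints of any edge $uv$ of $H$ receive distinct $j$-coordinates; the bandwidth bound forces $|i - i'| \leq 1$; and if $u,v \notin B$ both lie in the interior of a single chunk, forcing $i = i'$. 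These observations yield $(\mathscr{B}3)$ and show that $f$ is a homomorphism into $Z^{2r}_\ell$.

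The heart of the proof is $(\mathscr{B}2)$. After the halving, $|f^{-1}(i,k)|$ equals the split position within colour $k$ of chunk $C_i$ and $|f^{-1}(i,k+r)|$ is the remainder of $C_i \cap \chi^{-1}(k)$; the target for each is approximately $M_i/(2r)$. If the class sizes $n_{i,k} := |C_i \cap \chi^{-1}(k)|$ are close to $M_i/r$ for every $i,k$, splitting at position $m_{i,k}$ lands within the $10\beta n$ tolerance. The given $\chi$ need not enjoy this local balance, so before constructing $f$ I would produce a proper $r$-colouring $\chi'$ that agrees with $\chi$ on $\{x_1,\ldots,x_{\beta n}\}$ and satisfies $|C_i \cap {\chi'}^{-1}(k)| = M_i/r \pm O(\beta n)$ for all $i,k$. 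Such a $\chi'$ is obtained by a careful greedy traversal of the bandwidth ordering that, at each step, selects an admissible colour which is currently underrepresented in the enclosing chunk, using the bandwidth bound to keep the forbidden-colour set small at each step and the chunk sizes $M_i \gg \beta n$ to allow enough slack for balance.

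The main obstacle is the construction of this balanced recolouring $\chi'$; everything else (the chunk decomposition, the homomorphism check, and the bound on $|B|$) is routine bookkeeping driven by the bandwidth inequality. It is precisely in $(\mathscr{B}2)$ that the hypothesis $\chi(H) \leq r$ must be combined with the bandwidth bound to furnish the required quantitative balance, and the doubling of the palette from $r$ to $2r$ is exactly what provides the flexibility (through the halving) to align with the target sizes $m_{i,j}$.
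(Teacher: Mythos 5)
Your high-level plan (slice $V(H)$ into chunks $C_i$ of size $M_i$, map chunk $i$ to super-node $i$ of $Z^{2r}_\ell$, use the interior of the chunks to guarantee $(\mathscr{B}1)$ and the chunk-local part of $(\mathscr{B}3)$) matches the paper, and the bookkeeping for $(\mathscr{B}1)$, $(\mathscr{B}4)$ and the homomorphism check is fine. The critical step, however, is flawed: you reduce $(\mathscr{B}2)$ to the existence of a proper $r$-colouring $\chi'$ that is \emph{locally balanced}, i.e.\ $|C_i \cap (\chi')^{-1}(k)| = M_i/r \pm O(\beta n)$ for all $i,k$, and you propose to obtain it by a greedy recolouring. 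Such a $\chi'$ need not exist. Take $r=2$, $\Delta \geq 3$ and $H$ a connected caterpillar whose spine is $v_1,\ldots,v_m$ with $\Delta-2$ pendant leaves attached to each odd-indexed spine vertex, ordered spine-vertex-then-its-leaves; this has bandwidth $\leq \Delta$, maximum degree $\Delta$ and $\chi(H)=2$, and any proper $2$-colouring has classes of sizes $\approx n/\Delta$ and $\approx (\Delta-1)n/\Delta$, globally and inside every chunk (because the $2$-colouring of a connected bipartite graph is unique up to a global swap). No local recolouring — greedy or otherwise — can repair this imbalance; it is structurally forced. Consequently, splitting each colour class $C_i \cap (\chi')^{-1}(k)$ into pieces of size $m_{i,k}$ and $n_{i,k}-m_{i,k}$ cannot land both pieces near $M_i/(2r)$, and $(\mathscr{B}2)$ fails.

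The paper avoids this by never asking for a balanced $r$-colouring at all. It builds a $2r$-colouring $\chi'$ whose palette depends on the \emph{position} of the vertex in the bandwidth ordering, not only on $\chi$: consecutive $\beta n$-blocks of the ordering alternate between the palettes $[r]$ and $[2r]\setminus[r]$ (this is legal precisely because edges of $H$ span at most two consecutive $\beta n$-blocks). Balance of the colour counts within $[r]$ (and separately within $[2r]\setminus[r]$) is then achieved dynamically: at each step the running counts are sorted decreasingly while the counts in the new pair of blocks are sorted increasingly before the palettes are merged, so the cumulative imbalance never grows past $\beta n$. This gives $|f^{-1}(i,j)| = M_i/(2r) \pm O(\beta n)$ without any assumption about the colour-class sizes of the original $\chi$. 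Your halving scheme ties the image of $v$ to $\chi(v)$ up to a binary choice, so it can never break the forced imbalance of the underlying $r$-colouring; the block-alternation in the paper is precisely the extra degree of freedom that makes the doubling of the palette useful.
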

Note that the graph $Z^{2r}_\ell$ which appears in Lemma~\ref{lemmaforH1} will be found in the reduced graph $R$ of $G$: since $G$ is locally dense, $R$ is also locally dense (see Lemma~\ref{inherit}) and thus, by Theorem~\ref{rcycle}, we can find a spanning $(4r-1)$-cycle in $R$, which contains $Z^{2r}_\ell$ (see~(\ref{CZ})).

Recall that each vertex in  $R$ corresponds to a unique cluster in $G$.
In the proof of Theorem~\ref{main}, the homomorphism $f$ from $H$ to $Z^{2r}_{\ell}\subseteq R$ will be a guide  as to which cluster in $G$ we should embed a vertex $x$ into for \emph{most} vertices $x\in V(H)$.
That is, roughly speaking, if $f(x)=(i,j)\in V(R)$, we embed $x$ into the cluster in $G$ corresponding to $(i,j)$. 
Note though that $f$ does not `guide' us as to which vertices from $H$ we should embed into the exceptional set $V_0$ of $G$.
So in the proof of Theorem~\ref{main} we in fact apply Lemma~\ref{lemmaforH1} to an almost spanning subgraph of $H$, rather than $H$ itself; the remaining part of $H$ is then embedded into $G$ 
via an additional Lemma for $H$ (Lemma~\ref{lemmaforH} in Section~\ref{special}). In particular, Lemma~\ref{lemmaforH} governs which vertices from $H$ are embedded into $V_0$.
Property $(\mathscr{B}4)$ of the homomorphism $f$ is used to ensure we can `fit' the two Lemmas for $H$ together to complete the embedding of $H$ into $G$.

The idea of the proof of Lemma~\ref{lemmaforH1} is to first obtain a proper $2r$-colouring $\chi '$ of $H$ such that in any initial segment $x_1,\dots, x_t$ of the bandwidth ordering of $H$, every colour is used roughly the same number of times in $\chi '$.
This then allows us to define $f$ in a sequential way. That is, for some $t_1$ we map each $x_j$ in $\{x_1,\dots,x_{t_1} \}$ to $(1,\chi '(x_j))$; then for some $t_2$ we map each $x_j$ in $\{x_{t_1+1},\dots,x_{t_2}\}$ to $(2,\chi '(x_j))$, and so on.

\smallskip

{\noindent \it Proof of Lemma~\ref{lemmaforH1}.}
Let $N := \lceil 1/(2\beta)\rceil$ and
partition the ordered vertices $x_1,\ldots,x_n$ into consecutive intervals $A_1,A_2,\ldots,A_{2N}$ each of length $\beta n$ (except possibly $A_{2N}$ which could be smaller). 
\COMMENT{AT: deleted definition of $A_i^j$ as don't think it is used}
We view each interval as being ordered with the inherited bandwidth ordering.

We will first define a (proper) $2r$-colouring $\chi' : V(H) \rightarrow [2r]$ by iteratively defining colourings $\chi'_i$ for $i \in [N]$ with the following properties:
\begin{itemize}
\item[$\mathscr{P}_1(i)$] $\chi'_i : \bigcup_{2 \leq t \leq 2i}A_t \rightarrow [2r]$ is a proper colouring of $H[\bigcup_{2 \leq t \leq 2i}A_t]$;
\item[$\mathscr{P}_2(i)$] for all odd $2 \leq t \leq 2i$ we have $\chi'_i(A_t) \subseteq [r]$ and for all even $2 \leq t \leq 2i$ we have $\chi'_i(A_t) \subseteq [2r]\setminus[r]$;
\item[$\mathscr{P}_3(i)$] writing $b_i^j(s) := |\lbrace x \in \bigcup_{2 \leq t \leq 2s}A_t : \chi'_i(x)=j\rbrace|$ for all $j \in [2r]$ and $s \in [i]$, we have $|b_i^j(s)-b_i^{j'}(s)| \leq \beta n$ for all $(j,j')\in[[2r]]^2$ and $s \in [i]$.
\end{itemize}
For $\mathscr{P}_3(i)$ recall that $[[2r]]^2:=[r]^2\cup ([2r]\setminus [r])^2$.
Define $\chi'_1 : A_2 \rightarrow [2r]$ by setting $\chi'_1(x) = \chi(x)+r$. Clearly this satisfies $\mathscr{P}_1(1)$--$\mathscr{P}_3(1)$, in particular as $|A_2| \leq \beta n$.
Suppose we have defined $\chi'_i$ for some $i < N$ satisfying $\mathscr{P}_1(i)$--$\mathscr{P}_3(i)$.
By permuting the sets of colours $[r]$ and $[2r]\setminus[r]$, we can obtain a new proper $2r$-colouring $c_1$ of $H[\bigcup_{2 \leq t \leq 2i} A_t]$ satisfying $\mathscr{P}_1(i)$--$\mathscr{P}_3(i)$ and with the additional property that
\begin{equation}\label{c1}
|c_1^{-1}(1)| \geq \ldots \geq |c_1^{-1}(r)|\quad\text{and}\quad |c_1^{-1}(r+1)| \geq \ldots \geq |c_1^{-1}(2r)|.
\end{equation}

Define $k : A_{2i+1} \cup A_{2i+2} \rightarrow [2r]$ by setting
\begin{equation*}\label{kdef99} 
k(x) = \begin{cases} \chi(x) &\mbox{if } x \in A_{2i+1} \\
\chi(x)+r & \mbox{if } x \in A_{2i+2}. \end{cases} 
\end{equation*}
Clearly $k$ is a proper colouring of $H[A_{2i+1}\cup A_{2i+2}]$ since $\chi$ is.
By permuting the sets of colours $[r]$ and $[2r]\setminus[r]$, we can obtain a new proper colouring $c_2$ of $H[A_{2i+1}\cup A_{2i+2}]$ from $k$ such that
\begin{equation}\label{c2}
|c_2^{-1}(1)| \leq \ldots \leq |c_2^{-1}(r)|\quad\text{and}\quad |c_2^{-1}(r+1)| \leq \ldots \leq |c_2^{-1}(2r)|
\end{equation}
(note that the ordering is reversed compared to~(\ref{c1})).
Finally, define $\chi'_{i+1}$ by setting
\begin{equation}\label{chidef}
\chi'_{i+1}(x) = \begin{cases} c_1(x) &\mbox{if } x \in \bigcup_{2 \leq t \leq 2i}A_t \\
c_2(x) & \mbox{if } x \in A_{2i+1}\cup A_{2i}. \end{cases} 
\end{equation}
The fact that $\mathscr{P}_2(i+1)$ holds is clear from $\mathscr{P}_2(i)$ and the definitions of $c_1$, $k$, $c_2$ and $\chi'_{i+1}$.

To see that $\mathscr{P}_1(i+1)$ holds, let $x,y \in \bigcup_{2 \leq t \leq 2i+2}A_t$ where $xy \in E(H)$.
We need to show that $\chi'_{i+1}(x) \neq \chi'_{i+1}(y)$.
Let $2 \leq t,t' \leq 2i+2$ be such that $x \in A_t$ and $y \in A_{t'}$. Then $|t-t'|\leq 1$ since the intervals $A_j$ respect the bandwidth ordering and each one (except perhaps $A_{2N}$) has size $\beta n$. If $|t-t'|=1$, then $\mathscr{P}_2(i+1)$ implies that one of $\chi'_{i+1}(x),\chi'_{i+1}(y)$ lies in $[r]$ and the other in $[2r]\setminus[r]$, as required.
So we may assume that $t=t'$. If $2 \leq t \leq 2i$, then $(\chi'_{i+1}(x),\chi'_{i+1}(y))=(c_1(x),c_1(y))$. But $c_1$ is a proper colouring since it was obtained from $\chi'_i$ by permuting colours, and $\chi'_i$ is a proper colouring by $\mathscr{P}_1(i)$.
Suppose that $t \in \lbrace 2i+1,2i+2\rbrace$. Then similarly $(\chi'_{i+1}(x),\chi'_{i+1}(y))=(c_2(x),c_2(y))$, and $c_2$ is a proper colouring since it was obtained from the proper colouring $k$ by permuting colours.
Thus $\mathscr{P}_1(i+1)$ holds.

For $\mathscr{P}_3(i+1)$, define for $j \in [2r]$ and $s \in [i+1]$
$$
b_{i+1}^j(s) := \left|\left\lbrace x \in \bigcup_{2 \leq t \leq 2s}A_t : \chi'_{i+1}(x)=j\right\rbrace\right|
$$
and let $b_{i+1}^j := b_{i+1}^j(i+1) = |(\chi'_{i+1})^{-1}(j)|$. Then (\ref{chidef}) implies that $b_{i+1}^j=|c_1^{-1}(j)|+|c_2^{-1}(j)|$ for all $j \in [2r]$. 
Now let $(j,j')\in[[2r]]^2$.
Clearly $|b_{i+1}^j(s)-b_{i+1}^{j'}(s)| \leq \beta n$ for all $s \in [i]$ since this is true for $\chi'_i$ and hence $c_1$.
So it remains to show that $|b_{i+1}^j-b_{i+1}^{j'}| \leq \beta n$.
Equations~(\ref{c1}) and~(\ref{c2}) imply that
 the quantities $|c_1^{-1}(j)|-|c_1^{-1}(j')|$ and $|c_2^{-1}(j)|-|c_2^{-1}(j')|$ are never both positive, and never both negative, since $j$ and $j'$ are in different orders. This implies that\COMMENT{please check my changes here}
\begin{align*}
|b_{i+1}^j-b_{i+1}^{j'}| & = \left| |c_1^{-1}(j)|-|c_1^{-1}(j')| + |c_2^{-1}(j)|-|c_2^{-1}(j')|\right| \\ & \leq \max \left \{ \left| |c_1^{-1}(j)|-|c_1^{-1}(j')|\right|,
\left| |c_2^{-1}(j)|-|c_2^{-1}(j')|\right|
 \right \}.
\end{align*}
Note that $\left| |c_2^{-1}(j)|-|c_2^{-1}(j')|\right|\leq \beta n$.
Further, $c_1$ was obtained from $\chi'_i$ by permuting colours in $[r]$ and in $[2r]\setminus[r]$, so there is some $(q,q') \in [[2r]]^2$ for which $c_1^{-1}(j)=(\chi'_i)^{-1}(q)$ and $c_1^{-1}(j')=(\chi'_i)^{-1}(q')$.
Thus $\left| |c_1^{-1}(j)|-|c_1^{-1}(j')|\right| = |b_i^q(i)-b_i^{q'}(i)|$ which is at most $\beta n$ by $\mathscr{P}_3(i)$.
Thus $\mathscr{P}_3(i+1)$ holds.

Therefore we can obtain a colouring $\chi'_N : V(H) \setminus A_1 \rightarrow [2r]$ satisfying $\mathscr{P}_1(N)$--$\mathscr{P}_3(N)$.
Finally, define $\chi' : V(H) \rightarrow [2r]$ by setting
\begin{equation}\label{chiprimedef}
\chi'(x) = \begin{cases} \chi'_N(x) &\mbox{if } x \in V(H)\setminus A_1 \\
\chi(x) & \mbox{if } x \in A_{1}. \end{cases} 
\end{equation}
The following properties hold:
\begin{itemize}
\item[(i)] $\chi' : V(H) \rightarrow [2r]$ is a proper colouring;
\item[(ii)] for all odd $t \in [2N]$ we have $\chi'(A_t) \subseteq [r]$ and for all even $t\in[2N]$ we have $\chi'(A_t) \subseteq [2r]\setminus[r]$;
\item[(iii)] writing $d^j(s) := |\lbrace x \in \bigcup_{t \in [s]}A_t : \chi'(x)=j\rbrace|$\COMMENT{note change in definition}
 for all $j \in [2r]$ and $s \in [2N]$, we have $|d^j(s)-d^{j'}(s)| \leq 2\beta n$ for all $(j,j')\in[[2r]]^2$ and $s \in [2N]$.
\end{itemize}

\smallskip

Let $M_0=n_0 := 0$.
For all $i \in [\ell]$, let $M_i := \sum_{j \in [2r]}m_{i,j}$; and $n_i := \sum_{t \in [i]}M_t$.
(Note that $n_\ell =n$.)
Let $B_i := \lbrace x_{n_{i-1}+1},\ldots, x_{n_i}\rbrace$.
So $B_1,\ldots,B_{\ell}$ is a partition of $V(H)$ which respects the bandwidth ordering, and each interval inherits the bandwidth ordering.
Let
$$
B := \bigcup_{2 \leq i \leq \ell}\lbrace x_{n_{i-1}+1},\ldots, x_{n_{i-1}+\beta n}\rbrace \cup \bigcup_{1 \leq i \leq \ell-1} \lbrace x_{n_{i}-\beta n+1},\ldots, x_{n_i}\rbrace,
$$
and define $f : V(H) \rightarrow [\ell]\times[2r]$ by setting
\begin{equation}\label{fmapdef}
f(x) := (i,\chi'(x)) \text{ if } x \in B_i. 
\end{equation}
We claim that $f$ is the required mapping.
Note $|B| = 2(\ell-1)\beta n$, 
and if $t \leq n_1-\beta n$, then $x_t \notin B$.
But $n_1-\beta n \geq 9\beta n$, so certainly $\lbrace x_1,\ldots,x_{\beta n}\rbrace \cap B = \emptyset$.
Hence, $(\mathscr{B}1)$ holds.
To show~$(\mathscr{B}2)$, fix $i \in [\ell]$. Choose the smallest $p^- \in [2N]$ such that the first element of $A_{p-}$ lies in $B_i$, and the largest $p^+ \in [2N]$ such that the last element of $A_{p^+}$ lies in $B_i$.
So $B_i$ is the union of $\bigcup_{p^- \leq t \leq p^+}A_t$ together with a proper subset of $A_{p^--1}$ and a proper subset of $A_{p^++1}$.
Thus,
\begin{equation}\label{f1eq}
|f^{-1}(i,j)| = |\lbrace x \in B_i : \chi'(x)=j\rbrace| = d^j(p^+) - d^j(p^--1) \pm (|A_{p^--1}|+|A_{p^++1}|)
\end{equation}
for all $j \in [2r]$.
Let $(j,j')\in[[2r]]^2$. Then the sizes of $f^{-1}(i,j)$ and $f^{-1}(i,j')$ do not differ much:
\begin{equation}\label{fdiff}
|f^{-1}(i,j)-f^{-1}(i,j')| \stackrel{(\ref{f1eq})}{\leq} \left| d^j(p^+)  - d^{j'}(p^+) \right| + \left|  d^j(p^--1) - d^{j'}(p^--1)\right| + 4\beta n \stackrel{(iii)}{\leq} 8\beta n.
\end{equation}
For any fixed $i \in [\ell]$,
$$
S_1 := \sum_{j \in [r]}|f^{-1}(i,j)| \stackrel{(ii),(\ref{fmapdef})}{=} \sum_{t \text{ odd}}|B_i \cap A_t|\quad\text{and}\quad S_2 := \sum_{j \in [2r]\setminus[r]}|f^{-1}(i,j)|=\sum_{t \text{ even}}|B_i \cap A_t|.
$$ 
Therefore $S_1+S_2=|B_i|=M_i$ and $\left|S_1-S_2\right| \leq \beta n$. So $S_1,S_2 = M_i/2 \pm \beta n$. By definition of the $m_{i,j}$ and $M_i$ we have that $|m_{i,j}-M_i/(2r)| \leq 1$ for all $j \in [2r]$.
Now let $j \in [r]$. We have
\begin{align*}
\left| |f^{-1}(i,j)|-m_{i,j}\right| &\leq \left| |f^{-1}(i,j)|-M_i/(2r)\right| + 1 \leq \frac{1}{r}\left| r|f^{-1}(i,j)| - S_1 \right|  + 2\beta n\\
&\leq \frac{1}{r}\sum_{j' \in [r]}\left| |f^{-1}(i,j)|-|f^{-1}(i,j')|\right| + 2\beta n \stackrel{(\ref{fdiff})}{\leq} 10\beta n,
\end{align*}
as required. The case when $j \in [2r]\setminus[r]$ is almost identical.
Thus~$(\mathscr{B}2)$ holds.

Now let $uv \in E(H)$ and write $f(u)=:(i,j)$ and $f(v)=:(i',j')$ for $i,i'\in[\ell]$ and $j,j'\in[2r]$.
Since $|B_t| > \beta n$ for all $t \in [\ell]$ and $u \in B_i$ and $v \in B_{i'}$, we have that $|i-i'| \leq 1$ by consideration of the bandwidth ordering.
We also have $j=\chi'(u)$ and $j'=\chi'(v)$, and $\chi'$ is a proper colouring of $H$, so $j \neq j'$.
Suppose additionally that $u,v \notin B$.
If $i \neq i'$, then $u$ and $v$ are separated by at least $2\beta n$ in the bandwidth ordering, so $uv \not\in E(H)$, a contradiction.

Finally, if $s \leq \beta n$, then $x_s \in A_1 \cap B_1$. So $f(x_s)=(1,\chi'(x))=(1,\chi(x))$ by~(\ref{chiprimedef}). So~$(\mathscr{B}4)$ holds.
\qed

\subsection{Covering exceptional vertices: the second lemma for $H$}\label{special}

The second lemma for $H$ will be used to find an embedding of a short initial segment of $H$ (in bandwidth ordering) into $G$ such that the exceptional set $V_0$, obtained after applying the Regularity lemma, lies in the image of this embedding.
In fact the pre-image of $V_0$ will be a $2$-independent set, which exists because $H$ has small maximum degree and bandwidth.
As well as embedding this initial segment, we would like to find target sets for its neighbours so that eventually we can extend this embedding to the whole of $H$.

\begin{lemma}[Special Lemma for $H$]\label{lemmaforH}
Let $n,r,L \geq 1$ be integers and let 
$0 < 1/n \ll \beta \ll 1/L \ll \eps  \ll \rho \ll \eta \ll d, 1/r,1/\Delta$.\COMMENT{AT: removed $\delta$ from hierarchy -- don't think it is used. Added $1/\Delta$}
\COMMENT{AT: Important for Lemma 15 that it is $\eta \ll d,1/r$ here}
Let $G$ be an $n$-vertex graph, $R$ an $L$-vertex graph and $\lbrace b_1,\ldots,b_r\rbrace \subseteq V(R)$ be such that
\begin{itemize}
\item[$(\mathscr{G}1)$] $G$ has vertex partition $\lbrace V_0\rbrace  \cup \lbrace V_a : a \in V(R)\rbrace$ where $|V_0| \leq \eps n$ and $|V_a|=:m$ for all $a \in V(R)$;
\item[$(\mathscr{G}2)$] each $v \in V_0$ is equipped with a subset $N_v \subseteq V(R)$ with $|N_v| \geq \eta L$
\COMMENT{AT: changed notation as $N(v)$ could get confused with $N_G (v)$}
\item[$(\mathscr{G}3)$] $R$ is $(\rho,d)$-dense and $\delta(R) \geq (1/2+\eta)L$;
\item[$(\mathscr{G}4)$] $R[\lbrace b_1,\ldots,b_r\rbrace] \cong K_r$ and $\lbrace b_1,\ldots,b_r\rbrace$ lies in a copy of $K_{18 r/\eta ^2}$ in $R$.\COMMENT{AT: Important for Lemma 15 that it is $\eta ^2$ here}
\end{itemize}
Then there exists an integer $s \leq \eps^{1/4}n$ such that the following holds.
Let $H$ be a graph on $s+\beta n$ vertices with $\Delta(H) \leq \Delta$ and assume that $H$ has a labelling $x_1,\ldots,x_{s+\beta n}$ of bandwidth at most $\beta n$ and $\chi(H) \leq r$.
Let $X := \lbrace x_1,\ldots,x_{s}\rbrace$ and $Y := \lbrace x_{s+1},\ldots,x_{s+\beta n}\rbrace$.
Let $\chi : V(H) \rightarrow [r]$ be a proper colouring of $H$.
Then there exists a mapping $f:V(H)\rightarrow V(R) \cup V_0$ with the following properties:
\begin{itemize}
\item[$(\mathscr{D}1)$] setting $I := f^{-1}(V_0)$, we have that $I$ is a subset of $X$ which is $2$-independent in $H$, and each  vertex in $V_0$ is mapped onto from a unique vertex in $H$ (so $|I|=|V_0|$);
\COMMENT{AT: with what was written before, we could have had many things mapped to a vertex in $V_0$} 
\item[$(\mathscr{D}2)$] for all $v \in V_0$, setting $W_v := N_H(f^{-1}(v))$, we have $W_v \subseteq X$ and $f(W_v) \subseteq N_v$;
\item[$(\mathscr{D}3)$] $|f^{-1}(a)| \leq \eps^{1/4}m$ for every $a \in V(R)$;
\item[$(\mathscr{D}4)$] for every edge $uv\in E(H)$ such that $f(u),f(v) \notin V_0$, we have $f(u)f(v) \in E(R)$;
\item[$(\mathscr{D}5)$] for all $y \in Y$ we have $f(y)=b_{\chi(y)}$.
\end{itemize}
\end{lemma}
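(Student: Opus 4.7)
The strategy is to define $f$ as a combination of a global ``baseline'' homomorphism from $X \cup Y$ into a clique-power of a Hamilton cycle in $R$, together with local overrides at a $2$-independent set $I \subseteq X$ of size $|V_0|$, which serves as the pre-image of $V_0$.

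\emph{Structures in $R$.} By $(\mathscr{G}3)$, $R$ satisfies the hypotheses of Theorem~\ref{rcycle}, which yields a spanning copy $\mathcal{C}$ of $C^{r^{\ast}}_L$ in $R$ for any fixed $r^{\ast} \geq r$; I would take $r^{\ast} := 4r$. Using Lemma~\ref{connect} and hypothesis $(\mathscr{G}4)$ (which says $\{b_1, \ldots, b_r\}$ lies in a $K_{18r/\eta^2}$ in $R$), I would arrange the labelling $V(\mathcal{C}) = \{u_1, \ldots, u_L\}$ along $\mathcal{C}$ so that $\{b_1, \ldots, b_r\} = \{u_1, \ldots, u_r\}$ as $r$ cyclically consecutive vertices. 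In parallel, for each $v \in V_0$, Lemma~\ref{nbrhd}(i) shows $R[N_v]$ is $(\rho/\eta^2, d)$-dense on at least $\eta L$ vertices; combined with Lemma~\ref{nbrhd}(iv) (which applies since $\rho/\eta^2 \ll d, 1/r$) and the argument at the start of the proof of Lemma~\ref{connect}, this produces an $r$-clique $K^v \subseteq N_v$ sitting inside a copy of $K_{18r/\eta^2}$ in $R$.

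\emph{Baseline and placement of $I$.} Set $s := c\eps^{1/4}n$ for a sufficiently small constant $c > 0$, so that $s \leq \eps^{1/4}n$. Partition $X$, respecting the bandwidth ordering with the rightmost block $B_1$ adjacent to $Y$, into blocks $B_T, B_{T-1}, \ldots, B_1$ of length $\beta n$ each, where $T = \Theta(s/(\beta n))$. Define the baseline by $f(y) := b_{\chi(y)}$ for $y \in Y$ and $f(x) := u_{k + \chi(x) - 1}$ for $x \in B_k$, with indices cyclic modulo $L$. Since $r^{\ast} > r$, any $r+1$ cyclically-consecutive vertices of $\mathcal{C}$ form a clique in $R$, so edges of $H$ within each block and across adjacent blocks (including the boundary between $B_1$ and $Y$, since $B_1$ maps into $\{u_1, \ldots, u_r\} = \{b_1, \ldots, b_r\}$) are mapped to edges of $R$, making the baseline a homomorphism. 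For each $v \in V_0$, I would assign a target block $B_{k(v)}$ so as to distribute $V_0$ roughly uniformly across blocks, and then greedily pick $y_v \in B_{k(v)}$ so that $I := \{y_v\}_{v \in V_0}$ is $2$-independent in $H$; this is feasible because each block has $\beta n \gg \Delta^2 (|V_0|/T)$ slots and $H$ has maximum degree $\Delta$.

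\emph{Local overrides and verification.} Set $f(y_v) := v$ for each $v \in V_0$. For each $w \in N_H(y_v)$, override the baseline to set $f(w) \in K^v$, specifically an element of $K^v$ adjacent in $R$ to the baseline images of $w$'s other $H$-neighbours (which all lie in the baseline clique at $B_{k(v)}$ or an adjacent block). Such an element exists because $K^v$ lies in a $K_{18r/\eta^2}$, providing many candidates, and $\delta(R) \geq (1/2+\eta)L$ ensures ample edges from any candidate into any bounded subset of $V(R)$. The $2$-independence of $I$ guarantees that no $w$ receives conflicting requirements from two different $y_v, y_{v'}$. The total number of overridden values is at most $(\Delta+1)|V_0| = O(\Delta\eps n)$, negligible compared to $|V(H)|$. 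Properties $(\mathscr{D}1)$, $(\mathscr{D}4)$, and $(\mathscr{D}5)$ are then immediate; $(\mathscr{D}2)$ holds because $N_H(y_v) \subseteq X$ lies within $\beta n$ of $y_v$ in the ordering and is overridden into $K^v \subseteq N_v$; and $(\mathscr{D}3)$ holds because the baseline assigns at most $\lceil rs/L \rceil \leq \eps^{1/4}m$ vertices to each cluster (for small enough $c$), while the overrides add only $O(\Delta\eps m)$ per cluster.

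\textbf{Main obstacle.} The most delicate step is coordinating the local overrides in the third paragraph: for each $w \in N_H(y_v)$, one must exhibit an element of $K^v$ simultaneously adjacent in $R$ to all of $w$'s baseline-mapped $H$-neighbours, and these choices must be made in parallel across all $v \in V_0$ without ever overloading a single cluster of $R$ (since $|V_0|$ may be linear in $n$ while $R$ has only the constant number $L$ of clusters). The minimum-degree hypothesis $\delta(R) \geq (1/2+\eta)L$ and the $K_{18r/\eta^2}$-extendability of each $K^v$ are exactly what make this simultaneous choice possible, but the scheduling---including the spread of the $K^v$'s across $V(R)$ to balance load---is the technical heart of the argument.
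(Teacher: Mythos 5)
Your proposal takes a genuinely different route from the paper --- a fixed spanning power of a Hamilton cycle in $R$ as the "baseline" plus local overrides at $N_H(y_v)$ --- but the local-override step has a gap that I do not think can be repaired without changing the approach.

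The gap is precisely in the boundary between overridden and non-overridden vertices, which you correctly flag as the main obstacle. For $w \in N_H(y_v)$ you set $f(w) \in K^v \subseteq N_v$. But $w$ has other $H$-neighbours $w' \notin N_H[y_v]$ that retain their baseline images, and these lie in a window of $O(r)$ consecutive vertices of $\mathcal{C}$. You therefore need a vertex of $N_v$ adjacent in $R$ to all these baseline images. Neither of the two tools you cite delivers this. With $\delta(R) \geq (1/2+\eta)L$, the common neighbourhood of $k$ vertices can already be empty once $k \geq 2/(1/2-\eta) \approx 4$, so minimum degree gives nothing for a window of $3r$ vertices. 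The $K_{18r/\eta^2}$-extendability of $K^v$ gives a rich clique around $K^v$, but that extension is inside $R$, not inside $N_v$, so (a) most of its vertices do not satisfy $(\mathscr{D}2)$, and (b) even those that do have no reason to be adjacent to an arbitrary $O(r)$-window of $\mathcal{C}$. Fundamentally, $\mathcal{C}$ is chosen independently of $V_0$, so there is no structural connection between the baseline window near $y_v$ and the set $N_v$; a locally dense $R$ can certainly contain a $K_{3r}$ whose common $R$-neighbourhood misses $N_v$ entirely.

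The paper's proof avoids the interface problem altogether by not using a fixed baseline: Lemma~\ref{findF} constructs a $2r$-trail $F$ in $R$ that literally passes through a $K_{2r}$ inside $N_v$ for each $v \in V_0$ (the trail is a concatenation of such extendable cliques, joined by the Connecting lemma, with the endpoint clique $\{b_1,\ldots,b_r\}$ sewn in via $(\mathscr{G}4)$). The homomorphism then simply walks $H$ along $F$, and the vertices that need to land in $N_v$ do so automatically because the trail visits a clique in $N_v$ at exactly that moment. The price is controlling how often $F$ revisits a vertex of $R$, which is handled by the $L^{2r-1}/\eps^{1/12}$ bound of $(\mathscr{F}4)$. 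If you try to repair your override by detouring the baseline for the whole block $B_{k(v)}$ into $K^v$, you then need to reconnect to $\mathcal{C}$ on either side with short paths avoiding overused vertices --- and at that point you have essentially reconstructed the paper's trail $F$.
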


To prove Lemma~\ref{lemmaforH}, we will need an auxiliary result, Lemma~\ref{findF}, which produces a `framework' $F$ in the reduced graph which we will later use to find $f$. This framework $F$ is a $2r$-trail such that for every $v \in V_0$ there is a copy 
$T$ of $K_{2r}$ in $F$ such that $V(T)\subseteq N_v$.

\begin{lemma}\label{findF}
Let $0 < 1/n \ll 1/L \ll \eps  \ll \rho \ll \eta \ll d, 1/r \leq 1$.
Let $G$ be an $n$-vertex graph, $R$ an $L$-vertex graph and $\lbrace b_1,\ldots,b_r\rbrace \subseteq V(R)$ be such that
\begin{itemize}
\item[$(\mathscr{G}1)$] $G$ has vertex partition $\lbrace V_0\rbrace  \cup \lbrace V_a : a \in V(R)\rbrace$ where $|V_0| \leq \eps n$ and $|V_a|=:m$ for all $a \in V(R)$;
\item[$(\mathscr{G}2)$] each $v \in V_0$ is equipped with a subset $N_v \subseteq V(R)$ with $|N_v| \geq \eta L$;
\item[$(\mathscr{G}3)$] $R$ is $(\rho,d)$-dense and $\delta(R) \geq (1/2+\eta)L$;
\item[$(\mathscr{G}4)$] $R[\lbrace b_1,\ldots,b_r\rbrace] \cong K_r$ and $\lbrace b_1,\ldots,b_r\rbrace$ lies in a copy of $K_{18 r/\eta ^2}$ in $R$.
\end{itemize}
\COMMENT{deleted a sentence that I thought wasn't used- please check}
Then there exists an integer $K \leq L^{2r}$ and a subgraph $F \subseteq R$ such that
\begin{itemize}
\item[$(\mathscr{F}1)$] $F$ is a $2r$-trail with ordering $a_1,\ldots,a_{t}$ where $t=(8K+1)r$;
\item[$(\mathscr{F}2)$] there is a partition $V_0=V_0^1\cup \ldots \cup V_0^K$ such that $N_v \supseteq \lbrace a_{8(i-1)r+1},\ldots,a_{8(i-1)r+2r}\rbrace$ for all $v \in V_0^i$ and $|V_0^i| \leq \sqrt{\eps}m/L^{2r-1}$ for all $i \in [K]$;
\item[$(\mathscr{F}3)$] $(a_{t-r+1},\ldots,a_{t})=(b_1,\ldots,b_r)$;
\item[$(\mathscr{F}4)$] every $a \in V(R)$ appears at most $L^{2r-1}/\eps^{1/12}$ times in the sequence $a_1,\ldots,a_{t}$.
\end{itemize}
\end{lemma}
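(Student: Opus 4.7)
The plan is to construct $F$ in three stages: first, for each $v\in V_0$ find a copy of $K_{2r}$ inside $N_v$ that will serve as one of the trail's ``blocks''; second, assign each $v$ to such a clique carefully so that no single clique is overused, yielding the partition required by $(\mathscr{F}2)$; third, thread the resulting blocks together with the Connecting Lemma (Lemma~\ref{connect}) applied with $r$ replaced by $2r$, finishing with a terminal splice onto $(b_1,\dots,b_r)$ that crucially exploits the large clique $K_{18r/\eta^2}$ provided by $(\mathscr{G}4)$.

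For the first two stages, observe that for every $v\in V_0$ the hypothesis $|N_v|\geq\eta L$ together with Lemma~\ref{nbrhd}(i) implies that $R[N_v]$ is $(\sqrt{\rho},d)$-dense, so Lemma~\ref{nbrhd}(iv) produces at least $cL^{2r}$ copies of $K_{2r}$ inside $N_v$ (for some $c=c(d,r,\eta)>0$), each of which is $\Omega(L)$-extendable in $R$. I then pick $T_v\subseteq N_v$ uniformly at random from this pool. The expected number of $v\in V_0$ landing on any given $K_{2r}$ of $R$ is at most $|V_0|/(cL^{2r})\le 2\eps m/(cL^{2r-1})\ll\sqrt{\eps}m/L^{2r-1}$; a Chernoff bound combined with a union bound over the at most $L^{2r}$ copies of $K_{2r}$ in $R$ yields an assignment in which no $K_{2r}$ is chosen by more than $\sqrt{\eps}m/L^{2r-1}$ vertices. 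I fix such an assignment, let $T_1,\dots,T_K$ enumerate the distinct chosen cliques, and set $V_0^i:=\{v\in V_0:T_v=T_i\}$; then $K\le L^{2r}$ and the size bound in $(\mathscr{F}2)$ holds.

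For the third stage, I order the blocks $T_1,\dots,T_K$ arbitrarily. For each $i<K$, I apply Lemma~\ref{connect} (with $2r$ in place of $r$) to $T_i$, $T_{i+1}$ and an overuse avoidance set $W_i$ defined below, obtaining a $2r$-path on $6r$ new vertices linking $T_i$ to $T_{i+1}$; concatenation produces a $2r$-trail in which $T_i$ sits at positions $8(i-1)r+1,\dots,8(i-1)r+2r$. For the terminal splice, I pick a $K_{2r}$, call it $B'$, inside $V(K_{18r/\eta^2})\setminus\{b_1,\dots,b_r\}$ (which exists since $18r/\eta^2-r\ge 2r$) and apply Lemma~\ref{connect} once more to $T_K$ and $B'$ with avoidance set $W_K\cup\{b_1,\dots,b_r\}$, obtaining a $2r$-path $z_1,\dots,z_{6r}$. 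Inspecting the proof of Lemma~\ref{connect}, its last $2r$ vertices $z_{4r+1},\dots,z_{6r}$ play the role of $Y''$ and therefore lie inside $K_{18r/\eta^2}$, so each of them is adjacent to every $b_j$. Appending $z_1,\dots,z_{6r},b_1,\dots,b_r$ after $T_K$ thus produces a $2r$-trail of total length $(8K+1)r$ ending in $(b_1,\dots,b_r)$: each $(2r+1)$-window that straddles the boundary between the $z$'s and the $b$'s has all of its vertices in the common neighbourhood of $\{b_1,\dots,b_r\}$ within $K_{18r/\eta^2}$, so it forms a $K_{2r+1}$. This delivers $(\mathscr{F}1)$ and $(\mathscr{F}3)$.

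Finally, for $(\mathscr{F}4)$, I set $\tau:=L^{2r-1}/\eps^{1/12}$ and take $W_i$ at each application of Lemma~\ref{connect} to be the set of vertices of $R$ that already appear at least $\tau/2$ times in the partial trail. Since the total length of $F$ is at most $(8K+1)r\le 9rL^{2r}$, $|W_i|\le 18r\eps^{1/12}L\le\eta L/8$, so $|W_i|+r\le\eta L/4$ and the Connecting Lemma is applicable. Because every new connecting path consists of distinct vertices, a vertex's count rises by at most one per application, and once it enters $W_i$ it is never reused in a later connecting path. Any fixed vertex of $R$ lies in at most $L^{2r-1}$ copies of $K_{2r}$ and hence in at most $L^{2r-1}$ blocks; adding the at most $\tau/2+O(1)$ occurrences from connecting paths and the terminal splice yields at most $L^{2r-1}+\tau/2+O(1)\le\tau$ occurrences per vertex, as required. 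The most delicate part of the argument is the terminal splice: ensuring the trail ends precisely with $(b_1,\dots,b_r)$ while genuinely remaining a $2r$-trail is exactly what the oversized-clique hypothesis $(\mathscr{G}4)$ — via the set $Y''$ from the proof of Lemma~\ref{connect} — is tailored to accommodate, and the $(\mathscr{F}4)$ bookkeeping is the other delicate piece, controlled by the greedy avoidance just described.
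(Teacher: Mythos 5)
Your Stages~1 and~2 (assigning each $v\in V_0$ an extendable $K_{2r}$ inside $R[N_v]$, and grouping the $v$'s accordingly) are a correct variant of the paper's argument: the paper obtains the partition $V_0=V_0^1\cup\dots\cup V_0^K$ by a deterministic load-balancing/pigeonhole argument, while you randomise and invoke Chernoff plus a union bound over the $\le L^{2r}$ cliques. Both routes deliver $(\mathscr{F}2)$, and your $(\mathscr{F}4)$ bookkeeping (greedy avoidance of vertices that have appeared $\geq\tau/2$ times) is essentially the paper's.

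The terminal splice, however, has a genuine gap. You apply Lemma~\ref{connect} to $T_K$ and $B'\subseteq V(K_{18r/\eta^2})\setminus\{b_1,\dots,b_r\}$ with avoidance set $W_K\cup\{b_1,\dots,b_r\}$ and then claim, ``inspecting the proof,'' that the last $2r$ vertices $z_{4r+1},\dots,z_{6r}$ coincide with the set $Y''$ there and therefore lie in $K_{18r/\eta^2}$. This fails on several counts. (i)~The hypothesis of Lemma~\ref{connect} is not verified: once $\{b_1,\dots,b_r\}$ is placed in $W$, the only $K_{\lceil 18r/\eta^2\rceil}$ you know containing $B'$ (the one from $(\mathscr{G}4)$) intersects $W$, so the ``lies in a $K_C$ disjoint from $W$'' branch is unavailable; and the ``$\eta^2L$-extendable'' branch is not available either, since lying in a $K_{18r/\eta^2}$ only gives $d_R(V(B'))\geq 18r/\eta^2-2r$, a bound independent of $L$. (ii)~Even if the lemma applied via extendability, it would construct a fresh $K_C$ inside $R[N_R(B')\setminus W]$, and the set playing the role of $Y^*$ (and hence $Y''$) would live in that fresh $K_C$, not in the $K_{18r/\eta^2}$ of $(\mathscr{G}4)$, so you have no control over adjacency to $b_1,\dots,b_r$. (iii)~More fundamentally, the stated conclusion of Lemma~\ref{connect} only says that $z_1\cdots z_{6r}B'$ induces a $P^{2r}_{8r}$; reading off which vertices are ``$Y''$'' is not a licensed use of the lemma. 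Dropping $\{b_1,\dots,b_r\}$ from $W$ does not rescue the argument either: then $Y''$ could contain some $b_j$, producing a repeated vertex in a position where the $2r$-trail would require the non-edge $z_{4r+i}b_j$ with $z_{4r+i}=b_j$.

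The paper avoids all this by choosing $b'_1,\dots,b'_r$ with $\{b_1,\dots,b_r,b'_1,\dots,b'_r\}\subseteq V(K_{18r/\eta^2})$, taking $Y:=\{b_1,\dots,b_r,b'_1,\dots,b'_r\}$ and $W:=\emptyset$ for the final application of Lemma~\ref{connect}. The conclusion of the lemma then says directly that $x_K^1\cdots x_K^{6r}b_1\cdots b_rb'_1\cdots b'_r$ induces a $P^{2r}_{8r}$, so its prefix $x_K^1\cdots x_K^{6r}b_1\cdots b_r$ induces a $P^{2r}_{7r}$ ending with exactly $(b_1,\dots,b_r)$, using only the lemma's statement and no peeking at its proof. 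Taking $W=\emptyset$ here contributes at most one extra appearance to any vertex of $R$, which the slack in the $(\mathscr{F}4)$ bound comfortably absorbs.
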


\begin{proof}
Assume without loss of generality that $V(R)=[L]$.
We first prove the following claim.

\begin{claim}\label{findKrs}
There is a $K \leq L^{2r}$ and a set $\mathcal{T} = \lbrace T_1,\ldots,T_K\rbrace$ of $((d/2)^{2r} \eta L)$-extendable copies of $K_{2r}$ in $R$ such that there is a partition $V_0 = V_0^1 \cup \ldots \cup V_0^{K}$ with the property that, for all $k \in [K]$, we have $|V_0^k| \leq \sqrt{\eps}m/L^{2r-1}$ and $T_k \subseteq R[N_v]$ for all $v \in V_0^k$.
\end{claim}

\begin{claimproof}
By Lemma~\ref{nbrhd}(i), we see that $R_v := R[N_v]$ is $(\rho L^2/|N_v|^2,d)$-dense, and hence $(\sqrt{\rho},d)$-dense, where we used $(\mathscr{G}2)$ and the fact that $\rho/\eta^2 < \sqrt{\rho}$.
Lemma~\ref{nbrhd}(iv) implies that $R_v$ contains at least $(d/2)^{\binom{2r+1}{2}} \eta ^{2r} L^{2r}/(2r)!$ copies of $K_{2r}$, each of which is $((d/2)^{2r} \eta L)$-extendable in $R_v$ (and thus $R$).
\COMMENT{We do need these $\eta$ terms here in the 2 parts of this sentence.... so lots of changes to constants cos of this}

Let $T_1,\ldots,T_K$ be the set of $((d/2)^{2r} \eta L)$-extendable copies of $K_{2r}$ in $R$.
So
\begin{equation}\label{Keq99}
K \leq \binom{L}{2r} \leq L^{2r}.
\end{equation}
Then there is a partition $V_0^1 \cup \ldots \cup V_0^{K}$ of $V_0$ into subsets (some of which may be empty) such that for all $k \in [K]$ and $v \in V_0^k$ we have that $R_v \supseteq T_k$ and
$$
|V_0^k| \leq \frac{|V_0|}{(d/2)^{\binom{2r+1}{2}}\eta ^{2r} L^{2r}/(2r)!} \stackrel{(\mathscr{G}1)}{\leq} \frac{\eps m}{(d/2)^{\binom{2r+1}{2}} \eta ^{2r} (1-\eps)L^{2r-1}/(2r)!} \leq \frac{\sqrt{\eps}m}{L^{2r-1}},
$$
as desired.
\end{claimproof}

\medskip
\noindent
Let $\mathcal{T} := \lbrace T_i : i \in [K]\rbrace$ be obtained from the claim.
To complete the proof, we will use the Connecting lemma (Lemma~\ref{connect}) to join the $K_{2r}$s in $\mathcal{T}$ into a $2r$-trail.
In so doing, we have to be careful not to visit any $a \in [L]$ too many times so as to ensure  $(\mathscr{F}4)$ holds.

Suppose, for some $0 \leq i < K-1$ and all $j \in [i]$ we have obtained a copy $P_j=x_j^1\ldots x_j^{6r}$ of $P^{2r}_{6r} \subseteq R$ such that
\begin{itemize}
\item[$\mathscr{P}_1(i)$] $V(T_j)x_j^1\ldots x_j^{6r}$ induces a copy $P_j'$ of $P^{2r}_{8r}$;
\item[$\mathscr{P}_2(i)$] $x_j^1\ldots x_j^{6r}V(T_{j+1})$ induces a copy $P_j''$ of $P^{2r}_{8r}$;
\item[$\mathscr{P}_3(i)$] each $a \in [L]$ lies in at most $\eps^{-1/12}L^{2r-1}/2$ of the $2r$-paths $P_1,\ldots,P_i$.
\end{itemize}
We would like to find $P_{i+1}$ such that $\mathscr{P}_1(i+1)$--$\mathscr{P}_3(i+1)$ hold.
We will say that $a \in [L]$ is \emph{bad} if it appears in at least $\eps^{-1/12}L^{2r-1}/3$ of $P_1,\ldots,P_i$.
Let $D$ be the set of bad $a$. Since each $P_j$ contains $6r$ vertices, we have
$$
|D| {\leq} \frac{6ir}{\eps^{-1/12}L^{2r-1}/3} \leq \frac{18\eps^{1/12} Kr}{L^{2r-1}} \stackrel{(\ref{Keq99})}{\leq} 18\eps^{1/12}rL.
$$
Recall from Claim~\ref{findKrs} that $T_{i+1}$ and $T_{i+2}$ are both $((d/2)^{2r} \eta L)$-extendable copies of $K_{2r}$ in $R$.
Since $\eta \ll d,1/r$, they are $\eta ^2 L$-extendable copies.
Apply Lemma~\ref{connect} with $R,V(T_{i+1}),V(T_{i+2}),D,2r,\eta ^2$ playing the roles of $G,X,Y,W,r,\eta$ to obtain a copy $P_{i+1}$ of $P^{2r}_{6r} = x_{i+1}^1\ldots x_{i+1}^{6r}$ which avoids $D$ and such that $V(T_{i+1})x_{i+1}^1\ldots x_{i+1}^{6r}$ induces a copy $P_{i+1}'$ of $P^{2r}_{8r}$, and $x_{i+1}^1\ldots x_{i+1}^{6r}V(T_{i+2})$ induces a copy $P_{i+1}''$ of $P^{2r}_{8r}$.
So $\mathscr{P}_1(i+1)$ and $\mathscr{P}_2(i+1)$ hold.
Now let $a \in [L]$. If $a \notin V(P_{i+1})$, then $a$ lies in at most $\eps^{-1/12}L^{2r-1}/2$ of $P_1,\ldots,P_{i+1}$ by~$\mathscr{P}_3(i)$.
Otherwise, since $P_{i+1}$ avoids $D$, $a$ lies in at most $\eps^{-1/12}L^{2r-1}/3+1 < \eps^{-1/12}L^{2r-1}/2$ of $P_1,\ldots,P_{i+1}$. So~$\mathscr{P}_3(i+1)$ holds.
Therefore we can find $P_1,\ldots,P_{K-1}$ satisfying $\mathscr{P}_1(K-1)$--$\mathscr{P}_3(K-1)$.

Next we want to find a $2r$-path between $T_K$ and $\lbrace b_1,\ldots,b_r\rbrace$.
Let $\{b'_1,\dots, b'_r\}$ be such that $\lbrace b_1,\ldots,b_r,b'_1,\dots, b'_r\rbrace$ lies in a copy of $K_{18 r/\eta ^2}$ in $R$ (such vertices exist by $(\mathscr{G}4)$).
Apply Lemma~\ref{connect} with $R,V(T_K),$ $\lbrace b_1,\ldots,b_r, b'_1, \dots, b'_r\rbrace,\emptyset,2r,\eta^2$ playing the roles of $G,X,Y,W,r,\eta$ to obtain a copy $P_K$ of $P^{2r}_{6r} = x_{K}^1\ldots x_{K}^{6r}$ such that $V(T_{K})x_{K}^1\ldots x_{K}^{6r}$ induces a copy $P_{K}'$ of $P^{2r}_{8r}$, and furthermore\\
$x_{K}^1\ldots x_{K}^{6r}b_1\ldots b_r b'_1\dots b'_r$ induces a copy  of $P^{2r}_{8r}$; thus $x_{K}^1\ldots x_{K}^{6r}b_1\ldots b_r$ induces a copy $P_{K}''$ of $P^{2r}_{7r}$.
(Note that the vertices $b'_1,\dots, b'_r$ were only introduced so that we could apply Lemma~\ref{connect}.)
Clearly $\mathscr{P}_3(K-1)$ implies that each $a \in [K]$ lies in at most $L^{2r-1}\eps^{-1/12}/2+1$ of $P_1,\ldots,P_K$.

Writing $V(T_i) = \lbrace y_i^1,\ldots,y_i^{2r}\rbrace$ for all $i \in [K]$, $R$ contains a $2r$-trail $F' := \bigcup_{i \in [K]}(P_i' \cup P_i'')$
of length $(8K+1)r=t$, with ordering given by
$$
(a_1,\ldots,a_{t}) := (y_1^1,\ldots y_1^{2r}, x_1^1, \ldots x_1^{6r}, y_2^1,\ldots y_2^{2r}, \ldots, y_K^1,\ldots, y_K^{2r}, x_K^1,\ldots, x_K^{6r}, b_1,\ldots,b_r).
$$
By construction~$(\mathscr{F}1)$ and~$(\mathscr{F}3)$ hold. 

We have that $V(T_i) = \lbrace a_{8(i-1)r+1},\ldots,a_{8(i-1)r+2r}\rbrace$ for all $i \in [K]$, which together with Claim~\ref{findKrs} implies that~$(\mathscr{F}2)$ holds.
Now let $a \in [L]$. Then~$\mathscr{P}_3(K-1)$ implies that $a$ plays the role of some $x_i^j$ with $(i,j)\in[K]\times[6r]$ at most $\eps^{-1/12}L^{2r-1}/2+1$ times.
Since each $T_i$ with $i \in [K]$ is a distinct copy of $K_{2r}$ in $R$, we see that $a$ plays the role of some $y_i^j$ with $(i,j)\in[K]\times[2r]$ at most $\binom{L-1}{2r-1} \leq L^{2r-1}$ times.
Clearly $a$ plays the role of at most one of $b_1,\ldots,b_r$.
Thus the number of times $a$ appears in the sequence $a_1,\ldots,a_t$ is at most $\eps^{-1/12}L^{2r-1}/2 + L^{2r-1}+2 \leq \eps^{-1/12}L^{2r-1}$.
So~$(\mathscr{F}4)$ holds.
\end{proof}

Armed with Lemma~\ref{findF}, we can now prove Lemma~\ref{lemmaforH}.
The proof proceeds by splitting $V(H)$ into segments and assigning each one to a copy of $K_r$ in $R$, according to the framework $F$. For example, the first segment of $V(H)$ will be assigned to $\lbrace a_1,\ldots,a_r\rbrace$, and more specifically, those vertices coloured $i$ by $\chi$ will be mapped to $a_i$.
In those special segments assigned to vertex sets of $K_r$s which lie in $N_v$ for $v \in V_0^i$, we choose $|V_0^i|$ special vertices to be the pre-images of vertices in $V_0^i$.
The property~$(\mathscr{F}4)$ of $F$ will ensure that not too many vertices are mapped to the same cluster of $R$.

\begin{proof}[Proof of Lemma~\ref{lemmaforH}]
Let $G$ and $R$ be as in the statement of the lemma.
Without loss of generality we will assume that $V(R)=[L]$.
Apply Lemma~\ref{findF} to obtain $K \leq L^{2r}$ and $F \subseteq R$ such that
\begin{itemize}
\item[$(\mathscr{F}1)$] $F$ is a $2r$-trail with ordering $a_1,\ldots,a_{t}$ where $t=(8K+1)r$;
\item[$(\mathscr{F}2)$] there is a partition $V_0=V_0^1\cup \ldots \cup V_0^K$ such that $N_v \supseteq \lbrace a_{8(i-1)r+1},\ldots,a_{8(i-1)r+2r}\rbrace$ for all $v \in V_0^i$ and $|V_0^i| \leq \sqrt{\eps}m/L^{2r-1}$ for all $i \in [K]$;
\item[$(\mathscr{F}3)$] $(a_{t-r+1},\ldots,a_{t})=(b_1,\ldots,b_r)$;
\item[$(\mathscr{F}4)$] every $a \in [L]$ appears at most $L^{2r-1}/\eps^{1/12}$ times in the sequence $a_1,\ldots,a_{t}$.
\end{itemize}
Let
$$
s := 8K\eps^{1/3}m/L^{2r-1} \leq 8L\eps^{1/3}m \stackrel{(\mathscr{G}1)}{\leq} 8\eps^{1/3}n \leq \eps^{1/4}n.
$$
For all $i \in [K]$, let
\begin{equation}\label{15sizes}
u_i := |V_0^i| \stackrel{(\mathscr{F}2)}{\leq} \sqrt{\eps}m/L^{2r-1}\quad\text{and}\quad b := \eps^{1/3}m/L^{2r-1} > 100\beta mL \stackrel{(\mathscr{G}1)}{>} 99\beta n.
\end{equation}
Let $H,X,Y$ be as in the statement of the lemma.
Define a partition of $X \cup Y = \lbrace x_1,\ldots,x_{s+\beta n}\rbrace$ into $8K+1$ intervals
$$
B_1^1,B_1^2,\ldots,B^{8}_1,B^1_2,B_2^2,\ldots,B_2^{8},B^1_3,\ldots,B^1_K,B_K^2,\ldots,B_K^{8},B^1_{K+1}
$$
where $|B_i^j|=b$ for all $(i,j) \in [K]\times[8]$; $|B^1_{K+1}|=\beta n$; the first $b$ vertices $x_1,\dots,x_b$ in $X\cup Y $ form $B_1^1$, the next $b$ vertices in $X\cup Y$ form  $B_1^2$, and so on.
In particular, $B^1_{K+1}=Y$ and
 each interval comes equipped with the ordering inherited from the bandwidth ordering of $H$.
The first claim identifies a set $I \subseteq X$ which will be the pre-image of $V_0$ in our desired mapping.
Recall that given a graph $J$ and $A \subseteq V(J)$, we say that $A$ is \emph{$2$-independent} if every pair of vertices in $A$ are at distance at least $3$ in $J$. In other words, $A$ is an independent set and additionally the neighbourhoods of different vertices in $A$ are disjoint.
\medskip
\noindent

\begin{claim}\label{claim1H}
For each $i\in[K]$, there exists a $2$-independent set $I_i \subseteq B^1_i$ (with respect to $H$) of size $u_i$ such that $W(i) := \bigcup_{y \in I_i}N_H(y) \subseteq B^1_i$. Further, $I := \bigcup_{i \in [K]}I_i$ is a $2$-independent set in $H$.
\end{claim}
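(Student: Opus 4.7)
The plan is to construct each $I_i$ greedily inside the \emph{interior} of $B^1_i$, defined as those vertices at bandwidth-distance at least $\beta n$ from both endpoints of $B^1_i$. Since $|B^1_i| = b \geq 99\beta n$, the interior has size at least $b - 2\beta n \geq b/2$. Confining $I_i$ to the interior immediately guarantees $W(i) \subseteq B^1_i$: for any $x_j$ in the interior, every neighbour $x_k \in N_H(x_j)$ satisfies $|k-j|\leq \beta n$ by the bandwidth hypothesis, and hence $x_k \in B^1_i$.

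To build $I_i$, I would process the interior vertices in bandwidth order and add a vertex whenever it lies at graph-distance at least $3$ in $H$ from every previously chosen vertex. Since $\Delta(H) \leq \Delta$, each successful pick eliminates at most $1+\Delta+\Delta(\Delta-1) \leq 2\Delta^2$ candidates from the remaining pool (itself together with its $1$- and $2$-neighbourhoods), so after $u_i$ picks we have ruled out at most $2\Delta^2 u_i$ interior vertices. Combining $u_i \leq \sqrt{\eps}\,m/L^{2r-1}$ with $b = \eps^{1/3} m/L^{2r-1}$ yields $u_i \leq \eps^{1/6} b$; since $\eps \ll 1/\Delta$ in the hierarchy, $2\Delta^2 u_i$ is negligible compared to $b/2$, so the greedy procedure never stalls and produces $|I_i|=u_i$.

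For the final assertion that $I := \bigcup_i I_i$ is $2$-independent in $H$, within each $I_i$ this holds by construction. For vertices from $I_i$ and $I_{i'}$ with $i<i'$, observe that between $B^1_i$ and $B^1_{i'}$ the partition contains at least the seven intervals $B^2_i,\ldots,B^8_i$, so the bandwidth-gap between them is at least $7b > 2\beta n$. Any common neighbour of two such vertices would have to lie within $\beta n$ of each in bandwidth order, forcing the two vertices themselves to be within $2\beta n$ of one another---a contradiction. The only mildly delicate point in the whole argument is checking that the greedy step goes through, i.e.\ that $\Delta^2 u_i \ll b$; this is precisely where the gap between $\sqrt{\eps}$ and $\eps^{1/3}$ in the definitions of $u_i$ and $b$, together with $\eps \ll 1/\Delta$, is used.
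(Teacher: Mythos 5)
Your proposal is correct and follows essentially the same greedy strategy as the paper: work inside a buffered interior of each $B^1_i$, pick vertices one at a time avoiding the at most $O(\Delta^2)$ forbidden vertices around each previous pick, and use the comparison $\sqrt{\eps} \ll \eps^{1/3}$ (with $\eps\ll 1/\Delta$) to ensure the greedy step never stalls. The paper uses a slightly larger $2\beta n$ buffer and phrases the cross-block $2$-independence via disjointness of the resulting neighbourhood sets, whereas you argue directly from the bandwidth gap, but this is the same argument in mildly different packaging.
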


\begin{claimproof}
Obtain $A_i$ from $B^1_i$ by removing the first $2\beta n$ and last $2\beta n$ elements (which is possible by~(\ref{15sizes})).
Suppose we have obtained a $2$-independent set $I^j \subseteq A_i$ of size $0 \leq j < u_i$.
Then for any $y \in A_i$, the set $I^j \cup \lbrace y \rbrace$ is a $2$-independent set in $H$ of size $j+1$ if $y \notin I^j \cup N_H(y') \cup N_H(N_H(y'))$ for any $y' \in I^j$.
The number of excluded $y$ is at most 
\begin{eqnarray*}
|I^j|+\sum_{x \in I^j}d_H(x) + \sum_{x\in I^j}\sum_{z \in N_H(x)}d_H(z) &\leq& |I^j|(1+\Delta+\Delta^2) \leq 2\Delta^2 u_i \stackrel{(\ref{15sizes})}{\leq} 2\Delta^2\sqrt{\eps}m/L^{2r-1}\\
&\stackrel{(\ref{15sizes})}{<}& b - 4\beta n = |A_i|.
\end{eqnarray*}
Therefore we can find a $2$-independent set $I_i := I^{u_i}$ of size $u_i$ in $A_i$.
This together with the bandwidth property and the definition of $A_i$ implies that $W(i) \subseteq \bigcup_{y \in I_i}(N_H(y) \cup N_H(N_H(y))) \subseteq B^1_i$.
Thus there is no edge between $N_H(I_i)$ and $N_H(I_{i'})$ for $i \neq i'$.
So $I = \bigcup_{i \in [K]}I_i$ is a $2$-independent set in $H$, proving the claim.
\end{claimproof}

\medskip
\noindent
Let $\chi : V(H) \rightarrow [r]$ be the given proper colouring of $H$.
A second claim finds a suitable homomorphism $\phi : V(H) \rightarrow V(F)$ on which $f$ will be based.

\begin{claim}\label{claim2H}
For each $(i,j) \in [K]\times[8] \cup \lbrace (K+1,1)\rbrace$,
let
$$
\phi(x) := a_{(8(i-1)+(j-1))r+\chi(x)}\quad\text{if } x \in B_i^j.
$$
Then $\phi : V(H) \rightarrow V(F)$ is a graph homomorphism such that 
$|\phi^{-1}(a)| \leq \eps^{1/4}m$ for all $a \in [L]$.
\end{claim}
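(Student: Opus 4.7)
The plan is to verify the two claimed properties of $\phi$ separately, both by analysing how each block $B_i^j$ interacts with the structural decomposition of the trail $F$.

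For the homomorphism property, I would first use the bandwidth condition on $H$ together with the fact that every block has at least $\beta n$ vertices (this is~(\ref{15sizes}) for $(i,j)\in[K]\times[8]$, while $|B^1_{K+1}|=\beta n$ by definition) to conclude that any edge $uv\in E(H)$ has its endpoints either in the same block, or in two consecutive blocks in the natural sequence $B_1^1,B_1^2,\ldots,B_K^8,B^1_{K+1}$. From the definition of $\phi$, the images $\phi(u),\phi(v)$ are then positions in the ordering of $F$ whose indices differ by at most $2r-1$. The key structural observation is that any such pair of positions lies entirely inside one of the $2r$-paths $P_i'=V(T_i)x_i^1\ldots x_i^{6r}$, $P_i''=x_i^1\ldots x_i^{6r}V(T_{i+1})$ for $i\in[K-1]$, or $P_K''=x_K^1\ldots x_K^{6r}b_1\ldots b_r$ at the final junction; the appropriate path at each block boundary can be read off directly from the construction of $F$. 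Since the vertex sequence of any $2r$-path is repetition-free, distinct indices give distinct vertices of $R$, and these are automatically joined by an edge of $F$ because their index distance is at most $2r$. Coincident indices can only arise when $u,v$ belong to the same block with $\chi(u)=\chi(v)$, which is forbidden by the properness of $\chi$; endpoints in consecutive blocks automatically fall in disjoint index ranges.

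For the fibre bound, I would multiply two basic estimates: each position in the ordering of $F$ receives at most $|B_i^j|\leq b=\eps^{1/3}m/L^{2r-1}$ preimages (with $B^1_{K+1}$ of size $\beta n\leq b$ by~(\ref{15sizes}) also satisfying this bound), and by~$(\mathscr{F}4)$ each $a\in[L]$ appears at most $L^{2r-1}/\eps^{1/12}$ times as a position in the trail ordering. Their product is $\eps^{1/3-1/12}m=\eps^{1/4}m$, as required.

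The only real subtlety is correctly identifying, at each adjacency between consecutive blocks, the $2r$-path in which both images simultaneously lie: $P_i'$ handles transitions internal to a chunk $i$, $P_i''$ is needed at the transition from chunk $i$ to chunk $i+1$, and $P_K''$ handles the final boundary where $b_1,\ldots,b_r$ replaces a prospective $V(T_{K+1})$. Once this bookkeeping is carried out, everything else is immediate.
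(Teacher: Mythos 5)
Your proof is essentially correct and follows the same overall plan as the paper: classify edges $uv$ of $H$ by the relative positions of their blocks, deduce that the images are trail positions at index distance at most $2r$, conclude they are distinct and adjacent, and multiply the per-position pre-image bound $b$ by the per-vertex repetition bound from~$(\mathscr{F}4)$ to get $\eps^{1/4}m$.

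The one place you depart from the paper is the middle step of the homomorphism check. You establish ``distinct indices $\Rightarrow$ distinct vertices joined by an edge'' by observing that any window of $\leq 2r+1$ consecutive trail positions lies inside one of the constituent $2r$-paths $P_i'$, $P_i''$, $P_K''$, which are repetition-free. That reasoning is sound, but it references the \emph{construction} of $F$ inside the proof of Lemma~\ref{findF}, not its \emph{conclusion} $(\mathscr{F}1)$--$(\mathscr{F}4)$, so it breaks the abstraction the claim is supposed to use; it also leads you to omit $P_K'$ from your list, which is the path you actually need for transitions $B_K^j \to B_K^{j+1}$ with $j\leq 6$. The paper argues directly from $(\mathscr{F}1)$: it writes $T_i := \{a_{(8(i-1)+(j-1))r+p} : p \in [r]\}$ and $T_{i'}$, notes they are either equal or consecutive $r$-intervals, and invokes the $2r$-trail property to conclude $F[T_i\cup T_{i'}]$ is a clique on distinct vertices (note that the definition of a $2r$-trail already forbids a repeated vertex within an index window of size $\leq 2r+1$, since that would force a loop in $E(F)$). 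So your detour through the $P_i'$, $P_i''$ is unnecessary: the same conclusion is available from $(\mathscr{F}1)$ alone, which keeps the claim self-contained within the framework of Lemma~\ref{findF}. The fibre-size bound is identical to the paper's.
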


\begin{claimproof}
Note first that if $a_k$ is in the image of $\phi$ for some $k\in \mathbb{N}$, then, recalling~$(\mathscr{F}1)$, we have that $k \in [t]$, so $V(F) \supseteq \phi(V(H))$.
Let us check that $\phi$ is a homomorphism.
Let $xy \in E(H)$.
Let $(i,j),(i',j')\in [K]\times[8] \cup \lbrace (K+1,1)\rbrace$ be such that $x \in B_i^j$ and $y \in B_{i'}^{j'}$.
Since $H$ has bandwidth at most $\beta n$ and $|B_i^j|,|B_{i'}^{j'}| > \beta n$, we must have $(i',j')\in\lbrace (i,j-1),(i,j),(i,j+1)\rbrace$, where we let $(i,9) := (i+1,1)$ and $(i,0) := (i-1,8)$.
So, writing
$$
T_i := \lbrace a_{(8(i-1)+(j-1))r+p} : p \in [r]\rbrace\quad\text{and}\quad T_{i'} := \lbrace a_{(8(i'-1)r+(j'-1))+p}: p \in [r]\rbrace
$$ 
we either have $T_i=T_{i'}$, or $T_i$ and $T_{i'}$ are consecutive intervals in $a_1,\ldots,a_t$ each of length $r$.
In both cases we have $\phi(x) \neq \phi(y)$ (in the first case this follows from the fact that $\chi(x) \neq \chi(y)$).
But~$(\mathscr{F}1)$ now implies that $F[T_i \cup T_{i'}]$ is a clique, so since $\phi(x) \in T_i$ and $\phi(y) \in T_{i'}$ are distinct, $\phi(x)\phi(y) \in E(F)$, as required.

For the final assertion, each $a \in V(F)$ appears at most $L^{2r-1}/\eps^{1/12}$ times in the sequence $a_1,\ldots,a_t$ by~$(\mathscr{F}4)$.
So, writing $\theta : V(H) \rightarrow [t]$ where $\phi(x) = a_{\theta(x)}$, we have
$$
|\phi^{-1}(a)| \leq \frac{L^{2r-1}}{\eps^{1/12}} \cdot \max_{k \in [t]}|\theta^{-1}(k)| \leq \frac{L^{2r-1}b}{\eps^{1/12}} \stackrel{(\ref{15sizes})}{=} \eps^{1/4}m,
$$
as desired.
\end{claimproof}

\medskip
\noindent
Now let $H' := H\setminus I$ where $I := \bigcup_{k \in [K]}I_k$ and $W := \bigcup_{k \in [K]}W(k)$ where $W(k)$ is defined in Claim~\ref{claim1H}.
Note also that $W \subseteq V(H')$ since, by Claim~\ref{claim1H}, $I$ is an independent set.

Let $g : I \rightarrow V_0$ be a bijection such that $g(I_i) =V_0^i$ for all $i \in [K]$ (which is clearly possible by Claim~\ref{claim1H} and~(\ref{15sizes})).
Since $I_i$ is a $2$-independent set in $H$, the set of neighbourhoods $N_H(y)$ is pairwise disjoint over all $y \in I_i$.
So for each $w \in W(i)$, there is a unique $y \in I_i$ for which $w \in N_H(y)$. 
Claim~\ref{claim2H} implies that $|\phi^{-1}(a)| \leq \eps^{1/4}m$ for all $a \in [L]$.

We claim that $f : V(H) \rightarrow [L] \cup V_0$ given by
\begin{equation}\label{fdef}
f(x) = \begin{cases} \phi(x) &\mbox{if } x \in V(H)\setminus I \\
g(x) & \mbox{if } x \in I \end{cases} 
\end{equation}
is the required mapping.
Note that $f(V(H)\setminus I) \subseteq [L]$ and $f(I) \subseteq V_0$.
For~$(\mathscr{D}1)$, 
note that, by Claim~\ref{claim1H} and~(\ref{fdef}), $f(I)=g(I)=V_0$, $|I|=|V_0|$ and $I$ is a $2$-independent subset of $X$.
For~$(\mathscr{D}2)$,
let $v \in V_0$ and $W_v := N_H(f^{-1}(v))$.
Let $k \in [K]$ be such that $v \in V_0^k$.
Then $f^{-1}(v)=g^{-1}(v) \in I_k$. So $W_v \subseteq W(i) \subseteq B^1_i \subseteq  X$.
Let $x \in W_v\subseteq B^1_i$.
By Claim~\ref{claim2H} and $(\mathscr{F}2)$ we have that $f(x)=\phi(x)=a_{8(i-1)+\chi(x)} \in N_v$.
This completes the proof of~$(\mathscr{D}2)$.

For~$(\mathscr{D}3)$, let $a \in V(R)$. Then $f^{-1}(a)\subseteq \phi^{-1}(a)$ has size at most $\eps^{1/4}m$ by Claim~\ref{claim2H}.
For~$(\mathscr{D}4)$, let $uv \in E(H)$ be such that $f(u),f(v) \notin V_0$. So $u,v \in V(H)\setminus I$ and $f(u)=\phi(u)$ and $f(v)=\phi(v)$.
By Claim~\ref{claim2H}, $\phi : V(H)\rightarrow V(F)$ is a homomorphism, so $f(u)f(v) \in E(F) \subseteq E(R)$.
Finally, for~$(\mathscr{D}5)$, we have that $Y \cap I = \emptyset$ by Claim~\ref{claim1H}, so for any $y \in Y$ we have $f(y)=\phi(y)=a_{t-r+\chi(y)}=b_{\chi(y)}$ by~$(\mathscr{F}3)$.
\end{proof}

\section{The lemma for $G$: adjusting cluster sizes}\label{sec7}

Recall the definition of $Z^r_\ell$ from Section~\ref{notation} and in particular that it contains a $K_r$-factor.
Our goal in this section is to prove Lemma~\ref{lemmaforG}. Roughly speaking, it supposes that the reduced graph $R$ of $G$ contains a spanning copy of $Z^{2r}_\ell$, its clusters $V_1,\dots,V_L$ are equally sized, and pairs of clusters corresponding to the $K_{2r}$-factor $\ell\cdot K_{2r}$ in $Z^{2r}_\ell$ are superregular.
Then we can adjust $V_1,\ldots,V_L$ slightly by reallocating a small number of vertices so that they have given sizes, at the expense of now having superregular pairs corresponding to a $K_r$-factor $2\ell \cdot K_r$.

To formalise the structural properties we need from $G$, we make the following definition (very similar to Definition~8.1 in~\cite{st1}).


\begin{definition}[$r$-Cycle structure]\label{cyclestructure}
Given integers $n,\ell,r$, a graph $G$ on $n$ vertices, and constants $\eps,\delta>0$, we say that $G$ has an \emph{$(R,\ell,r,\mathcal{V},\eps,\delta)$-cycle structure} $\mathcal{C}$ if the following hold:
\begin{itemize}
\item[$(\mathscr{C}1)$] $\mathcal{V} = \lbrace V_0\rbrace \cup \lbrace V_{i,j}:(i,j) \in [\ell]\times[r]\rbrace$ is a partition of $V(G)$, where $|V_0| \leq \eps n$.
\item[$(\mathscr{C}2)$] $R$ has vertex set $[\ell]\times[r]$ and $R \supseteq Z^{r}_\ell$ and $G[V_{i,j},V_{i',j'}]$ is $(\eps,\delta)$-regular whenever $(i,j)(i',j') \in E(R)$;
\item[$(\mathscr{C}3)$] $G[V_{i,j},V_{i,j'}]$ is $(\eps,\delta)$-superregular whenever $i \in [\ell]$ and $1 \leq j < j' \leq r$.
\end{itemize}
We say that $\mathcal{V}$ \emph{induces} $\mathcal{C}$. If $V_0 = \emptyset$ we say that $\mathcal{C}$ is \emph{spanning}.
\end{definition}

The next definition concerns a convenient relabelling of the vertex set of a graph, which we will use for the reduced graph $R$.

\begin{definition}[Bijection $\phi^{2r}_\ell$]\label{bijdef}
Given integers $r,\ell$, define $\phi^{2r}_{\ell} : [\ell]\times[2r]\rightarrow[2\ell]\times[r]$ by setting
\begin{equation}\label{phi2rldef}
\phi^{2r}_{\ell}(i,j)=\left((2i-1)+\left\lfloor\frac{j}{r}\right\rfloor,j-\left(\left\lceil \frac{j}{r}\right\rceil-1\right) r\right),\quad\text{for all }(i,j)\in[\ell]\times[2r].
\end{equation}
It is easy to check that $\phi^{2r}_{\ell}$ is a bijection and
\begin{alignat*}{6}
\phi^{2r}_{\ell}&(1,1)\ldots\phi^{2r}_{\ell}&&(1,r)\phi^{2r}_{\ell}&&(1,r+1)\ldots\phi^{2r}_{\ell}&&(1,2r)\ldots\phi^{2r}_{\ell}&&(\ell,r+1)\ldots\phi^{2r}_{\ell}&&(\ell,2r)\\
= &(1,1)\ldots&&(1,r)&&(2,1)\ldots&&(2,r)\ldots&&(2\ell,1)\ldots&&(2\ell,r).
\end{alignat*}
This implies that for all $a \in [2\ell]$ and distinct $b,b'\in[r]$, there are $i \in [\ell]$ and $(j,j')\in[[2r]]^2$ such that $(\phi^{2r}_{\ell}(i,j),\phi^{2r}_{\ell}(i,j'))=((a,b),(a,b'))$.

Given a graph $R$ and a bijection $\phi : V(R) \rightarrow V$ to some set $V$, we write $\phi(R)$ for the graph with vertex set $\lbrace \phi(x) : x \in V(R)\rbrace$ and edge set $\lbrace \phi(x)\phi(y) : xy \in E(R)\rbrace$. So $\phi(R) \cong R$.
\end{definition}

In the language of Definition~\ref{cyclestructure}, the main result of this section states that, given a graph with a (spanning) $2r$-cycle structure, we can obtain from it an $r$-cycle structure which is almost balanced, but the exact deviation from perfect balancedness can be controlled.

\begin{lemma}[Lemma for $G$]\label{lemmaforG}
Let $n,\ell,m,r \in \mathbb{N}$ and $0 < 1/n \ll \xi \ll 1/\ell \ll \eps \ll \delta < 1/r$.
Suppose that $G$ is a graph on $n$ vertices with a spanning $(R,\ell,2r,\mathcal{V},\eps,\delta)$-cycle structure, where $\mathcal{V} = \lbrace V_{i,j} : (i,j)\in[\ell]\times[2r]\rbrace$ and $|V_{i,j}| = m$ for all $(i,j)\in[\ell]\times[2r]$.
Let $\lbrace \tau_{a,b} \in \mathbb Z : (a,b)\in[2\ell]\times[r]\rbrace$ be such that $0 \leq \tau_{a,b}\leq \eps m$ for all $(a,b)\in[2\ell]\times[r]$.
Then there exist positive integers $\lbrace m_{a,b} : (a,b)\in[2\ell]\times[r]\rbrace$ such that
\begin{itemize}
\item[$(\mathscr{L}1)$] $\sum_{(a,b)\in[2\ell]\times[r]}(m_{a,b}+\tau_{a,b})=n$ and $m_{a,b} \geq (1-\sqrt{\eps})m$ and $|m_{a,b}-m_{a,b'}| \leq 1$ for all $a \in [2\ell]$ and $b,b' \in [r]$;
\item[$(\mathscr{L}2)$] given any $\lbrace n_{a,b}\in \mathbb N : (a,b)  \in [2\ell]\times[r]\rbrace$ with $\sum_{(a,b)\in[2\ell]\times[r]}(n_{a,b}+\tau_{a,b})=n$ and $|m_{a,b}- n_{a,b}| \leq \xi n$, there is a partition $\mathcal{X} = \lbrace X_{a,b} : (a,b)\in[2\ell]\times[r] \rbrace$ of $V(G)$ with $|X_{a,b}| = n_{a,b}+\tau_{a,b}$ and $|X_{a,b} \bigtriangleup V_{(\phi^{2r}_\ell)^{-1}(a,b)}| \leq \sqrt{\eps}m$ for all $(a,b)\in[2\ell]\times[r]$ such that
$G$ has a spanning $(\phi^{2r}_\ell(R),2\ell,r,\mathcal{X},\eps^{1/3},\delta/2)$-cycle structure.
\end{itemize}
\end{lemma}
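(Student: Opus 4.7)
The proof has two parts: defining the sizes $m_{a,b}$, and constructing the partition $\mathcal{X}$ for any admissible $\{n_{a,b}\}$.

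For the first part, I would set $T_a := \sum_{b \in [r]} \tau_{a,b}$ for each $a \in [2\ell]$ and pick $m_{a,b} \in \{\lfloor (rm-T_a)/r\rfloor, \lceil (rm-T_a)/r\rceil\}$ (with the two values chosen so that $\sum_b m_{a,b} = rm - T_a$). Then $\sum_{(a,b)}(m_{a,b}+\tau_{a,b}) = \sum_a rm = n$, the bound $m_{a,b} \geq (1-\sqrt{\eps})m$ follows from $T_a \leq r\eps m$, and the balancedness $|m_{a,b}-m_{a,b'}|\leq 1$ is immediate. So $(\mathscr{L}1)$ holds.

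For the second part, given $\{n_{a,b}\}$, let $Y_{a,b} := V_{(\phi^{2r}_\ell)^{-1}(a,b)}$ (so $|Y_{a,b}|=m$) and $d_{a,b} := (n_{a,b}+\tau_{a,b})-m$. A short calculation using the choice of $m_{a,b}$ gives $|d_{a,b}| \leq 4\eps m$ and $\sum_{(a,b)} d_{a,b}=0$. I would build $X_{a,b}$ from $Y_{a,b}$ by reassigning a small number of vertices, in two stages. In the \emph{routing stage}, set $s_a := \sum_b d_{a,b} = \sum_b(n_{a,b}-m_{a,b})$; then $|s_a|\leq r\xi n$ and $\sum_a s_a=0$. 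Choose integers $f_1,\ldots,f_{2\ell}$ along the $2\ell$-cycle with $f_a-f_{a-1}=-s_a$ (indices mod $2\ell$); a standard choice gives $\max_a|f_a|\leq \ell r\xi n$, which by the hierarchy (in particular $\xi$ chosen small enough below $1/\ell$) is at most $\sqrt{\eps}m/10$. Execute this flow by physically moving $|f_a|$ vertices between super-clusters $a$ and $a+1$; when $a$ is odd this stays inside a single $K_{2r}$ of $G$, and when $a$ is even it crosses between consecutive $K_{2r}$s along edges of $Z^{2r}_\ell$. In the \emph{balancing stage}, within each super-cluster $a$, reshuffle at most $O(\eps m)$ vertices among the sub-clusters so that the final sizes equal $n_{a,b}+\tau_{a,b}$.

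Both stages must be carried out so that each moved vertex has typical degrees into every cluster it is newly paired with: by $(\eps,\delta)$-regularity, at most $2r\eps m$ vertices per cluster fail the relevant degree conditions, so plenty of choices remain to pick only ``typical'' vertices. The result is $|X_{a,b}\triangle Y_{a,b}|\leq \sqrt{\eps}m$ for every $(a,b)$, and then Proposition~\ref{newsuperslice} applied with $\alpha=\sqrt{\eps}$ upgrades each original $(\eps,\delta)$-(super)regular pair to an $(\eps+6\eps^{1/4},\delta-4\sqrt{\eps})$-(super)regular pair, which the hierarchy absorbs into $(\eps^{1/3},\delta/2)$-(super)regularity. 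Checking the edge set shows that $\phi^{2r}_\ell(R)\supseteq Z^r_{2\ell}$, and the pairs whose (super)regularity is required either lie inside a $K_{2r}$ of $G$ (originally superregular by $(\mathscr{C}3)$) or correspond to an edge between consecutive $K_{2r}$s (originally regular by $(\mathscr{C}2)$), so $(\mathscr{L}2)$ follows.

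The main obstacle is the routing stage: the pairs between consecutive $K_{2r}$s are only $(\eps,\delta)$-regular, so vertices crossing these pairs must be selected with care to satisfy the per-vertex degree condition for superregularity in the new structure. Once the flow bound $|f_a|\leq \sqrt{\eps}m$ is secured via the hierarchy, the verification using Proposition~\ref{newsuperslice} is routine.
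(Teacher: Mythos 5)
Your derivation of $(\mathscr{L}1)$ is fine, and in fact cleaner than the paper's (which defines the $m_{a,b}$ implicitly through a balancing construction). Your routing stage is also basically sound: it corresponds to the paper's good-chain argument, and the per-vertex degree issue you flag there (at the regular but not superregular pairs between consecutive $K_{2r}$'s) is handled correctly by picking ``typical'' vertices.

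The gap is in the balancing stage, and it is not a minor technicality. Under the bijection $\phi^{2r}_\ell$, the $r$ sub-clusters $X_{a,1},\dots,X_{a,r}$ of a single super-cluster $a$ all correspond to clusters $V_{i,j}$ with $j$ in the \emph{same half} of $[2r]$ (all $j\in[r]$ if $a$ is odd, all $j\in[2r]\setminus[r]$ if $a$ is even). You propose to reshuffle $\Theta(\eps m)$ vertices ``within each super-cluster $a$,'' i.e.\ to move a vertex $v\in V_{i,j}$ into a set $X_{a,b'}\approx V_{i,j'}$ with $j,j'$ in the same half. But then $(\mathscr{C}3)$ of the target $r$-cycle structure requires $v$ to have at least $(\delta/2)|X_{a,b}|$ neighbours in $X_{a,b}\approx V_{i,j}$, where $b=\phi^{2r}_\ell(i,j)_2$ --- and $(\mathscr{C}3)$ of the original $2r$-cycle structure gives you no lower bound whatever on $d_G(v,V_{i,j})$, since superregularity is only between \emph{distinct} clusters. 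In the intended application $G$ comes from the Regularity Lemma and $G'[V_{i,j}]$ is literally edgeless, so $v$ has essentially zero neighbours in $X_{a,b}$ and superregularity of $G[X_{a,b},X_{a,b'}]$ fails outright. Such moves are therefore forbidden, and the balancing stage as described cannot be carried out.

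The paper gets around this by balancing across the two halves of a $K_{2r}$: in the $(a,b)$-coordinates this is an exchange between the two adjacent super-clusters $2i-1$ and $2i$, not within a single super-cluster (this is precisely what $(\mathscr{U}3)$ in Claim~\ref{Uclaim} records). The $(\mathscr{C}3)$ superregularity of the whole $K_{2r}$ guarantees that a vertex of $V_{i,j}$ with $j\in[r]$ has large degree into every $V_{i,j'}$ with $j'\in[2r]\setminus[r]$ and vice versa, so such cross-half moves are always valid. The subsequent fine-tuning (Claim~\ref{finalclaim}) is done via chains that only switch halves within a $K_{2r}$ or step to the next $K_{2r}$, never moving within a half. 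To repair your proof you would need to replace the ``within a super-cluster'' reshuffle with a cross-half exchange between super-clusters $2i-1$ and $2i$, taking care that this does not disturb the super-cluster totals fixed by the routing flow; the paper's order of operations (balance across halves first, then route around the cycle using good chains) is one way to manage this interaction.
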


\begin{proof}
Note that
\begin{equation}\label{msize}
2r\ell m=n.
\end{equation}
For each $(i,j)\in[\ell]\times[2r]$,
choose $A_{i,j} \subseteq V_{i,j}$ satisfying
\begin{equation}\label{Aij}
|A_{i,j}|=\tau_{\phi^{2r}_\ell(i,j)}
\end{equation}
and let
\begin{equation}\label{Yij}
Y_{i,j} := V_{i,j}\setminus A_{i,j},\quad\text{so}\quad (1-\eps)m \leq |Y_{i,j}| \leq m.
\end{equation}
Let $\mathcal{Y} :=\lbrace Y_0 \rbrace \cup \lbrace Y_{i,j} : (i,j)\in[\ell]\times[2r]\rbrace$ where
$$
Y_0 := V(G)\setminus \bigcup_{(i,j)\in[\ell]\times[2r]}Y_{i,j} = \bigcup_{(i,j)\in[\ell]\times[2r]}A_{i,j}.
$$

Given a vertex $v \in V(G)$ and $(i,j)\in[\ell]\times[2r]$, we will say that $v \rightarrow Y_{i,j}$ is \emph{valid} if
\begin{itemize}
\item $j\in[r]$ and $d_G(v,Y_{i,j'}) \geq (\delta-2\eps)m$ for all $j' \in [r]\setminus\lbrace j\rbrace$; or
\item $j\in[2r]\setminus[r]$ and $d_G(v,Y_{i,j'}) \geq (\delta-2\eps)m$ for all $j' \in ([2r]\setminus [r])\setminus \lbrace j\rbrace$.
\end{itemize}
The first claim furnishes us with many pairs $(v,Y_{i',j'})$ such that $v \in Y_{i,j}$ and $v \rightarrow Y_{i',j'}$ is valid.

\begin{claim}\label{goodvx}
Let $i \in [\ell]$
and suppose that
$1 \leq j \leq r < t \leq 2r$ or $1 \leq t \leq r < j \leq 2r$.
Then every vertex $v \in Y_{i,j}$ is such that $v\rightarrow Y_{i,j}, Y_{i,t}$ is valid, and at least $(1-\sqrt{\eps})m$ are such that $v \rightarrow Y_{i+1,j}, Y_{i+1,t}$ are also valid.
(Here e.g. $Y_{\ell+1,j}:=Y_{1,j}$.)
\end{claim}

\begin{claimproof}
Let $t,j$ be as in the statement. Since, by~$(\mathscr{C}3)$, $G[V_{i,j},V_{i,j'}]$ is $(\eps,\delta)$-superregular for all $j' \in [2r]\setminus \lbrace j \rbrace$, we have that every vertex $v \in V_{i,j}$ has at least $\delta|V_{i,j'}|$ neighbours in $V_{i,j'}$.
Thus every vertex $v \in Y_{i,j} \subseteq V_{i,j}$ has at least $\delta m-\eps m \geq (\delta-2\eps)m$ neighbours in $Y_{i,j'}$.
In particular, $v \rightarrow V_{i,j},V_{i,t}$ is valid.

From the definition of regularity, one can see the following. If $G[A,B]$ is an $(\eps,\delta)$-regular graph, then there are less than $\eps|A|$ vertices with less than $(\delta-\eps)|B|$ neighbours in $B$.
Thus, if $S_{i,j}$ is a subset of $N_{i,j} := \lbrace (i',j')\in V(R) : G[V_{i,j},V_{i',j'}] \text{ is } (\eps,\delta)\text{-regular}\rbrace$, we see that there are at least $(1-\eps|S_{i,j}|)|V_{i,j}|$ vertices in $V_{i,j}$ with at least $(\delta-\eps)m $ neighbours in $V_{i',j'}$ for all $(i',j')\in S_{i,j}$, and hence at least $(\delta-2\eps)m$ neighbours in $Y_{i',j'}$.

Recall that, since $Z^{2r}_\ell \subseteq R$ by~$(\mathscr{C}2)$, we have that $N_{i,j} \supseteq \lbrace (i,j'), (i+1,j') : j' \in [2r]\setminus\lbrace j \rbrace \rbrace$.
Thus the second assertion of the claim follows by taking $S_{i,j} := \lbrace (i+1,j') : j'\in[2r]\setminus\lbrace j,t\rbrace\rbrace$ and using the fact that $(1-|S_{i,j}|\eps)|V_{i,j}|-|A_{i,j}| \geq (1-(2r-2)\eps)m - \eps m \geq (1-\sqrt{\eps})m$. 
\end{claimproof}

\medskip
\noindent
Next we prove the following claim, which will give us a `balanced' partition.

\begin{claim}\label{Uclaim}
$V(G)$ has a partition $\lbrace Y_0 \rbrace \cup \lbrace U_{i,j} : (i,j)\in[\ell]\times[2r]\rbrace$ such that the following hold for all $i \in [\ell]$:
\begin{itemize}
\item[($\mathscr{U}$1)] $||U_{i,j}|-|U_{i,j'}|| \leq 1$ for all $(j,j') \in [[2r]]^2$;
\item[($\mathscr{U}$2)] $|Y_{i,j}\bigtriangleup U_{i,j}| \leq r\eps m$ for all $j\in [2r]$;
\item[($\mathscr{U}$3)] if $j \in [r]$ then $U_{i,j}\setminus Y_{i,j} \subseteq \bigcup_{k \in [2r]\setminus[r]}Y_{i,k}$ and if $k \in [2r]\setminus[r]$ then $U_{i,k}\subseteq Y_{i,k}$.
\end{itemize}
\end{claim}


\begin{claimproof}
Fix an $i \in [\ell]$, and, to simplify notation, let $A_j := Y_{i,j}$, $a_j := |A_j|$, $B_j := Y_{i,r+j}$ and $b_j := |B_j|$ for all $j \in [r]$.
Suppose without loss of generality that $a_1 \geq \ldots \geq a_r$ and $b_1 \geq \ldots \geq b_r$.
Let
\begin{equation}\label{S}
S := \max\left\lbrace \sum_{j \in [r]}(a_1-a_j), \sum_{j\in[r]}(b_j-b_r)\right\rbrace \stackrel{(\ref{Yij})}{\leq} r\eps m.
\end{equation}

Now let $A_j(0):=A_j$ and $B_j(0) := B_j$, and $a_j(0) := |A_j(0)|$ and $b_j(0) := |B_j(0)|$ for all $j \in [r]$.
Do the following for each $0 \leq s < S$.
Fix $t^-,t^+ \in [r]$ such that $a_{t^-}(s) \leq a_j(s)$ and $b_{t^+}(s) \geq b_j(s)$ for all $j \in [r]$. Choose $x \in B_{t^+} \cap B_{t^+}(s)$ and let
$$
A_{j}(s+1) := \begin{cases} A_{j}(s)\cup\lbrace x \rbrace &\mbox{if } j=t^- \\
A_{j}(s) & \mbox{if } j\in[r]\setminus\lbrace t^- \rbrace; \end{cases}
$$
$$
B_{j}(s+1) := \begin{cases} B_{j}(s)\setminus\lbrace x \rbrace &\mbox{if } j=t^+ \\
B_{j}(s) & \mbox{if } j\in[r]\setminus\lbrace t^+ \rbrace. \end{cases}
$$
Let $a_j(s+1):= |A_j(s+1)|$ and $b_j(s+1) := |B_j(s+1)|$ for all $j\in[r]$.
The following properties are clear:
\begin{itemize}
\item[(i)] for all $0 \leq s < S$ and $j \in [r]$ we have $A_j(s) \supseteq A_j$ and $A_j(s)\setminus A_j \subseteq \bigcup_{k\in [r]}B_k$, and $B_j(s) \subseteq B_j$. Furthermore, for all $j \in [r]$ we have $\sum_{j\in[r]}|A_j(s)\setminus A_j| = \sum_{k\in[r]}|B_k\setminus B_k(s)| = s$;
\item[(ii)] letting $s_1 := \sum_{j\in[r]}(a_1-a_j)$, we have that $a_1(s_1)=\ldots = a_r(s_1)=a_1$; and for each $s>s_1$ we have $|a_j(s)-a_{j'}(s)|\leq 1$;
\item[(iii)] letting $s_2 := \sum_{j \in [r]}(b_j-b_r)$, we have that $b_1(s_2)=\ldots = b_r(s_2)=b_r$; and for each $s>s_2$ we have $|b_j(s)-b_{j'}(s)|\leq 1$.
\end{itemize}
Now let $U_{i,j} := A_j(S)$ if $j\in[r]$ and $U_{i,j} := B_{j-r}(S)$ if $j\in[2r]\setminus[r]$.
For ($\mathscr{U}$1), the fact that $S=\max\lbrace s_1,s_2\rbrace$ together with~(ii) and~(iii) implies that $|a_j(S)-a_{j'}(S)| \leq 1$ and $|b_j(S)-b_{j'}(S)| \leq 1$ for all $j,j'\in[r]$. So~$(\mathscr{U}1)$ holds.
For~($\mathscr{U}$2), we have by~(i) that
\begin{align*}
|U_{i,j} \bigtriangleup Y_{i,j}| &= |U_{i,j}\setminus Y_{i,j}| = |A_j(S)\setminus A_j| \leq S \stackrel{(\ref{S})}{\leq} r\eps m\quad&&\text{if }j \in [r],\quad\text{and}\\
|U_{i,j} \bigtriangleup Y_{i,j}| &= |Y_{i,j}\setminus U_{i,j}| = |B_{j-r}\setminus B_{j-r}(S)| \leq S \stackrel{(\ref{S})}{\leq} r\eps m\quad&&\text{if }j \in [2r]\setminus[r].
\end{align*}
Finally, ($\mathscr{U}$3) follows immediately from~(i).
\end{claimproof}

\medskip
\noindent
The next claim shows that we can modify $\lbrace U_{i,j}\rbrace$ further to obtain a new partition with clusters of given sizes (each of which does not differ much from $|U_{i,j}|$).

\begin{claim}\label{finalclaim}
Let $\lbrace Y_0 \rbrace \cup \lbrace U_{i,j}:(i,j)\in[\ell]\times[2r]\rbrace$ be any partition of $V(G)$ satisfying $(\mathscr{U}1)$--$(\mathscr{U}3)$.
Let $\lbrace n'_{i,j} : (i,j) \in [\ell]\times[2r]\rbrace $ be such that $\sum_{(i,j)\in[\ell]\times[2r]}n'_{i,j}= \sum_{(i,j)\in[\ell]\times[2r]}|U_{i,j}| $ and $||U_{i,j}| - n'_{i,j}| \leq \xi n$ for all $(i,j)\in[\ell]\times[2r]$. Then
$V(G)$ has a partition $\lbrace Y_0 \rbrace \cup \lbrace W_{i,j} : (i,j) \in [\ell]\times[2r]\rbrace$ such that the following hold for all $(i,j)\in[\ell]\times[2r]$:
\begin{itemize}
\item[($\mathscr{W}$1)] $|W_{i,j}| = n'_{i,j}$;
\item[($\mathscr{W}$2)] $|W_{i,j}\bigtriangleup U_{i,j}| \leq \eps m$;
\item[($\mathscr{W}$3)] for every $v \in W_{i,j}$ we have that $v \rightarrow Y_{i,j}$ is valid. 
\end{itemize}
\end{claim}

\begin{claimproof}
Let
\begin{equation}\label{Keq}
K := 2r\ell\xi n \stackrel{(\ref{msize})}{=} 4r^2\ell^2\xi m \leq \frac{\eps m}{2}.
\end{equation}
Suppose, for some $0 \leq k < K/2$, we have found for each $(i,j) \in [\ell]\times[2r]$ subsets $U^k_{i,j} \subseteq V(G)$ such that the following hold:
\begin{itemize}
\item[$\mathscr{A}_1(k)$] $\lbrace Y_0\rbrace \cup \lbrace U^k_{i,j} : (i,j)\in[\ell]\times[2r]\rbrace$ is a partition of $V(G)$;
\item[$\mathscr{A}_2(k)$] for all $v \in U^k_{i,j}$ we have that $v \rightarrow Y_{i,j}$ is valid;
\item[$\mathscr{A}_3(k)$] for all $(i,j)\in[\ell]\times[2r]$ we have $|U^k_{i,j}\bigtriangleup U_{i,j}| \leq 2k$;
\item[$\mathscr{A}_4(k)$] $\sum_{(i,j)\in[\ell]\times[2r]}||U^k_{i,j}|-n'_{i,j}| \leq 2(r\ell \xi n-k)$.
\end{itemize}

We claim that we can set $U^0_{i,j}:=U_{i,j}$ for all $(i,j)\in[\ell]\times[2r]$.
Indeed, $\mathscr{A}_1(0)$ holds by Claim~\ref{Uclaim}.
For $\mathscr{A}_2(0)$, let $(i,j)\in[\ell]\times[2r]$ and let $v \in U_{i,j}$. If $v \in Y_{i,j}$, then $v \rightarrow Y_{i,j}$ is valid by~Claim~\ref{goodvx}. Otherwise, $v \in U_{i,j}\setminus Y_{i,j}$.
Note that by ($\mathscr{U}$3) this implies $j \in [r]$ and further
 $v \in \bigcup_{k \in [2r]\setminus[r]}Y_{i,k}$. So $v\rightarrow Y_{i,j}$ is valid by Claim~\ref{goodvx}.
Property $\mathscr{A}_3(0)$ vacuously holds and $\mathscr{A}_4(0)$ holds since $||U_{i,j}|-n'_{i,j}| \leq \xi n$ for all $(i,j)\in[\ell]\times[2r]$.

If $|U^k_{i,j}|=n'_{i,j}$ for all $(i,j)\in[\ell]\times[2r]$, then we stop.
Otherwise, we will obtain sets $U^{k+1}_{i,j}$ from $U^k_{i,j}$.
There must exist $(i^-,j^-),(i^+,j^+) \in [\ell]\times[2r]$ for which $|U^k_{i^-,j^-}| \leq n'_{i^-,j^-}-1$ and $|U^k_{i^+,j^+}| \geq n'_{i^+,j^+}+1$.

We will say that $(i_1,j_1)\rightarrow (i_2,j_2) \rightarrow \ldots \rightarrow (i_s,j_s)$ is a \emph{good chain (of length $s$)} if for all $p \in [s-1]$ there exist at least $(1-\sqrt{\eps})m$ vertices $v \in Y_{i_p,j_p}$ such that $v\rightarrow Y_{i_{p+1},j_{p+1}}$ is valid.
Claim~\ref{goodvx} implies that the following are good chains of length 3 (where here and for the remainder of the proof of Claim~\ref{finalclaim} addition is modulo $\ell$):
\begin{align*}
&(i^+,j^+) \rightarrow (i^+,j^-+r) \rightarrow (i^++1,j^-) &&\text{ if } j^+,j^- \in [r]\\
&(i^+,j^+) \rightarrow (i^+,j^--r) \rightarrow (i^++1,j^-) &&\text{ if } j^+,j^- \in [2r]\setminus[r]\\
&(i^+,j^+) \rightarrow (i^+,j^-) \rightarrow (i^++1,j^-) &&\text{ otherwise,}
\end{align*}
and further, in all cases and for all $t \geq 0$, the chain $(i^++t,j^-)\rightarrow (i^++t+1,j^-)$ of length 2 is good.
Together this implies that in all cases there is a good chain
$$
(i^+,j^+) =: (i_1,j_1)\rightarrow \ldots \rightarrow (i_{S},j_{S}) := (i^-,j^-)
$$
of some length $S$, where we choose the shortest such chain. As a crude estimate, we have, say, $S \leq 2\ell$, and $(i_s,j_s)\neq(i_{s'},j_{s'})$ for any distinct $s,s' \in [S]$ (or we could find a shorter chain).

We will exchange vertices between successive clusters according to this chain.
For each $s \in [S]$, there are by definition at least $(1-\sqrt{\eps}) m$ vertices $v \in Y_{i_s,j_s}$ such that $v \rightarrow Y_{i_{s+1},j_{s+1}}$ is valid.
The number of these vertices which additionally lie in $U^k_{i_s,j_s}$ is by $(\mathscr{U}2)$,~$\mathscr{A}_3(0)$ and~(\ref{Keq}) at least
$
(1-\sqrt{\eps})m - 2k-r\eps m  >  m/2$.
So we can find $x_s \in U^k_{i_s,j_s}$ such that $x_s \rightarrow Y_{i_{s+1},j_{s+1}}$ is valid.
For each $(i,j)\in[\ell]\times[2r]$, set
$$
U^{k+1}_{i,j} = \begin{cases} U^k_{i,j}\setminus \lbrace x_1\rbrace &\mbox{if } (i,j)=(i_1,j_1) \\
U^k_{i,j} \cup \lbrace x_{s-1}\rbrace \setminus \lbrace x_s\rbrace & \mbox{if } (i,j)=(i_s,j_s) \text{ for some }2 \leq s < S\\
U^k_{i,j} \cup \lbrace x_{S-1}\rbrace & \mbox{if } (i,j)=(i_S,j_S)\\
U^k_{i,j} & \mbox{otherwise.}\end{cases} 
$$
Property~$\mathscr{A}_1(k+1)$ holds by~$\mathscr{A}_1(k)$, the definition of $U^{k+1}_{i,j}$ and the fact that each pair in the chain is distinct.
Property~$\mathscr{A}_2(k)$ and the choice of $x_s$ imply that $\mathscr{A}_2(k+1)$ holds.
We have
$$
|U^{k+1}_{i,j}\bigtriangleup Y_{i,j}| \leq |U^{k+1}_{i,j}\bigtriangleup U^k_{i,j}| + |U^k_{i,j}\bigtriangleup Y_{i,j}| \stackrel{\mathscr{A}_3(k)}{\leq} 2(k+1),
$$
proving~$\mathscr{A}_3(k+1)$ (note here we are again using the fact that each pair in our chain is distinct).
Finally, observe that $||U^{k+1}_{i^\pm,j^\pm}|-n'_{i^\pm,j^\pm}| = ||U^{k}_{i^\pm,j^\pm}|-n'_{i^\pm,j^\pm}| -1$ and $|U^{k+1}_{i,j}|=|U^k_{i,j}|$ for all other pairs $(i,j)$.
Therefore
$$
\sum_{(i,j)\in[\ell]\times[2r]} ||U^{k+1}_{i,j}|-n'_{i,j}| = \sum_{(i,j)\in[\ell]\times[2r]}||U^k_{i,j}|-n'_{i,j}| - 2 \stackrel{\mathscr{A}_4(k)}{\leq} 2(r\ell\xi n - (k + 1)),
$$
proving~$\mathscr{A}_4(k+1)$.
So, for each $0 \leq k \leq K/2$, either the procedure has terminated, or we are able to proceed to step $k+1$.
Therefore there is some $p \leq K/2$ such that $\sum_{(i,j)\in[\ell]\times[2r]}||U^p_{i,j}|-n'_{i,j}|=0$.
Note that, by $\mathscr{A}_3(p)$, we have
$$
|U^p_{i,j} \bigtriangleup U_{i,j}| \leq 2p \leq K \stackrel{(\ref{Keq})}{\leq} \eps m
$$
for all $(i,j)\in[\ell]\times[2r]$.
Thus setting $W_{i,j} := U^p_{i,j}$ yields the required partition.
\end{claimproof}

\medskip
\noindent
Apply Claim~\ref{Uclaim} to obtain $\lbrace U_{i,j} : (i,j)\in[\ell]\times[2r]\rbrace$ satisfying~$(\mathscr{U}1)$--$(\mathscr{U}3)$.

Let $\phi := \phi^{2r}_\ell$ as in Definition~\ref{bijdef}. 
Let
$$
U'_{a,b} := U_{\phi^{-1}(a,b)}\quad\text{and}\quad m_{a,b} := |U'_{a,b}|\quad\text{for all}\quad(a,b) \in [2\ell]\times[r].
$$
We claim that $\lbrace m_{a,b}\rbrace$ satisfies~$(\mathscr{L}1)$.
Indeed,~$(\mathscr{U}1)$ implies that $|m_{a,b}-m_{a,b'}|\leq 1$ for all $a \in [2\ell]$ and $b,b'\in[r]$, and further, writing $\phi^{-1}(a,b) =: (i,j)$,
$$
m_{a,b} = |U_{i,j}| \stackrel{(\mathscr{U}2)}{\geq} |Y_{i,j}|-r\eps m \stackrel{(\ref{Aij})}{\geq} |V_{i,j}| - (r+1)\eps m = (1-(r+1)\eps)m \geq (1-\sqrt{\eps})m.
$$
Finally,
$$
\sum_{(a,b)\in[2\ell]\times[r]} m_{a,b} = \sum_{(i,j)\in[\ell]\times[2r]}|U_{i,j}| = n-|Y_0| = n-\sum_{(a,b)\in[2\ell]\times[r]}\tau_{a,b},
$$
so~$(\mathscr{L}1)$ holds.

Now let $\lbrace n_{a,b} \in \mathbb N: (a,b)\in[2\ell]\times[r]\rbrace$ satisfy $\sum_{(a,b)\in[2\ell]\times[r]}(n_{a,b}+\tau_{a,b}) =n$ and $|m_{a,b}- n_{a,b}| \leq\xi n$.
Let
\begin{equation}\label{neq}
n_{i,j}' := n_{\phi(i,j)}\quad\text{for all }(i,j)\in[\ell]\times[2r].
\end{equation}
Apply Claim~\ref{finalclaim} with input partition $\lbrace Y_0\rbrace \cup \lbrace U_{i,j} : (i,j)\in[\ell]\times[2r]\rbrace$ and input sizes $\lbrace n'_{i,j} : (i,j)\in[\ell]\times[2r] \rbrace$ to obtain a partition $\lbrace Y_0 \rbrace \cup \lbrace W_{i,j} : (i,j)\in[\ell]\times[2r]\rbrace$ satisfying $(\mathscr{W}1)$--$(\mathscr{W}3)$. Let
\begin{equation}\label{Weq}
X_{a,b} := W_{\phi^{-1}(a,b)} \cup A_{\phi^{-1}(a,b)} \quad\text{for all }(a,b)\in[2\ell]\times[r].
\end{equation}
We claim that $\mathcal{X} := \lbrace X_{a,b} : (a,b)\in[2\ell]\times[r]\rbrace$ is the required partition for~$(\mathscr{L}2)$.
For all $(a,b)\in[2\ell]\times[r]$ we have
$$
|X_{a,b}| \stackrel{(\ref{Weq})}{=} |W_{\phi^{-1}(a,b)}|+|A_{\phi^{-1}(a,b)}| \stackrel{(\ref{Aij}),(\mathscr{W}1)}{=} n'_{\phi^{-1}(a,b)}+\tau_{a,b} \stackrel{(\ref{neq})}{=} n_{a,b}+\tau_{a,b},
$$
as required. Also, writing $(i,j):= \phi^{-1}(a,b) \in [\ell]\times[2r]$ and recalling that $A_{i,j}\subseteq V_{i,j}$, we have
\begin{eqnarray}
\label{Xtriangle}|X_{a,b} \bigtriangleup V_{\phi^{-1}(a,b)}| &\stackrel{(\ref{Weq})}{=}& |W_{i,j}\bigtriangleup V_{i,j}|
\leq |W_{i,j}\bigtriangleup U_{i,j}| + |U_{i,j}\bigtriangleup V_{i,j}|\\
\nonumber &\stackrel{(\mathscr{U}2),(\mathscr{W}2)}{\leq}& 2 r\eps m \leq 3r\eps|X_{a,b}| \leq \sqrt{\eps}m.
\end{eqnarray}

Lastly, we need to check that $\mathcal{X}$
induces a $(\phi(R),2\ell,r,\mathcal{X},\eps^{1/3},\delta/2)$-cycle structure.
That is, we need to check that $(\mathscr{C}1)$--$(\mathscr{C}3)$ hold.
Property~$(\mathscr{W}1)$ implies that $\mathcal{X} = \lbrace X_{a,b} : (a,b)\in[2\ell]\times[r]\rbrace = \lbrace W_{i,j}\cup A_{i,j} : (i,j)\in[\ell]\times[2r]\rbrace$ is a partition of $V(G)$.
So $(\mathscr{C}1)$ holds.
Now, by~(\ref{CZ}) and Definition~\ref{bijdef}, we see that $\phi(R)$ has vertex set $[2\ell]\times[r]$ and, since $Z^{2r}_\ell \subseteq R$, we have $Z^r_{2\ell} \subseteq \phi(R)$ (with the correct labelling).
Let $(a,b),(a',b')\in E(\phi(R))$ and write $(i,j):=\phi^{-1}(a,b)$ and $(i',j')=\phi^{-1}(a',b')$.
Then $(i,j)(i',j')\in E(R)$, so $G[V_{i,j},V_{i',j'}]$ is $(\eps,\delta)$-regular by~$(\mathscr{C}2)$ for $\mathcal{V}$.
Then~(\ref{Xtriangle}) implies that we can apply Proposition~\ref{newsuperslice} with $\alpha := 3r\eps$ and $\eps ' := \eps^{1/3} \geq \eps + 6\sqrt{\alpha}$ to see that $G[X_{a,b},X_{a',b'}]$ is $(\eps^{1/3},\delta/2)$-regular.
So~$(\mathscr{C}2)$ holds.

For~($\mathscr{C}3)$, fix $a \in [2\ell]$ and let $b,b' \in [r]$ be distinct.
Let $(i,j):=\phi^{-1}(a,b)$.
Definition~\ref{bijdef} implies that there exists $j'$ such that $(j,j')\in[[2r]]^2$ and $\phi^{-1}(a,b')=(i,j')$.
Let $x \in X_{a,b}\setminus V_{\phi^{-1}(a,b)}  = W_{i,j}\setminus Y_{i,j}$.
Then~$(\mathscr{W}3)$ implies that $x \rightarrow Y_{i,j}$ is valid.
Since $Y_{i,j}\subseteq V_{i,j}$, this means that $d_G(x,V_{i,j^*}) \geq (\delta-2\eps)m$ for all $j^*$ such that $(j,j^*)\in[[2r]]^2$.
So $d_G(x,V_{\phi^{-1}(a,b')}) \geq (\delta-2\eps)m$, and hence  (\ref{Xtriangle}) implies $d_G(x,X_{a,b'}) \geq (\delta-2\eps)m - 2r\eps m \geq \delta|X_{a,b'}|/2$.
Similarly, every $y \in X_{a,b'} \setminus V_{\phi^{-1}(a,b')}$ satisfies $d_G(y,X_{a,b}) \geq \delta|X_{a,b}|/2$.
Moreover,~($\mathscr{C}3)$ for $\mathcal V$  and (\ref{Xtriangle}) implies that $d_G(x,X_{a,b'})  \geq \delta|X_{a,b'}|/2$ for every $x \in V_{\phi^{-1}(a,b)}$ and  $d_G(y,X_{a,b}) \geq \delta|X_{a,b}|/2$ for every $y \in V_{\phi^{-1}(a,b')}$.
So Proposition~\ref{newsuperslice} applied with $\alpha := 3r\eps$ and $\eps' := \eps^{1/3}$ implies that $G[X_{a,b},X_{a,b'}]$ is $(\eps^{1/3},\delta/2)$-superregular.
So~$(\mathscr{C}3)$ holds.
This completes the proof of~$(\mathscr{L}2)$ and hence of the lemma.
\end{proof}

\section{The proof of Theorem~\ref{main}}\label{sec8}
First note that it suffices to prove the theorem under the additional assumption that $\eta \ll d,1/\Delta $.
Let $n_0,\beta,\rho,\eps,c,\delta,\rho ', L'>0$ satisfy
\begin{equation}\label{hierarchy}
0 < 1/n_0 \ll \beta \ll 1/L'\ll \rho \ll \eps \ll c \ll \delta \ll \rho '  \ll \eta \ll d,1/\Delta.
\end{equation}
\COMMENT{AT: delete all less than $1/3$ as we want to be dealing with $r=2$ too!}

Let $G$ be a $(\rho,d)$-dense graph on $n \geq n_0$ vertices with $\delta(G) \geq (1/2+\eta)n$.
Let $H$ be a graph on $n$ vertices with  $\Delta(H) \leq \Delta$ and bandwidth at most $\beta n$.
Write $r:=\chi (H)$; so as $\eta \ll 1/\Delta$, certainly $\eta \ll 1/r$.

Apply  the Regularity lemma (Lemma~\ref{reg}) with parameters $\eps,(4r+1)L'$ to obtain $L^* \in \mathbb{N}$.
We may assume that $\beta \ll 1/L^*$.

\COMMENT{AT: go back and check that when I talk about $r^*$, $4r$ etc earlier on, we use the correct term}
\begin{claim}\label{claim1}
There exists $L' \leq \ell \leq L^*$, a partition $\mathcal{V} = \lbrace V_0\rbrace \cup \lbrace V_{i,j} : (i,j) \in [\ell]\times[4r]\rbrace$ of $V(G)$ with $|V_{i,j}|=:m$ for all $(i,j)\in[\ell]\times[4r]$, a graph $R$ on vertex set $[\ell]\times[4r]$ and a spanning subgraph $G'$ of $G$ such that
\begin{itemize}
\item[(i)] $R$ is $(\rho ',d)$-dense;
\item[(ii)] $\delta(R) \geq (1/2+\eta/3)|R|$;
\item[(iii)] $G'$ has an $(R,\ell,4r,\mathcal{V},7\eps^{1/4},\delta/2)$-cycle structure $\mathcal{C}$ and $|V_0| \leq 2\eps^{1/2}n$;
\item[(iv)] $R[\lbrace(1,1),\ldots,(1,4r)\rbrace] \cong K_{4r}$ and $\lbrace (1,1),\ldots,(1,4r)\rbrace$ lies in a copy of $K_{324r/\eta ^2}$ in $R$.
\end{itemize}
\end{claim}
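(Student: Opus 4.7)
The plan is to apply the Regularity lemma to $G$, invoke Theorem~\ref{rcycle} on its reduced graph to find a high power of a Hamilton-like cycle (delivering both the $Z^{4r}_\ell$ skeleton and the large clique required by~(iv)), and finally shrink the clusters to boost the intra-row regularity to superregularity. First I would apply Lemma~\ref{reg} with parameters $\eps$ and $M' := (4r+1)L'$, obtaining a partition $V_0^{\mathrm{reg}}, U_1,\ldots,U_L$ of $V(G)$ with $(4r+1)L' \leq L \leq L^*$ and $|U_i|=:m_0$, a pure subgraph $G''$, and a reduced graph $R''$ on $[L]$. Since $\rho \ll \delta \ll \rho'$, Lemma~\ref{inherit} gives that $R''$ is $(\rho',d)$-dense with $\delta(R'') \geq (1/2+\eta/2)L$.

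Next I would set $r^* := \lceil 324r/\eta^2 \rceil$; the hierarchy leaves enough slack that $\rho' \ll d,\eta, 1/r^*$, so I can apply Theorem~\ref{rcycle} to $R''$ with $r^*$ and $s=4r$ in the roles of $r$ and $s$, producing the $r^*$th power of a cycle $C^*$ in $R''$ on exactly $4r\ell$ vertices, where $\ell := \lfloor L/(4r) \rfloor \geq L'$. Relabelling the vertices of $C^*$ (and their clusters) cyclically as $(1,1),\ldots,(1,4r),(2,1),\ldots,(\ell,4r)$, I take $R$ to be the subgraph of $R''$ induced on $V(C^*)$. Since $r^* \geq 8r-1$, the $r^*$th power of $C^*$ contains $C^{8r-1}_{4r\ell}$, so by~(\ref{CZ}) we have $Z^{4r}_\ell \subseteq R$. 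The first $r^*+1 \geq 324r/\eta^2$ consecutive vertices of $C^*$ form a clique, which gives~(iv). Parts~(i) and~(ii) then follow from Lemma~\ref{nbrhd}(ii) (density is essentially preserved upon removing at most $4r$ vertices) together with the crude bound $\delta(R) \geq (1/2+\eta/2)L - 4r \geq (1/2+\eta/3)|R|$, using $L \geq (4r+1)L' \gg r/\eta$.

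Finally, for the cycle structure~(iii), I would dump the vertices of the at most $4r$ discarded clusters into the exceptional set, and then for each row $i \in [\ell]$ apply Lemma~\ref{superslice} separately (with a host graph of bounded order $4r$) to the clusters $\{U_{(i,j)} : j \in [4r]\}$ with respect to $K_{4r}$, obtaining subsets $V_{i,j} \subseteq U_{(i,j)}$ of common size $m := (1-\sqrt{\eps})m_0$ such that each intra-row pair $G''[V_{i,j},V_{i,j'}]$ is $(4\sqrt{\eps},\delta/2)$-superregular. The deleted vertices of each $U_{(i,j)}$ join the exceptional set $V_0$, whose total size is $|V_0| \leq \eps n + 4rm_0 + \sqrt{\eps}n \leq 2\eps^{1/2}n$ by the hierarchy. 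For any other edge $(i,j)(i',j') \in E(R)$, Proposition~\ref{newsuperslice} applied with $\alpha = \sqrt{\eps}$ promotes the $(\eps,\delta)$-regularity of $G''[U_{(i,j)},U_{(i',j')}]$ to $(\eps+6\eps^{1/4},\delta-4\sqrt{\eps})$-regularity, and hence $(7\eps^{1/4},\delta/2)$-regularity, of $G''[V_{i,j},V_{i',j'}]$, delivering condition~$(\mathscr{C}2)$. The main obstacle I anticipate is the constant-matching: $r^*$ scales like $r/\eta^2$, so one must verify that the hierarchy~(\ref{hierarchy}) really does leave room for $\rho' \ll 1/r^*$ before Theorem~\ref{rcycle} may be invoked at that scale, together with the familiar technicality that Lemma~\ref{superslice} needs a bounded host graph, which is resolved here by treating each $K_{4r}$ independently.
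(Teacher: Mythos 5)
Your proof is correct and follows essentially the same route as the paper's: apply the Regularity lemma, invoke Lemma~\ref{inherit} for the minimum degree and local density of the reduced graph, apply Theorem~\ref{rcycle} to obtain a high power of a cycle on exactly $4r\ell$ vertices, relabel to get $R \supseteq Z^{4r}_\ell$, and then use Lemma~\ref{superslice} row-by-row (with host graph $K_{4r}$) followed by Proposition~\ref{newsuperslice} to verify the cycle structure. The only superficial deviations are that the paper applies Theorem~\ref{rcycle} with $r^*-1$ in the role of $r$ (rather than $r^*$), uses $3\delta$ (which is $\leq \rho'$) as the density parameter when invoking that theorem, and cites Lemma~\ref{nbrhd}(i) rather than (ii) for~(i); none of these alter the argument.
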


\begin{claimproof}
Apply Lemma~\ref{reg} to $G$ with parameters $\eps,\delta,(4r+1)L'$ to obtain clusters $V_1,\ldots,V_{L}$ of size $m'$, an exceptional set $V_0'$, a pure graph $G'$ and a reduced graph $R'$. So
\begin{equation}\label{L''m'}
Lm' \leq n \leq Lm' + \eps n,
\end{equation}
and $|R'|=L$ where
\begin{equation}\label{Ldef}
(4r+1)L' \leq L \leq L^*
\end{equation}
and $|V_0'| \leq \eps n$,
\begin{equation}\label{G'}
\delta(G') \geq (1/2+\eta)n - (\delta+\eps)n \geq (1/2+\eta/2)n
\end{equation}
and $G'[V_i,V_j]$ is $(\eps,\delta)$-regular whenever $ij \in E(R')$.
Lemma~\ref{inherit} implies that $R'$ is $(3\delta,d)$-dense and $\delta(R') \geq (1/2+\eta/2)L$.

Let $r^* := 324r/\eta ^2$.
Apply Theorem~\ref{rcycle} with $R',L,r^*-1,4r,3\delta,d,\eta /2$ playing the roles of $G,n,r,s,\rho,d,\eta$ to obtain 
an $(r^*-1)$-cycle $C \cong C^{r^*-1}_{4r\ell} \subseteq R'$ of order $4r \ell$ where
\begin{equation}\label{leq}
(1-\eps) L \leq L -4r \leq 4r\ell \leq L.
\end{equation}
Relabel those clusters of $R'$ corresponding to vertices of $C$
so that they are now $\lbrace V_{i,j}' : (i,j) \in [\ell]\times[4r]\rbrace$, and
\begin{equation}\label{ordering}
(1,1)(1,2)\ldots (1,4r)(2,1)\ldots (2,4r)\ldots (\ell,1)\ldots (\ell,4r) = C^{r^*-1}_{4r\ell} {\supseteq} Z^{4r}_{\ell}.
\end{equation}
\COMMENT{AT: this last inclusion doesn't follow from (1) -rather the definitions - so have removed the (1) here}
Let $R := R'[V(C)]$. So $V(R)=[\ell]\times[4r]$.
Observe that $\lbrace (1,1),\ldots,(1,4r)\rbrace$ lies in a copy of $K_{r^*}$ in $R$.
For all $i \in [\ell]$ let
\begin{equation*}\label{Ti}
T(i) := \textstyle{R\left[\bigcup_{j \in [4r]}(i,j)\right]} \stackrel{(\ref{ordering})}{\cong} K_{4r}.
\end{equation*}
Apply Lemma~\ref{superslice} with $G'[\bigcup_{j \in [4r]}V'_{i,j}],T(i),4r-1,4r,V'_{i,1},\ldots,V'_{i,4r},m',\eps,\delta$ playing the roles of $G,R, \Delta,$ $L,V_1,\ldots,V_L,m,\eps,d$ to obtain for each $j \in [4r]$ a subset $V_{i,j} \subseteq V'_{i,j}$ of size
\begin{equation}\label{m}
|V_{i,j}| = m := (1-\sqrt{\eps})m'
\end{equation}
such that for every distinct $j,j'\in[4r]$ the graph $G'[V_{i,j},V_{i,j'}]$ is $(4\sqrt{\eps},\delta/2)$-superregular.
Let $V_0 := V(G)\setminus \bigcup_{(i,j)\in[\ell]\times[4r]}V_{i,j}$ and
\begin{equation}\label{Vpart}
\mathcal{V} := \lbrace V_0 \rbrace \cup \lbrace V_{i,j} : (i,j)\in[\ell]\times[4r]\rbrace.
\end{equation}
We have
\begin{align}\label{4rell}
n &\geq 4r\ell m \stackrel{(\ref{m})}{=} 4r\ell(1-\sqrt{\eps})m' \stackrel{(\ref{leq})}{\geq} (1-\sqrt{\eps})(1-\eps)Lm' \stackrel{(\ref{L''m'})}{\geq} (1-\sqrt{\eps})(1-\eps)^2 n\\
\nonumber&\geq (1-2\eps^{1/2})n. 
\end{align}
Since we will often compare $m$ and $\beta n$ in calculations, let us note here that
\begin{equation}\label{beta}
\beta n \stackrel{(\ref{L''m'})}{\leq} \frac{\beta Lm'}{1-\eps} \stackrel{(\ref{m})}{=} \frac{\beta L m}{(1-\sqrt{\eps})(1-\eps)} \stackrel{(\ref{hierarchy}),(\ref{Ldef})}{\leq} 2\beta L^* \cdot m \leq \frac{\eps^2 m}{L^*}.
\end{equation}

We will now show that $\ell$, $R$ and $\mathcal{V}$ satisfy~Claim~\ref{claim1}(i)--(iv).
We have that
\begin{equation*}\label{eqL}
4rL' \stackrel{(\ref{hierarchy})}{\leq} (1-\eps)(4r+1)L'  \stackrel{(\ref{Ldef})}{\leq} (1-\eps)L \stackrel{(\ref{leq})}{\leq} 4r\ell \leq L \stackrel{(\ref{Ldef})}{\leq} L^*.
\end{equation*}
So $L' \leq \ell \leq L^*$, as required.
Note that (i) follows from 
Lemma~\ref{nbrhd}(i) since $\rho ' \gg \delta$.
Further, $\delta(R) \geq \delta(R')-4r \geq (1/2+\eta/3)L$ so~(ii) holds.

For~(iii), we need to show that $\mathcal{V}$ (see~(\ref{Vpart})) induces the required cycle structure $\mathcal{C}$.
That is, we need to check that $(\mathscr{C}1)$--$(\mathscr{C}3)$ hold with the desired parameters.
The sets $V_{i,j}$ are pairwise-disjoint since the same is true for $V'_{i,j}$, so by the definition of $V_0$ we have that $\mathcal{V}$ is a partition of $V(G')$. Moreover,
\begin{eqnarray*}
|V_0| = n - 4r\ell m &\stackrel{(\ref{4rell})}{\leq}& 2\eps^{1/2}n < 7\eps^{1/4}n,
\end{eqnarray*}
so~$(\mathscr{C}1)$ holds.
Certainly $V(R)=[\ell]\times[4r]$ and, by~(\ref{ordering}), $R \supseteq Z^{4r}_{\ell}$. Let $(i,j)(i',j')\in E(R)$.
Then $(i,j)(i',j')$ has a corresponding edge in $R' \supseteq R$, so $G'[V'_{i,j},V'_{i',j'}]$ is $(\eps,\delta)$-regular.
Note that $\eps + 6\sqrt{\eps^{1/2}} \leq 7\eps^{1/4}$ and $\delta-4\eps^{1/2} \geq \delta/2$.
Thus Lemma~\ref{newsuperslice} applied with $V'_{i,j},V_{i,j},V'_{i',j'},V_{i',j'},\eps^{1/2}$ playing the roles of $A,A',B,B',\alpha$ implies that $G'[V_{i,j},V_{i',j'}]$ is $(7\eps^{1/4},\delta/2)$-regular, so~$(\mathscr{C}2)$ holds.
We have already seen, for every $i \in [\ell]$ and distinct $j,j'\in[4r]$, that $G'[V_{i,j},V_{i,j'}]$ is $(4\sqrt{\eps},\delta/2)$-superregular. Thus it is $(7\eps^{1/4},\delta/2)$-superregular. So~$(\mathscr{C}3)$ holds.
Thus~(iii) holds. We saw when we defined $R$ that~(iv) holds.
This completes the proof of the claim.
\end{claimproof}

\medskip
\noindent
Recall the definition of the bijection $\phi^{4r}_{\ell} : [\ell]\times[4r]\rightarrow[2\ell]\times[2r]$ given by
$$
\phi^{4r}_{\ell}(i,j)=\left((2i-1)+\left\lfloor\frac{j}{2r}\right\rfloor,j-\left(\left\lceil \frac{j}{2r}\right\rceil-1\right) 2r\right),\quad\text{for all }(i,j)\in[\ell]\times[4r].
$$
Recall also that $\phi^{4r}_\ell(R)$ is the graph with vertex set $\phi^{4r}_\ell(V(R)) = [2\ell]\times[2r]$ and edge set $E(\phi^{4r}_\ell(R)) = \lbrace \phi^{4r}_\ell(x)\phi^{4r}_\ell(y) : xy \in E(R)\rbrace$.
For ease of notation, we will write
\begin{eqnarray}
\label{phiprop} \phi &:=& \phi^{4r}_{\ell},\quad\text{so}\quad \phi(1,b)=(1,b)\text{ for all }b \in [2r],\quad\text{and}\\
\nonumber R^* &:=& \phi(R),\quad\text{so }R^* \cong R,\quad V(R^*)=[2\ell]\times[2r],
\end{eqnarray}
and $V(G')$ has partition $\mathcal{V} = \lbrace V_0\rbrace \cup \lbrace V_{\phi^{-1}(a,b)} : (a,b)\in[2\ell]\times[2r]\rbrace$.

\begin{claim}\label{claim2}
There exists a partition $\mathcal{X} = \lbrace V_0\rbrace \cup \lbrace X_{a,b} : (a,b)\in[2\ell]\times[2r]\rbrace$ of $V(G')$ and a surjective\COMMENT{To ensure all of $V_0$ covered} mapping $\psi : V(H) \rightarrow ([2\ell]\times[2r])\cup V_0$ such that the following hold:
\begin{itemize}
\item[(i)]  $|\psi^{-1}(a,b)|=|X_{a,b}| \geq (1-\eps^{1/19})m$ for all $(a,b)\in[2\ell]\times[2r]$;
\item[(ii)] $G'$ has an $(R^*,2\ell,2r,\mathcal{X},\eps^{1/27},\delta/4)$-cycle structure $\mathcal{C}'$;
\item[(iii)] $I := \psi^{-1}(V_0) $ is an independent set in $H$ of size $|V_0|$ and for all $w \in W := \bigcup_{x \in I}N_H(x)$, there is a unique $u \in I$ such that $uw \in E(H)$, and $d_{G'}(\psi(u),X_{\psi(w)}) \geq cm/2$; 
\item[(iv)] $\psi|_{V(H)\setminus I} : V(H\setminus I) \rightarrow V(R^*)$ is a graph homomorphism;
\item[(v)] there exists $X' \subseteq V(H)\setminus I$ with $W \subseteq X'$ and $|\psi^{-1}(a,b)\cap X'| \leq \eps^{1/10}m$ for all $(a,b)\in[2\ell]\times[2r]$ such that, whenever $uv \in E(H)$ and $u,v \notin X' \cup I$, writing $\psi(u)=:(a,b)$ and $\psi(v)=:(a',b')$, we have $a=a'$ and $b \neq b'$. Moreover, writing
$$
N := \textstyle{\left(\bigcup_{x \in X'}N_H(x)\right)\setminus (X' \cup I)},
$$
we have $|N| \leq \eps m$.
\end{itemize}
\end{claim}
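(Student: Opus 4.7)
The proof combines Lemmas~\ref{lemmaforH},~\ref{lemmaforH1}, and~\ref{lemmaforG}. The Special Lemma for $H$ will handle the exceptional set $V_0$ by covering it via a $2$-independent set in a short initial segment of $H$; the Basic Lemma for $H$ will produce a homomorphism for the remainder of $H$; and the Lemma for $G$ will adjust cluster sizes so that the outputs of the two embedding lemmas are compatible.

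Fix a bandwidth ordering $x_1,\ldots,x_n$ of $H$ and a proper coloring $\chi:V(H)\to[r]$. For each $v\in V_0$ set $N_v:=\lbrace(i,j)\in V(R):d_{G'}(v,V_{i,j})\geq cm\rbrace$; using $\delta(G')\geq(1/2+\eta/2)n$ and $|V_0|\leq 2\eps^{1/2}n$ from Claim~\ref{claim1}(iii), a short averaging argument shows $|N_v|\geq\eta\cdot 4r\ell$. By Claim~\ref{claim1}(iv), the set $\lbrace b_1,\ldots,b_r\rbrace:=\lbrace(1,1),\ldots,(1,r)\rbrace\subseteq V(R)$ induces $K_r$ inside a copy of $K_{18r/\eta^2}$ in $R$. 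Apply Lemma~\ref{lemmaforH} to obtain an integer $s\leq\eps^{1/4}n$ and a map $f:V(H_1)\to V(R)\cup V_0$ satisfying $(\mathscr{D}1)$--$(\mathscr{D}5)$, where $H_1:=H[\lbrace x_1,\ldots,x_{s+\beta n}\rbrace]$, $X:=\lbrace x_1,\ldots,x_s\rbrace$ and $Y:=\lbrace x_{s+1},\ldots,x_{s+\beta n}\rbrace$. Then $I:=f^{-1}(V_0)\subseteq X$ is $2$-independent in $H$ with $|I|=|V_0|$, and $f(y)=(1,\chi(y))$ for all $y\in Y$.

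Recall the bijection $\phi=\phi^{4r}_\ell$ and $R^*=\phi(R)$ from~(\ref{phiprop}). For $(a,b)\in[2\ell]\times[2r]$, define $\tau_{a,b}:=|\lbrace x\in X\setminus I:\phi(f(x))=(a,b)\rbrace|$; by $(\mathscr{D}3)$ each is small compared to $m$. Apply Lemma~\ref{lemmaforG} to $G'[V(G')\setminus V_0]$ (with the spanning cycle structure inherited from Claim~\ref{claim1}(iii)) and offsets $\tau_{a,b}$, to obtain ideal sizes $\lbrace m_{a,b}\rbrace$ satisfying $(\mathscr{L}1)$. Now set $H_2:=H[\lbrace x_{s+1},\ldots,x_n\rbrace]$ and apply Lemma~\ref{lemmaforH1} to $H_2$ with $2\ell,2r,\lbrace m_{a,b}\rbrace$ (trivially adjusted so their sum is $|H_2|=n-s$) in the roles of $\ell,r,\lbrace m_{i,j}\rbrace$. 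This produces a map $f_2:V(H_2)\to[2\ell]\times[2r]$ and a set $B_2$ with $|B_2|\leq 4\ell\beta n$ satisfying $(\mathscr{B}1)$--$(\mathscr{B}4)$. By~(\ref{CZ}) and $R\supseteq Z^{4r}_\ell$ the image of $f_2$ lies in $Z^{2r}_{2\ell}\subseteq R^*$; furthermore $(\mathscr{B}4)$ gives $f_2(y)=(1,\chi(y))=\phi(f(y))$ for all $y\in Y$, so the two embedding lemmas are compatible on the gluing set $Y$. Finally, set $n_{a,b}:=|f_2^{-1}(a,b)|$ (which is $m_{a,b}\pm O(\beta n)$ by $(\mathscr{B}2)$) and invoke Lemma~\ref{lemmaforG}$(\mathscr{L}2)$ to obtain the partition $\mathcal{X}=\lbrace X_{a,b}\rbrace$ of $V(G')\setminus V_0$ with $|X_{a,b}|=n_{a,b}+\tau_{a,b}$ inducing the desired cycle structure.

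Define $\psi:V(H)\to V(R^*)\cup V_0$ by $\psi(x):=f(x)\in V_0$ for $x\in I$, $\psi(x):=\phi(f(x))$ for $x\in X\setminus I$, and $\psi(x):=f_2(x)$ for $x\in V(H_2)$; these cases partition $V(H)$, so $\psi$ is well-defined. Set $X':=(X\setminus I)\cup B_2$. Property (i) follows from $|X_{a,b}|=\tau_{a,b}+n_{a,b}$ and the lower bound in $(\mathscr{L}1)$; (ii) is the cycle structure output of $(\mathscr{L}2)$ together with the exceptional set $V_0$; (iii) combines $(\mathscr{D}1),(\mathscr{D}2)$ with the bound $|X_{a,b}\triangle V_{\phi^{-1}(a,b)}|\leq O(\eps^{1/8})m$ from $(\mathscr{L}2)$ to transfer the high-degree property from $V_{f(w)}$ to $X_{\psi(w)}$ at a small cost; (iv) follows from $(\mathscr{D}4)$ applied via $\phi$ and $f_2$ being a homomorphism into $R^*$, with the edges from $X\setminus I$ to $Y$ handled by the compatibility identity $\phi\circ f=f_2$ on $Y$; and (v) follows from $(\mathscr{B}3)$ applied to edges of $H_2\setminus B_2$, with the per-cluster bound $|X'\cap\psi^{-1}(a,b)|\leq\tau_{a,b}+|B_2|\leq\eps^{1/10}m$ and $|N|\leq|Y|+\Delta|B_2|\leq\eps m$ from the size bounds above. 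The main technical obstacle is the careful coordination of the $\eps$-parameters across the three invoked lemmas so that their hierarchies mesh consistently with the target parameters of (i)-(v); in particular, the $\eps$-parameter of Lemma~\ref{lemmaforH} must be chosen so that $|V_0|\leq 2\eps^{1/2}n$ is absorbed yet the resulting bound on $\tau_{a,b}$ is compatible with both the input constraint of Lemma~\ref{lemmaforG} and the $\eps^{1/10}m$ bound of~(v); once arranged, every verification is direct.
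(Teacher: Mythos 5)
Your proposal is correct and takes essentially the same route as the paper's own proof: apply the Special Lemma for $H$ to cover $V_0$ via a $2$-independent set in the initial segment, feed the resulting $\tau_{a,b}$ offsets into the Lemma for $G$, apply the Basic Lemma for $H$ to the tail $H''$, and glue the two homomorphisms on $Y$ using $(\mathscr{D}5)$ and $(\mathscr{B}4)$ together with the identity $\phi(1,b)=(1,b)$. The only differences are cosmetic -- you apply Lemma~\ref{lemmaforH} with $R$ and post-compose with $\phi$ rather than applying it directly with $R^*$, and you slightly mis-state the clique-size constant as $18r/\eta^2$ (since the lemma is invoked with a scaled-down $\eta$, the true requirement is larger), but the $K_{324r/\eta^2}$ supplied by Claim~\ref{claim1}(iv) absorbs this in either accounting.
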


\begin{claimproof}
For all $v \in V(G')$, write
\begin{equation}\label{Nalpha}
N_{R^*}^c(v) := \lbrace (a,b)\in[2\ell]\times[2r] : d_{G'}(v,V_{\phi^{-1}(a,b)})\geq c m\rbrace
\end{equation}
and $d_{R^*}^c(v) := |N_{R^*}^c(v)|$. Then
$$
(1/2+\eta) n - (\delta+\eps)n \stackrel{(\ref{G'})}{\leq} d_{G'}(v) \leq d_{R^*}^c(v)m + (4r\ell-d_{R^*}^c(v))c m + |V_0|.
$$
Claim~\ref{claim1}(iii)  implies that
\begin{equation*}\label{meq2}
4r\ell m \leq n \leq 4r\ell m + |V_0| \leq 4r\ell m+ 2\eps^{1/2} n.
\end{equation*}
Thus
\begin{equation}\label{dalpha}
d_{R^*}^c(v) \geq \frac{(1/2+\eta-\delta-\eps) n - 4r\ell cm - |V_0|}{(1-c)m} \stackrel{(\ref{hierarchy})}{\geq} \frac{1/2+ \eta/2}{1-c}\cdot 4r\ell \geq \frac{|R^*|}{2}.
\end{equation}

We would like to apply Lemma~\ref{lemmaforH} (Special Lemma for $H$) to obtain an integer $s$, with $G',R^*,4r\ell,$ $2r,\eta/3,2\eps^{1/2},\rho ',d,n,m,N_{R^*}^c(v),(1,i)$ playing the roles of $G,R,L,r,\eta,\eps,\rho,d,n,m,N_v,b_i$.
For this, we need to check that~$(\mathscr{G}1)$--$(\mathscr{G}4)$ hold.
For~$(\mathscr{G}1)$, we know that $G'$ has vertex partition $\lbrace V_0 \rbrace \cup \lbrace V_{i,j} : (i,j)\in[\ell]\times[4r]\rbrace = \lbrace V_0\rbrace\cup \lbrace V_{\phi^{-1}(a,b)} : (a,b)\in[2\ell]\times[2r]\rbrace$, and $|V_0| \leq \eps^{1/2}n$ and $|V_p|=m$ for all $p \in V(R^*)$.
That~$(\mathscr{G}2)$ holds follows from~(\ref{dalpha}).
Property~$(\mathscr{G}3)$ follows from~Claim~\ref{claim1}(i) and~(ii) and the fact that $R^* \cong R$.
Finally,~$(\mathscr{G}4)$ follows from~(iv), noting that $324r/\eta ^2= 18 \cdot (2r) \cdot 1/(\eta/3)^2$, and the fact that $\phi(1,b)=(1,b)$ for all $b \in [2r]$ from~(\ref{phiprop}).
Therefore we can apply Lemma~\ref{lemmaforH} with the above parameters to obtain an integer
\begin{equation}\label{seq}
s \leq (2\eps^{1/2})^{1/4}n \leq \eps^{1/9}n.
\end{equation}

Let $\chi : V(H)\rightarrow[r]$ be a proper colouring of $H$, let $x_1,\ldots,x_n$ be an ordering of $V(H)$ with bandwidth at most $\beta n$, and
let
\begin{align}
\label{XYZdef} X &:= \lbrace x_1,\ldots,x_{s}\rbrace,\quad Y := \lbrace x_{s+1},\ldots,x_{s+\beta n}\rbrace \subseteq Z := \lbrace x_{s+ 1},\ldots,x_n\rbrace,\\
\label{H'def} H' &:= H[X \cup Y]\quad\text{and}\quad H'' := H[Z].
\end{align}
Apply Lemma~\ref{lemmaforH} (Special Lemma for $H$) with the above parameters and with $s,\beta,\Delta,H',X,Y,\chi$ playing the roles of $s,\beta,\Delta,H,X,Y,\chi$ to obtain a mapping
\begin{equation*}\label{ffinaldef}
f:X \cup Y\rightarrow ([2\ell]\times[2r]) \cup V_0
\end{equation*}
with the following properties:
\begin{itemize}
\item[$(\mathscr{D}1)$]  
setting $I := f^{-1}(V_0)$, we have that $I$ is a subset of $X$ which is $2$-independent in $H'$, and each  vertex in $V_0$ is mapped onto from a unique vertex in $H$ (so $|I|=|V_0|$);
\item[$(\mathscr{D}2)$] for all $v \in V_0$, setting $W_v := N_H(f^{-1}(v))$, we have $W_v \subseteq X$ and $f(W_v) \subseteq N_{R^*}^c(v)$;
\item[$(\mathscr{D}3)$] $|f^{-1}(a,b)| \leq \eps^{1/9}m$ for every $(a,b) \in [2\ell]\times[2r]$;
\item[$(\mathscr{D}4)$] for every edge $uv\in E(H)$ such that $f(u),f(v) \notin V_0$, we have $f(u)f(v) \in E(R^*)$;
\item[$(\mathscr{D}5)$] for all $y \in Y$ we have $f(y)=(1,\chi(y))$.
\end{itemize}

Let
\begin{equation}\label{taudef}
\tau_{a,b} := |(f|_{X\setminus I})^{-1}(a,b)| = |(f|_X)^{-1}(a,b)| \quad\text{for all }(a,b)\in[2\ell]\times[2r].
\end{equation}
Then $0 \leq \tau_{a,b} \leq \eps^{1/9}m$ for all $(a,b)\in[2\ell]\times[2r]$ by~$(\mathscr{D}3)$. 

\smallskip

Apply Lemma~\ref{lemmaforG} (the Lemma for $G$) with $n-|V_0|,\ell,m,2r,11\beta,\eps^{1/9},\delta/2,G'\setminus V_0,R,\mathcal{V}\setminus \lbrace V_0\rbrace$ playing the roles of $n,\ell,m,r,\xi,\eps,\delta,G,R,\mathcal{V}$ to obtain
positive integers $\lbrace m_{a,b} : (a,b)\in[2\ell]\times[2r]\rbrace$ such that
\begin{itemize}
\item[$(\mathscr{L}1)$] $\sum_{(a,b)\in[2\ell]\times[2r]}(m_{a,b}+\tau_{a,b})=n-|V_0|$ and $m_{a,b} \geq (1-\eps^{1/18})m$ and $|m_{a,b}-m_{a,b'}| \leq 1$ for all $a \in [2\ell]$ and $b,b' \in [2r]$;
\item[$(\mathscr{L}2)$] given any $\lbrace n_{a,b} : (a,b) \in [2\ell]\times[2r]\rbrace$ with $\sum_{(a,b)\in[2\ell]\times[2r]}(n_{a,b}+\tau_{a,b})=n-|V_0|$ and $|m_{a,b}- n_{a,b}| \leq 11\beta (n-|V_0|)$, there is a partition $\mathcal{X} = \lbrace V_0 \rbrace \cup \lbrace X_{a,b} : (a,b)\in[2\ell]\times[2r] \rbrace$ of $V(G')$ with $|X_{a,b}| = n_{a,b}+\tau_{a,b}$ and $|X_{a,b}\bigtriangleup V_{\phi^{-1}(a,b)}| \leq \eps^{1/18}m$ for all $(a,b)\in[2\ell]\times[2r]$ such that
$G'$ has an $(R^*,2\ell,2r,\mathcal{X},\eps^{1/27},\delta/4)$-cycle structure.
\end{itemize}
Note that Lemma~\ref{lemmaforG} yields a partition of $G'\setminus V_0$ into clusters, and the partition of $V(G')$ specified in~$(\mathscr{L}2)$ is simply this partition together with $V_0$.

\smallskip

The next step is to apply Lemma~\ref{lemmaforH1} (Basic Lemma for $H$) to $H'' = H[Z]$ (which overlaps with $H'$ in $Y$). Note that the number of vertices in $H''$ is $n-s \geq (1-\eps^{1/9})n$. Further,
\begin{equation}\label{mab}
\sum_{(a,b)\in[2\ell]\times[2r]}m_{a,b} \stackrel{(\mathscr{L}1)}{=} n - |V_0| - \sum_{(a,b)\in[2\ell]\times[2r]}\tau_{a,b} \stackrel{(\ref{taudef})}{=} n- |V_0| - |X\setminus I| \stackrel{(\mathscr{D}1)}{=} n-|X| \stackrel{(\ref{XYZdef})}{=}|Z|
\end{equation}
and $m_{a,b} \geq (1-\eps^{1/18})m \geq 10\beta(n-s)$ by~(\ref{hierarchy}).
Thus we can apply Lemma~\ref{lemmaforH1} with $n-s,r,2\ell,\Delta,\beta,H'',(x_{s+1},\ldots,x_n),\chi,\lbrace m_{a,b} : (a,b)\in[2\ell]\times[2r]\rbrace$ playing the roles of
$
n,r,\ell,\Delta,\beta,H,$ $(x_1,\ldots,x_n),\chi,\lbrace m_{a,b}:(a,b)\in[2\ell]\times[2r]\rbrace
$
to obtain a mapping
\begin{equation}\label{kdef}
k : Z \rightarrow [2\ell]\times[2r]
\end{equation}
and $B \subseteq Z$ with the following properties: 
\begin{itemize}
\item[$(\mathscr{B}1)$] $B \subseteq Z\setminus Y$ and $|B| \leq 2\ell\beta n$;
\item[$(\mathscr{B}2)$] $\left| |k^{-1}(a,b)| - m_{a,b}\right| \leq 10\beta n$ for every $(a,b) \in [2\ell]\times[2r]$;
\item[$(\mathscr{B}3)$] for every edge $uv\in E(H'')$, writing $k(u)=:(a,b)$ and $k(v)=:(a',b')$, we have $|a-a'|\leq 1$ and $b \neq b'$. If additionally $u,v \notin B$, then $a=a'$;
\item[$(\mathscr{B}4)$] for all $y \in Y$ we have $k(y)=(1,\chi(y))$.
\end{itemize}
Let
\begin{equation}\label{nabdef}
n_{a,b} := |k^{-1}(a,b)|\quad\text{for all }(a,b)\in[2\ell]\times[2r].
\end{equation}
Then
$$
\sum_{(a,b)\in[2\ell]\times[2r]}n_{a,b} \stackrel{(\ref{kdef}),(\ref{nabdef})}{=} |Z| \stackrel{(\ref{mab})}{=} \sum_{(a,b)\in[2\ell]\times[2r]}m_{a,b} \stackrel{(\mathscr{L}2)}{=} n - |V_0| - \sum_{(a,b)\in[2\ell]\times[2r]}\tau_{a,b}
$$
and for all $(a,b)\in[2\ell]\times[2r]$,
$$
|n_{a,b}-m_{a,b}|\stackrel{(\ref{nabdef})}{=}||k^{-1}(a,b)|-m_{a,b}| \stackrel{(\mathscr{B}2)}{\leq} 10\beta n \leq 11\beta(n-|V_0|).
$$
Thus~$(\mathscr{L}2)$ implies that there is a partition $\mathcal{X} = \lbrace V_0 \rbrace \cup \lbrace X_{a,b} : (a,b)\in[2\ell]\times[2r] \rbrace$ of $V(G')$ with, for all $(a,b)\in[2\ell]\times[2r]$,
\begin{equation}\label{Xprops}
|X_{a,b}| = |k^{-1}(a,b)|+|(f|_{X\setminus I})^{-1}(a,b)|\quad\text{and}\quad|X_{a,b}\bigtriangleup V_{\phi^{-1}(a,b)}| \leq \eps^{1/18}m
\end{equation}
such that
$G'$ has an $(R^*,2\ell,2r,\mathcal{X},\eps^{1/27},\delta/4)$-cycle structure.

Define a mapping $\psi : V(H) \rightarrow ([2\ell]\times[2r]) \cup V_0$ by
setting
\begin{equation}\label{psidef99}
\psi(x) = \begin{cases} f(x) &\mbox{if } x\in X \\
k(x) & \mbox{if } x \in Z.\end{cases} 
\end{equation}
Finally, let $X' := (X\setminus I) \cup B$.

We need to check that $\mathcal{X}$, $\psi$ and $X'$ satisfy Claim~\ref{claim2}(i)--(v).
For~(i), we have
\begin{eqnarray*}
|\psi^{-1}(a,b)| &\stackrel{(\ref{psidef99})}{=}& |k^{-1}(a,b)|+|(f|_{X\setminus I})^{-1}(a,b)| \stackrel{(\ref{Xprops})}{=} |X_{a,b}|  \stackrel{(\mathscr{B}2)}{\geq} m_{a,b}-10\beta n\\
&\stackrel{(\mathscr{L}1)}{\geq}& (1-\eps^{1/18})m-10\beta n \stackrel{(\ref{beta})}{\geq} (1-\eps^{1/19})m.
\end{eqnarray*}
Further, we have already seen that~(ii) holds.

Note that $I =f^{-1}(V_0)=\psi^{-1}(V_0)$ has size $|V_0|$ and is a $2$-independent subset of $X$ in $H'$ by~$(\mathscr{D}1)$. Let $w \in W := \bigcup_{x \in I}N_H(x)$.
Since $I$ is $2$-independent, there is a unique $u \in I \subseteq X$ such that $uw \in E(H)$. So $w \in W_{f(u)}=W_{\psi(u)} \subseteq X$ in the notation of~$(\mathscr{D}2)$.
So $\psi(w) = f(w) \in N_{R^*}^c(f(u))=N_{R^*}^c(\psi(u))$.
Thus
$$
d_{G'}(\psi(u),X_{\psi(w)}) \stackrel{(\ref{Xprops})}{\geq} d_{G'}(\psi(u),V_{\phi^{-1}(\psi(w))}) - \eps^{1/18}m \stackrel{(\ref{Nalpha})}{\geq} cm/2,
$$
so~(iii) holds.

For~(iv), note that $k(y)=(1,\chi(y))  = f(y)=\psi(y)$ for all $y \in Y$ by~$(\mathscr{D}5)$ and~$(\mathscr{B}4)$.
Observe that $\psi' := \psi|_{V(H)\setminus I}$ is a map into $V(R^*)=[2\ell]\times[2r]$. Let $xy \in E(H)$ where $x,y \notin I$.
Suppose first that $x,y \in X \cup Y$. Then $\psi(x)=f(x)$ and $\psi(y)=f(y)$.
Then $f(x),f(y) \notin V_0$, so~$(\mathscr{D}4)$ implies that $f(x)f(y) \in E(R^*)$.
Suppose now that $x,y \in Z$.
Write $\psi(x)=k(x)=(a,b)$ and $\psi(y)=k(y)=(a',b')$ where $(a,b),(a',b')\in[2\ell]\times[2r]$.
Then~$(\mathscr{B}3)$ implies that $|a-a'| \leq 1$ and $b \neq b'$. Thus $\psi(x)\psi(y) \in E(Z^{2r}_{2\ell}) \subseteq E(R^*)$, as required.
The only other possibility is that one of $x,y$ is in $X$ and the other is in $Z\setminus Y$.
But then the distance between them in the bandwidth ordering of $H$ is more than $|Y|=\beta n$, a contradiction to $xy \in E(H)$.
Thus $\psi' : V(H\setminus I) \rightarrow V(R^*)$ is a graph homomorphism.
So~(iv) holds.

For~(v), 
note that $B \subseteq Z$ so $X' \cap I = \emptyset$, and $W = \bigcup_{v \in V_0}W_v \subseteq X$, and $W \cap I = \emptyset$ since $I$ is $2$-independent in $H'$. So $W \subseteq X'$.
Now
let $(a,b)\in[2\ell]\times[2r]$. We have
$$
|\psi^{-1}(a,b) \cap X'| \leq |\psi^{-1}(a,b) \cap X|+|B| \leq |f^{-1}(a,b)| + |B| \stackrel{(\mathscr{D}3),(\mathscr{B}1)}{\leq} \eps^{1/9}m + 2\ell\beta n \stackrel{(\ref{beta})}{\leq} \eps^{1/10}m.
$$
Now let $uv \in E(H)$ where $u,v \notin X' \cup I $. So $u,v \in Z \setminus B$. Write $\psi(u)=(a,b)$ and $\psi(v)=(a',b')$.
Then $(a,b)=k(u)$ and $(a',b')=k(v)$, and~$(\mathscr{B}3)$ implies that $a=a'$ and $b \neq b'$, as required.

Finally, define $N$ as in~(v).
If $y \in N$, then either $y \in \bigcup_{x \in X}N_H(x)\setminus X \subseteq Y$; or $y \in \bigcup_{x \in B}N_H(x)$ (or both).
So~(\ref{XYZdef}) and the fact that $\Delta(H) \leq \Delta$ implies that
\begin{equation}\label{N}
|N| \leq |Y|+\Delta|B| \leq (2\Delta\ell+1)\beta n \stackrel{(\ref{hierarchy}),(\ref{beta})}{\leq} \eps m.
\end{equation}
This completes the proof of~(v) and hence of the claim.
\end{claimproof}

\medskip
\noindent
In the final part of the proof, we will use the cycle structure $\mathcal{C}'$, mapping $\psi$ and special set $X'$ obtained in Claim~\ref{claim2} to find an embedding $g$ of $H$ into $G'\subseteq G$. We will do this in three stages: (1) first define an embedding $g_1$ of $I$ into $V_0$, according to $\psi$; (2) find an embedding $g_2$ of $X'$ using $\psi$ as a framework, such that there are large candidate sets for the neighbouring vertices $N$ of $X'$; (3) find an embedding $g_3$ of the remainder of $H$ using the Blow-up lemma, using the candidate sets obtained in (2) to ensure that $g_2$ is compatible with $g_3$.
Then set $g$ to be the union of $g_1,g_2,g_3$.

Stage (1) is easy; we simply define
$$
g_1 : I \rightarrow V_0\quad\text{where}\quad g_1(x) := \psi(x)\quad\text{for all }x \in I.
$$
Since by Claim~\ref{claim2}(iii), $I$ is an independent set in $H$ of size $V_0$, we trivially have that $g_1$ is an embedding of $H[I]$ into $V(G')$.

For Stage~(2), we will apply Lemma~\ref{target} (Embedding lemma with target sets) to embed vertices in $X'$.
Indeed, let $\psi^* := \psi|_{X' \cup N}$.
Given $w \in W$, let $u$ be the unique element of $I$ such that $uw \in E(H)$, as guaranteed by Claim~\ref{claim2}(iii). Let
\begin{equation}\label{Sw99}
S_w := N_{G'}(\psi(u),X_{\psi(w)}).
\end{equation}
We will apply Lemma~\ref{target} with
$
G'\setminus V_0,R^*,H[X' \cup N],n-|V_0|,4r\ell,\eps^{1/27},c/2,\delta/4,\Delta,$ $ \lbrace X_{a,b}\rbrace,(1-\eps ^{19})m,\psi^*,X',$ $N,W,S_w
$
playing the roles of
$G,R,H,
n,L,\eps,c,\delta,\Delta,\lbrace V_a : a \in V(R)\rbrace,m,\phi,X,Y,W,S_w
$.
To see why this is possible, note that, by Claim~\ref{claim1}(iii),
$G'\setminus V_0$ has $n-|V_0| \geq (1-2\eps^{1/2})n$ vertices and Claim~\ref{claim2}(ii) (specifically~$(\mathscr{C}2)$) implies that it has an $(\eps^{1/27},\delta/4)$-regular partition $\lbrace X_{a,b} : (a,b)\in V(R^*)\rbrace$.
Clearly, as a restriction of $\psi$, the function $\psi^*$ is a suitable graph homomorphism, and by Claim~\ref{claim2}(v) and~(\ref{N}), we have
\begin{equation}\label{psistar}
|(\psi^*)^{-1}(a,b)| \leq |\psi^{-1}(a,b) \cap X'| + |N| \leq \eps^{1/10}m + \eps m \leq \eps^{1/12}m.
\end{equation}
Finally, $W \subseteq X'$ by Claim~\ref{claim2}(v), and $|S_w| \geq cm/2$ by Claim~\ref{claim2}(iii).
So the above are suitable parameters for the application of Lemma~\ref{target}.

Thus there is a mapping
$$
g_2 : X' \rightarrow V(G')\setminus V_0
$$
which is an embedding of $H[X']$ into $G'$ such that
\begin{itemize}
\item[$(\mathscr{T}1)$] $g_2(x) \in X_{\psi^*(x)}$ for all $x \in X'$;
\item[$(\mathscr{T}2)$] $g_2(w) \in S_w$ for all $w \in W$;
\item[$(\mathscr{T}3)$] for all $y \in N$ there exists $C_y \subseteq X_{\psi^*(y)}\setminus g_2(X')$ such that $C_y \subseteq N_{G'}(g_2(x))$ for all $x \in N_H(y) \cap (X')$, and $|C_y| \geq cm/2$.
\end{itemize}

For Stage~(3), we will do the following for each $a \in [2\ell]$.
Let $U_{a,b} := X_{a,b} \setminus g_2(X')$ for all $b \in [2r]$.
We want to show that $U_{a,b}$ has exactly the right size to embed the remaining vertices of $H$ whose image under $\psi$ is $(a,b)$.
Indeed, let $\psi' := \psi|_{H\setminus (X' \cup I)}$.
Then Claim~\ref{claim2}(i) implies that
$$
|U_{a,b}| = |X_{a,b}| - |g_2(X') \cap X_{a,b}| \stackrel{(\mathscr{T}1)}{=} |\psi^{-1}(a,b)| - |(\psi^*)^{-1}(a,b) \cap X'| = |(\psi')^{-1}(a,b)|
$$
where we used the fact that $\psi^{-1}(a,b) \cap I=\emptyset$.
This together with~(\ref{psistar}) implies that
$|U_{a,b}\bigtriangleup X_{a,b}| = |(\psi^*)^{-1}(a,b) \cap X'| \leq \eps^{1/10}m \leq 2\eps^{1/10}|U_{a,b}|$.
Let $b,b' \in [2r]$ be distinct.
So $|U_{a,b}| \geq (1-\eps^{1/20})m$ by Claim~\ref{claim2}(i).
Recall from Claim~\ref{claim2}(ii) (specifically~$(\mathscr{C}3)$) that $G'[X_{a,b},X_{a,b'}]$ is $(\eps^{1/27},\delta/4)$-superregular.
So given any $x \in U_{a,b}$, Claim~\ref{claim2}(i) implies that 
$$
d_{G'}(x,U_{a,b'}) \geq \delta|X_{a,b}|/4-\eps^{1/10}m \geq (\delta/4-\delta\eps^{1/19}-\eps^{1/10})m \geq \delta|U_{a,b}|/5.
$$
Thus Proposition~\ref{newsuperslice} with $G',X_{a,b},U_{a,b},X_{a,b'},U_{a,b'},\eps^{1/27},\delta/4,\eps^{1/10}$ playing the roles of $G,A,A',B,B',$ $\eps,\delta,\alpha$ implies that
 $G'[U_{a,b},U_{a,b'}]$ is $(2\eps^{1/27},\delta/5)$-superregular for all distinct $b,b'\in[2r]$.
The set $N$ has size at most $\eps m \leq 2\eps|U_{a,b}|$ for any $b \in [2r]$, and for each $y \in N \cap (\psi')^{-1}(a,b)$,~$(\mathscr{T}3)$ guarantees a corresponding set $C_y \subseteq X_{\psi^*(y)}\setminus g_2(X') = U_{\psi'(y)} = U_{a,b}$ which has size at least $cm/2 \geq c|U_{a,b}|/3$.
Let $H_a$ denote the subgraph of $H$ induced by the set of all $x \in V(H) \setminus (X' \cup I)$ such that $\psi '(x)=(a,b)$ for some $b \in [2r]$. \COMMENT{AT: note all the changes here}
Now apply, for each $a \in [2\ell]$, Lemma~\ref{blowlem} (Blow-up Lemma) with 
$G'[\bigcup_{b \in [2r]}U_{a,b}]$ and $H_a$ playing the roles of $G$ and $H$ and
$
2\eps^{1/27},2\eps,\delta/5,c/3,\Delta,2r,\lbrace U_{a,b}: b \in [2r]\rbrace,\psi',C_y
$
playing the roles of
$
\eps,\alpha,\delta,c,\Delta,k,\lbrace V_a:a \in [k]\rbrace,\phi,S_y.
$
Altogether this yields a mapping
$$
g_3 : V(H)\setminus (X' \cup I) \rightarrow V(G')\setminus (V_0 \cup g_2(X'))
$$
which is an embedding of $H\setminus(X' \cup I)$ into $V(G')$ such that every $y \in N$ is mapped to a vertex in $C_y$.

We claim that the mapping $g$ given by
\begin{equation}\label{psidef}
g(x) = \begin{cases} g_1(x) &\mbox{if } x\in I \\
g_2(x) &\mbox{if } x\in X' \\
g_3(x) & \mbox{otherwise}.\end{cases} 
\end{equation}
is an embedding of $H$ into $G'$ (and hence into $G$).

Firstly, $g$ is an injective map from $V(H)$ to $V(G')$ by the definitions of $g_1,g_2,g_3$.
So we just need to check that it is a graph homomorphism.
Also by their definitions, each of $g_1,g_2,g_3$ is an embedding of $H$ induced on their respective domains into $G'$.
So it suffices to check that whenever $xy \in E(H)$ and $x,y$ are not both in $I$ or in $X'$ or in $V(H)\setminus (X' \cup I)$, that $g(x)g(y)\in E(G')$.

Suppose first that $x \in I$ and $y \in V(H)\setminus I$. Then $g(x)=g_1(x)=\psi(x)$ and $y \in W \subseteq X'$ (here we used Claim~\ref{claim2}(v)). So $g(y)=g_2(y)$.
Claim~\ref{claim2}(iii) implies that $x$ is the only vertex in $I$ which is a neighbour of $y$.
Then
$$
g(y)\stackrel{(\ref{psidef})}{=}g_2(y) \stackrel{(\mathscr{T}2)}{\in} S_y \stackrel{(\ref{Sw99})}{=} N_{G'}(\psi(x),X_{\psi(y)}) = N_{G'}(g(x),X_{\psi(y)}).
$$
So $g(x)g(y)\in E(G')$, as required.

Therefore we may assume that $x \in X'$ and $y \in V(H)\setminus (X' \cup I)$.
Then $g(x)=g_2(x)$, $y \in N$ and $g(y)=g_3(y) \in C_y$, where $C_y$ was defined in~$(\mathscr{T}3)$, which guarantees that $C_y \subseteq N_{G'}(g_2(x)) = N_{G'}(g(x))$.
So $g(x)g(y)\in E(G')$, as required.
This completes the proof of Theorem~\ref{main}.


\section{Concluding remarks}\label{sec9}
In this paper we  prove a version of the Bandwidth theorem for locally dense graphs. As mentioned in the introduction, it is also of interest to seek minimum degree conditions that force a given spanning structure in a graph with
sublinear independence number. In particular, it would be very interesting to obtain an analogue of the Bandwidth theorem in this setting.

In a step in this direction,
Balogh, Molla and  Sharifzadeh~\cite{bms} proved the following result on triangle factors.
\begin{theorem}[Balogh, Molla and  Sharifzadeh~\cite{bms}]\label{bmsthm}
For every $\eps >0$, there exists $\gamma > 0$ and $n_0 \in \mathbb N$ such that the following holds.
For every $n$-vertex graph $G$ with $n \geq n_0$ divisible by $3$, if $\delta (G) \geq (1/2 + \eps)n$ and $G$ has independence number $\alpha(G) \leq \gamma n$, then $G$ has
a $K_3$-factor.
\end{theorem}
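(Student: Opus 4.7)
The plan is to prove Theorem~\ref{bmsthm} via the absorbing method of R\"odl, Ruci\'nski and Szemer\'edi. The aim is to build an \emph{absorbing set} $A \subseteq V(G)$ with $|A| = o(n)$, $3 \mid |A|$, and the ``flexibility'' property that for every $L \subseteq V(G) \setminus A$ with $|L| \le \beta n$ and $3 \mid |L|$, the induced subgraph $G[A \cup L]$ admits a $K_3$-factor. Given such an $A$, I would apply the Regularity Lemma to $G \setminus A$ to produce a $K_3$-tiling covering all but at most $\beta n$ vertices; the uncovered remainder is then absorbed via $A$ to complete a $K_3$-factor of $G$.

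To construct $A$, I would first observe that the minimum-degree hypothesis forces every edge $xy$ of $G$ to satisfy $|N(x) \cap N(y)| \ge 2\varepsilon n$, so every vertex lies in $\Omega(n^2)$ triangles. For each $v$, define a \emph{$v$-absorber} to be a constant-sized set $X_v$ equipped with two $K_3$-tilings: one tiling $X_v$ itself, and a second tiling $(X_v \cup \{v\}) \setminus \{u\}$ for some $u \in X_v$, where the two tilings differ along a short ``swap'' structure. Using both hypotheses I would count $\Omega(n^{|X_v|})$ absorbers per vertex: the small independence number $\alpha(G) \le \gamma n$ is crucial here, ruling out the degenerate situation in which many pairs of vertices lack a neighbourhood rich enough to form the auxiliary triangles of a swap. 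A standard probabilistic selection (random sample with Chernoff plus a union bound, or the Montgomery template method) then yields $A$ of size $o(n)$ containing $\Omega(\beta n)$ pairwise disjoint absorbers for every vertex, and a greedy argument absorbs any small leftover $L$ vertex by vertex.

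For the almost-perfect $K_3$-tiling of $G' := G \setminus A$, I would apply the Regularity Lemma to obtain a reduced graph $R$ with $\delta(R) \ge (1/2 + \varepsilon/2)|R|$. The Blow-up Lemma then reduces the problem to finding a near-spanning $K_3$-tiling in $R$. Since $\delta(R) \ge |R|/2$ sits well below the Hajnal--Szemer\'edi threshold $2|R|/3$, no classical theorem applies. Instead, I would use that $\alpha(G) \le \gamma n$ forbids $R$ from being close to the extremal structure: a nearly balanced complete bipartite reduced graph would lift to an independent set of size $\sim n/2$ in $G$, violating the hypothesis. This gives a stability-type dichotomy: if $R$ is close to extremal, handle it directly by a tailored extremal analysis; otherwise, find a fractional $K_3$-tiling of $R$ of total weight close to $|R|/3$ and convert it to an integer near-perfect $K_3$-tiling via a cluster-swapping argument.

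The main obstacle is this near-perfect tiling step. The minimum-degree hypothesis is a constant factor short of the Hajnal--Szemer\'edi threshold, and the only additional input is $\alpha(G) \le \gamma n$, which transfers only awkwardly to $R$ because cluster-level independence is much coarser than vertex-level independence. I expect most of the technical effort to sit in proving the stability statement for $R$ and implementing the rounding step, while the absorbing construction and the final combination should follow fairly standard lines.
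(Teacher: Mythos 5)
This theorem is not proved in the paper under review: it appears in the concluding remarks as a quoted result of Balogh, Molla and Sharifzadeh~\cite{bms}, used only to motivate a conjecture. There is therefore no internal proof to compare your proposal against.

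Assessed on its own merits, the absorbing-method outline is a sensible starting point, but the near-perfect tiling step contains a genuine error. You gate the argument on a stability dichotomy for the reduced graph $R$, and justify excluding the extremal branch by claiming that a near-balanced complete bipartite $R$ ``would lift to an independent set of size $\sim n/2$ in $G$.'' This fails on two counts. First, $\delta(R)\ge(1/2+\eps/2)|R|$ already precludes $R$ from being close to bipartite --- each vertex of a bipartite graph has degree at most the size of the opposite side, so both sides would have to exceed $|R|/2$ --- so the dichotomy is gated on a case that cannot occur. Second, and more fundamentally, a non-edge of $R$ records only that the corresponding pair of clusters fails $\eps$-regularity or has density below $\delta$; it does not make the union of those clusters sparse, let alone independent, in $G$. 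For instance, take $V(G)=A\cup B\cup C$ with $|A|=|B|=n/4$, $|C|=n/2$, all bipartite pairs complete, $G[C]$ random of density $1/2$, and $G[A]$, $G[B]$ random of density just below $\delta$: then $\delta(G)\ge(1/2+\eps)n$ and $\alpha(G)=O(\log n)$, yet the reduced graph contains an independent set occupying a positive fraction of its vertices (the $A$-clusters). So $\alpha(G)\le\gamma n$ gives essentially no control over $\alpha(R)$, and the route you sketch cannot close the gap you yourself flag in your final paragraph. A correct proof must exploit the independence hypothesis on $G$ directly (for instance through a weighted or fractional tiling argument at the level of $G$), not through structural stability of $R$; and the extremal configuration it has to exclude is an independent set of size slightly above $n/3$ in $G$ (the Corr\'adi--Hajnal construction), not a bipartite reduced graph.
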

Perhaps the next natural step is to ascertain whether the conclusion of Theorem~\ref{bmsthm} can be strengthened to ensure the square of a Hamilton cycle.
\begin{conjecture}
For every $\eps >0$, there exists $\gamma > 0$ and $n_0 \in \mathbb N$ such that the following holds.
For every $n$-vertex graph $G$ with $n \geq n_0$, if $\delta (G) \geq (1/2 + \eps)n$ and  $\alpha(G) \leq \gamma n$, then $G$ 
contains the square of a Hamilton cycle.
\end{conjecture}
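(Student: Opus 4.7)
The plan is to adapt the Connecting--Absorbing strategy used in the proof of Theorem~\ref{rcycle} for $r=2$, substituting Ramsey--Tur\'an-type tools and Theorem~\ref{bmsthm} for the locally dense machinery. Explicitly: (i) build a short absorbing $2$-path $P_{abs}$ whose vertex set, together with any sufficiently small leftover set $Z$, still spans a $2$-path with the same endpoints; (ii) reserve a small set $V' \subseteq V(G)$ to serve as a pot of connector vertices; (iii) apply the Regularity lemma to $G \setminus (V(P_{abs}) \cup V')$, find an almost-spanning $K_3$-tiling in the reduced graph, and use the Blow-up lemma on each $K_3$ to obtain a constant number of vertex-disjoint long $2$-paths covering almost all remaining vertices; (iv) splice $P_{abs}$ and these $2$-paths into a single $2$-cycle via vertices from $V'$ using a Connecting lemma; (v) absorb the few uncovered vertices into $P_{abs}$ to close up to the square of a Hamilton cycle.

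The principal adaptation required is replacing the locally dense counting in Lemma~\ref{nbrhd}(iv) (which yields many extendable $K_r$'s in any large subset) by a Ramsey--Tur\'an analogue: if $\delta(G) \geq (1/2+\eta)n$ and $\alpha(G) \leq \gamma n$ with $\gamma \ll \eta$, then for every vertex $v$ the graph $G[N(v)]$ still has large minimum degree and sublinear independence number, so by iteration and the bound $R(3,k) = O(k^2)$ one locates triangles inside common neighbourhoods of small cliques. Substituting this into the proofs of Lemma~\ref{ingredient} and Lemma~\ref{connect} yields a Connecting lemma in the sublinear independence regime, and a first-moment-plus-alteration argument mirroring the absorber construction in the proof of Theorem~\ref{rcycle} produces the absorbing $2$-path $P_{abs}$. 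The reduced graph $R$ inherits $\delta(R) \geq (1/2+\eta/2)|R|$ by (the minimum-degree half of) Lemma~\ref{inherit}, which does not use local density, and $\alpha(R) \leq 2\gamma|R|$ because a large independent set in $R$ lifts to an independent transversal in $G$ via one vertex per cluster; so after deleting at most two vertices to achieve $3 \mid |R|$, Theorem~\ref{bmsthm} provides a $K_3$-factor in $R$.

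The main obstacle is engineering the Connecting lemma so that the connector triangle is not merely present but is also $\eta n$-\emph{extendable}. In the locally dense world, extendability falls out of Lemma~\ref{nbrhd}(iv) for free; here one must carry extendability through the iterative Ramsey argument, tracking that at each iteration the current common neighbourhood $U$ retains $\delta(G[U]) \geq \eta|U|/2$ and $\alpha(G[U]) \leq \gamma n \ll |U|$. This is delicate because passing to the common neighbourhood of a clique can blow up the independence number relative to the new host size, so one needs a carefully chosen hierarchy $\gamma \ll \eta$ and a bounded number of iterations (at most three for triangles with large extendability). A secondary obstacle is that Theorem~\ref{bmsthm} is a black box: to make the Blow-up step work one also needs superregularity inside each $K_3$ of the tiling, which means invoking a slicing lemma analogous to Lemma~\ref{superslice} on top of Theorem~\ref{bmsthm}. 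Once these two points are handled, the rest of the argument transcribes the proof of Theorem~\ref{rcycle} almost verbatim.
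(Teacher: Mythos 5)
This statement is a \emph{conjecture} in the paper, stated as an open problem in Section~\ref{sec9}. The paper provides no proof, so there is no ``paper's own proof'' to compare against. Your task was therefore to prove an open conjecture, and I must evaluate whether the proposal would actually close the problem. It would not: there are several genuine gaps.

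First, the Ramsey-counting you invoke fails at the relevant scale. You claim that using $R(3,k) = O(k^2)$ one can locate triangles inside common neighbourhoods, but the set $U^*$ in the adapted Lemma~\ref{ingredient} has size only $\Theta(n)$, while $\alpha(G[U^*]) \leq \gamma n$ for a fixed constant $\gamma$. Ramsey gives a triangle in $U^*$ only if $|U^*| \geq R(3, \gamma n + 1) = \Theta((\gamma n)^2/\log(\gamma n))$, which is $\gg n$ for any fixed $\gamma > 0$. Unlike the locally dense setting, an arbitrary constant-fraction subset of a graph with constant-fraction independence number carries no edge-density guarantee, so one cannot conclude it contains a triangle. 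Lemma~\ref{nbrhd}(iv) exploits local density precisely to escape this; nothing replaces it in your argument.

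Second, and more fundamentally, the extendability of cliques does not transfer. The absorber in Claim~\ref{block} (for $r = 2$) requires a large supply of $K_4$'s that are $d_1 n$-\emph{extendable}, and the Connecting lemma (Lemma~\ref{connect}) requires the cliques it joins to be extendable or to sit in large cliques. With only $\delta(G) \geq (1/2+\eta)n$, the common neighbourhood of four vertices is at least $(4\eta-1)n$, which is negative for $\eta < 1/4$; so a typical $K_4$ has empty common neighbourhood, and there is no a priori reason the graph contains any $d_1 n$-extendable $K_4$ at all. You flag this as ``the main obstacle'' and suggest ``carefully choosing $\gamma \ll \eta$'', but no hierarchy of constants removes it: iterating common neighbourhoods beyond the second level destroys the minimum-degree guarantee (for $U_2 = N(v_1, v_2)$, one gets $\delta(G[U_2]) \geq (3\eta - 1/2)n$, negative for small $\eta$), so your claimed invariant $\delta(G[U]) \geq \eta|U|/2$ cannot be maintained. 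This is exactly why the paper's argument for Theorem~\ref{rcycle} leans so heavily on local density and why the Conjecture is open.

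Third, the reduced graph does \emph{not} inherit small independence number. You assert $\alpha(R) \leq 2\gamma|R|$ because ``a large independent set in $R$ lifts to an independent transversal in $G$'', but this is false: if $I \subseteq V(R)$ is independent, then $G'[\bigcup_{i \in I}V_i]$ is edgeless (in the pure graph $G'$), yet in $G$ itself each vertex of $\bigcup_{i \in I}V_i$ may have up to $(\delta + \eps)n$ neighbours inside it. The resulting independent set in $G$ has size only about $|I|m/((\delta+\eps)n) \approx |I|/((\delta+\eps)L)$, which can be far below $\gamma n$ even when $|I|$ is a constant fraction of $L$. Consequently you cannot invoke Theorem~\ref{bmsthm} on $R$ to extract a $K_3$-factor, and the Blow-up step collapses.

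In short, the Connecting--Absorbing blueprint is a reasonable first instinct, but the proposal substitutes Ramsey-Tur\'an-flavoured heuristics for the two places where local density does irreplaceable work (supplying extendable cliques, and giving the reduced graph the same structural property as $G$), and neither substitution is valid.
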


It is also natural to seek a version of Theorem~\ref{main} where now one replaces the condition of locally dense with a more restrictive \emph{uniformly dense} condition:
given $\rho,d >0$, we say that an $n$-vertex graph $G$ is \emph{$(\rho,d)$-uniformly-dense} 
if every $X ,Y\subseteq V(G)$  satisfies $e_G(X,Y) \geq d|X||Y|-\rho n^2$.
If one restricts to uniformly dense graphs, then one can substantially reduce the minimum degree condition in Theorem~\ref{main}, as well as remove the bandwidth condition on $H$.
\begin{theorem}\label{udthm}
For all $\Delta \in \mathbb{N}$ and $d,\eta>0$, there exist constants $\rho,n_0>0$ such that for every $n \geq n_0$, the following holds.
Let $H$ be an $n$-vertex graph  with $\Delta(H) \leq \Delta$.
Then any $(\rho,d)$-uniformly-dense graph $G$ on $n$ vertices with $\delta(G)\geq \eta n$ contains a copy of $H$.
\end{theorem}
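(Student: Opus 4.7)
The plan is to embed $H$ into $G$ using the Regularity lemma and the Blow-up lemma, exploiting the uniform density to obtain an essentially complete reduced graph and using an equitable $(\Delta+1)$-colouring of $H$ provided by the Hajnal--Szemer\'edi theorem. The absence of a bandwidth condition on $H$ makes the required structure significantly simpler than in the proof of Theorem~\ref{main}.

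Set $r:=\Delta+1$ and let $0 < 1/n_0 \ll 1/M' \ll \eps \ll \eta, d, 1/\Delta$. First I apply the Regularity lemma (Lemma~\ref{reg}) to $G$ with parameters $\eps, \delta:=d/4, M'$ to obtain clusters $V_0, V_1, \ldots, V_L$ of common size $m$, pure graph $G'$ and reduced graph $R$. Since $\rho \ll 1/L^2$, the uniform density condition forces every cluster pair $(V_i,V_j)$ to satisfy $e_G(V_i,V_j) \geq dm^2 - \rho n^2 \geq (d/2)m^2$, so each pair has density in $G$ greater than $\delta$ and hence belongs to $R$ if and only if it is $\eps$-regular. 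Since the regularity lemma leaves at most $\eps L^2$ non-regular pairs, the reduced graph $R$ equals $K_L$ minus at most $\eps L^2$ edges.

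Next, I partition $V(G)$ uniformly at random into $r$ equally-sized parts $W_1,\ldots,W_r$. A Chernoff bound applied to the minimum degree condition $\delta(G)\geq \eta n$ yields that, with probability $1-o(1)$, every vertex $v \in V(G)$ satisfies $d_G(v,W_j) \geq \eta n/(2r) \geq (\eta/2)|W_j|$ for all $j\in [r]$; the union bound over the $nr$ events closes because $\exp(-\Omega(\eta n/r))$ decays exponentially in $n$. Moreover, each cluster $V_k$ intersects each $W_i$ in approximately $m/r$ vertices, and a standard random-subset inheritance lemma for regular pairs implies that every regular pair $(V_a,V_b)$ in $R$ restricts to an $(O(\eps r), d/2)$-regular pair $(V_a\cap W_i, V_b \cap W_j)$. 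For each $i\neq j$, the bipartite graph $G[W_i,W_j]$ is a disjoint union of these restricted pairs, and the $\leq \eps L^2$ non-regular pairs contribute at most $\eps n^2/r^2$ edges out of $\Theta(n^2/r^2)$ total; a standard averaging argument then shows that $G[W_i,W_j]$ is $(\eps^{1/3},d/4)$-regular, and combined with the Chernoff degree bound is in fact $(\eps^{1/3},\min(d/8,\eta/(3r)))$-superregular.

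Finally, I apply the Hajnal--Szemer\'edi theorem to find an equitable proper $r$-colouring of $H$ with colour classes $C_1,\ldots,C_r$ of sizes $\lfloor n/r\rfloor$ or $\lceil n/r\rceil$. After a minor rebalancing of the $W_i$'s (moving at most $r$ vertices between them, which preserves super-regularity via Proposition~\ref{newsuperslice}) so that $|W_i|=|C_i|$ exactly, the Blow-up lemma (Lemma~\ref{blowlem}) applied with $J:=K_r$, clusters $W_1,\ldots,W_r$, and the colouring of $H$ as the graph homomorphism $\phi$ yields the desired embedding of $H$ into $G$. The main technical point will be verifying that the random vertex-level partition simultaneously inherits regularity from the cluster structure produced by the Regularity lemma (via random-subset inheritance for regular pairs) and guarantees the super-regularity condition at every vertex (via the minimum degree of $G$); the exceptional set $V_0$ requires no special treatment since its vertices are simply distributed among the $W_i$'s and satisfy the degree bound automatically.
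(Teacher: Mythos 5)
Your high-level plan (Regularity lemma, random vertex-level partition into $\Delta+1$ parts, equitable $(\Delta+1)$-colouring of $H$, Blow-up lemma) is a natural one, but it contains a genuine gap at the step where you claim that $G[W_i,W_j]$ is $(\eps^{1/3},d/4)$-regular. The union of regular pairs of \emph{varying} densities is not a regular pair, and the uniform density hypothesis only gives a \emph{lower} bound on $e_G(X,Y)$, so it does not control the densities from above. Concretely, let $V(G)=A\cup B$ with $|A|=|B|=n/2$, put a clique on $A$, a clique on $B$, and a random bipartite graph of density $d$ between $A$ and $B$. This graph is $(\rho,d)$-uniformly-dense (one checks that $e(X,Y)\ge d|X||Y|$ up to $o(n^2)$ for all $X,Y$), its reduced graph is essentially complete, and it has linear minimum degree. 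Yet for any partition $W_1,\ldots,W_r$ in which each $W_i$ meets $A$ and $B$ in about $n/(2r)$ vertices (as a random partition does), the bipartite graph $G[W_1,W_2]$ has overall density roughly $(1+d)/2$, but the sub-pair $(W_1\cap A, W_2\cap A)$ has density $1$. Since $|W_1\cap A|\ge|W_1|/3$, say, this violates $\eps$-regularity for any small $\eps$. Your appeal to a ``standard averaging argument'' therefore fails: averaging controls the total edge count, not the near-constancy of densities that regularity requires.

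This breaks the application of the Blow-up lemma (Lemma~\ref{blowlem}), which genuinely needs $(\eps,\delta)$-\emph{super}regular (in particular $\eps$-regular) pairs between the $W_i$. What the random partition \emph{does} give you, from the uniform density of $G$, is that every pair of linear-sized subsets $X\subseteq W_i$, $Y\subseteq W_j$ satisfies $e_G(X,Y)\ge (d-o(1))|X||Y|$, together with the minimum-degree condition from the Chernoff bound. This is a one-sided (``lower-regular'') condition, and there do exist Blow-up lemmas tailored to exactly that hypothesis --- indeed this is how the more general Corollary~1.3 in~\cite{gj} is proved --- but such a lemma is not available among the tools assembled in this paper, and Lemma~\ref{blowlem} cannot stand in for it. If you wish to stay within the paper's toolkit, you need to keep the embedding at the level of the regularity clusters (where the pairs really are regular) rather than passing to a coarse random $(\Delta+1)$-partition, which destroys regularity.

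Two smaller points worth flagging. First, your hierarchy never introduces $\rho$; the theorem asks you to produce $\rho$ from $\Delta,d,\eta$, and you need $\rho$ to be chosen after the Regularity lemma bound $M=M(\eps,M')$ so that $\rho n^2\le (d/2)(n/M)^2$, which is fine but should be stated. Second, the degree form of the Regularity lemma as stated (Lemma~\ref{reg}) does not by itself assert ``at most $\eps L^2$ irregular pairs''; that is a property of the standard Szemer\'edi partition one derives the degree form from, so a word of justification is needed if you rely on it.
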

Theorem~\ref{udthm} can be proven by a  simple application of the Blow-up lemma; a more general `rainbow' version of Theorem~\ref{udthm} is given in~\cite[Corollary 1.3]{gj}.
\section*{Acknowledgements}
We would like to thank Maryam Sharifzadeh for many helpful conversations at the start of the project. The second author is grateful to Stefan Glock and Felix Joos for a conversation on~\cite{gj} that brought to light the
version of the Bandwidth theorem for uniformly dense graphs.

The authors are also grateful to the referees for their helpful and careful reviews.

\medskip

{\footnotesize \obeylines \parindent=0pt

\begin{tabular}{lll}
  Katherine Staden &  Andrew Treglown     \\
  Mathematical Institute	&  School of Mathematics		 \\
  University of Oxford&University of Birmingham\\
  Andrew Wiles Building       &   Edgbaston           			\\
  Radcliffe Observatory Quarter&Birmingham			  			\\
  Woodstock Road&		B15 2TT			     \\
 Oxford&UK\\
OX2 6GG&\\
 UK&
\end{tabular}
}

\begin{flushleft}
{\it{E-mail addresses}:
\tt{staden@maths.ox.ac.uk},
\tt{a.c.treglown@bham.ac.uk}.}
\end{flushleft}

\end{document}